\begin{document}

\newcommand{\ci}[1]{_{ {}_{\scriptstyle #1}}}

\newcommand{\norm}[1]{\ensuremath{\left\|#1\right\|}}
\newcommand{\abs}[1]{\ensuremath{\left\vert#1\right\vert}}
\newcommand{\ip}[2]{\ensuremath{\left\langle#1,#2\right\rangle}}
\newcommand{\p}{\ensuremath{\partial}}
\newcommand{\pr}{\mathcal{P}}

\newcommand{\pbar}{\ensuremath{\bar{\partial}}}
\newcommand{\db}{\overline\partial}
\newcommand{\D}{\mathbb{D}}
\newcommand{\B}{\mathbb{B}}
\newcommand{\Sp}{\mathbb{S}}
\newcommand{\T}{\mathbb{T}}
\newcommand{\R}{\mathbb{R}}
\newcommand{\Z}{\mathbb{Z}}
\newcommand{\C}{\mathbb{C}}
\newcommand{\N}{\mathbb{N}}
\newcommand{\scrH}{\mathcal{H}}
\newcommand{\scrL}{\mathcal{L}}
\newcommand{\td}{\widetilde\Delta}

\newcommand{\La}{\langle }
\newcommand{\Ra}{\rangle }
\newcommand{\rk}{\operatorname{rk}}
\newcommand{\card}{\operatorname{card}}
\newcommand{\ran}{\operatorname{Ran}}
\newcommand{\osc}{\operatorname{OSC}}
\newcommand{\im}{\operatorname{Im}}
\newcommand{\re}{\operatorname{Re}}
\newcommand{\tr}{\operatorname{tr}}
\newcommand{\vf}{\varphi}
\newcommand{\f}[2]{\ensuremath{\frac{#1}{#2}}}

\newcommand{\kzp}{k_z^{(p,\alpha)}}
\newcommand{\klp}{k_{\lambda_i}^{(p,\alpha)}}
\newcommand{\TTp}{\mathcal{T}_p}


\newcommand{\entrylabel}[1]{\mbox{#1}\hfill}

\newenvironment{entry}
{\begin{list}{X}%
  {\renewcommand{\makelabel}{\entrylabel}%
      \setlength{\labelwidth}{55pt}%
      \setlength{\leftmargin}{\labelwidth}
      \addtolength{\leftmargin}{\labelsep}%
   }%
}%
{\end{list}}


\numberwithin{equation}{section}

\newtheorem{thm}{Theorem}[section]
\newtheorem{lm}[thm]{Lemma}
\newtheorem{cor}[thm]{Corollary}
\newtheorem{conj}[thm]{Conjecture}
\newtheorem{prob}[thm]{Problem}
\newtheorem{prop}[thm]{Proposition}
\newtheorem*{prop*}{Proposition}

\theoremstyle{remark}
\newtheorem{rem}[thm]{Remark}
\newtheorem*{rem*}{Remark}

\hyphenation{geo-me-tric}

\title{The Essential Norm of Operators on $A^p_\alpha(\mathbb{B}_n)$}

\author[M. Mitkovski]{Mishko Mitkovski$^\dagger$}
\address{Mishko Mitkovski, School of Mathematics\\ Georgia Institute of Technology\\ 686 Cherry Street\\ Atlanta, GA USA 30332-0160}
\email{mitkovski@math.gatech.edu}
\thanks{$\dagger$ Research supported in part by National Science Foundation DMS grant \# 1101251.}

\author[D. Su\'arez]{Daniel Su\'arez$^\sharp$}
\address{Daniel Su\'{a}rez, Depto. de Matem\'{a}tica\\ FCEyN, University of Buenos Aires\\ Pab. I, Ciudad Universitaria
\\ (1428) N\'{u}\~{n}ez, Capital Federal\\ Argentina}
\email{dsuarez@dm.uba.ar}
\thanks{$\sharp$ Research supported in part by the ANPCyT grant PICT2009-0082, Argentina.}

\author[B. D. Wick]{Brett D. Wick$^\ddagger$}
\address{Brett D. Wick, School of Mathematics\\ Georgia Institute of Technology\\ 686 Cherry Street\\ Atlanta, GA USA 30332-0160}
\email{wick@math.gatech.edu}
\thanks{$\ddagger$ Research supported in part by National Science Foundation DMS grants \# 1001098 and \# 955432.}

\subjclass[2000]{32A36, 32A, 47B05, 47B35}
\keywords{Berezin Transform, Compact Operators, Bergman Space, Essential Norm, Toeplitz Algebra, Toeplitz Operator}

\begin{abstract}
In this paper we characterize the compact operators on $A^p_\alpha(\mathbb{B}_n)$ when $1<p<\infty$ and $\alpha>-1$.  The main result shows that an operator on $A^p_\alpha(\mathbb{B}_n)$ is compact if and only if it belongs to the Toeplitz algebra and its Berezin transform vanishes on the boundary of the ball.
\end{abstract}

\maketitle

\section{Introduction and Statement of Main Results}

Let $\B_n$ denote the unit ball in $\C^n$.  For $\alpha>-1$, we let
$$
dv_\alpha(z) := c_\alpha  \, (1-|z|^2)^\alpha \, dv(z),
\ \mbox{ with }\
c_\alpha := \frac{\Gamma(n+\alpha+1)}{n! \,\Gamma(\alpha+1)}.
$$
This choice of $c_\alpha$ gives that $v_\alpha\left(\B_n\right)=1$.  For $1<p<\infty$ the space $A^p_\alpha(\B_n):=A^p_\alpha$ is the collection of all holomorphic functions on $\B_n$ such that
$$
\norm{f}_{A^p_{\alpha}}^p:=\int_{\B_n}\abs{f(z)}^p\,dv_{\alpha}(z)<\infty.
$$
We will also let $L^p_\alpha(\B_n):=L^p_\alpha$ denote the standard Lebesgue space on $\B_n$ with respect to the measure $v_\alpha$.

Recall that the projection of $L^2_\alpha$ onto $A^2_\alpha$ is given by the integral operator
$$
P_\alpha(f)(z):=\int_{\B_n}\frac{f(w)}{(1-z\overline{w})^{n+1+\alpha}}\,dv_{\alpha}(w).
$$
It is well-known that this operator is bounded from $L^p_\alpha$ to $A^p_\alpha$ when $1<p<\infty$ and $\alpha>-1$.  Let $M_a$ denote the operator of multiplication by the function $a$, $M_a(f):=af$.  The Toeplitz operator with symbol $a\in L^\infty$ is then defined by
$$
T_a:=P_{\alpha} M_a.
$$
It is immediate to see that $\norm{T_a}_{\mathcal{L}(L^p_\alpha, A^p_\alpha)}\lesssim\norm{a}_{L^\infty}$.   For $1<p<\infty$, $\alpha>-1$ and for $\lambda\in\B_n$ let $k_{\lambda}^{(p,\alpha)}(z)=\frac{(1-\abs{\lambda}^2)^{\frac{n+1+\alpha}{q}}}{(1-\overline{\lambda}z)^{n+1+\alpha}}$, where as usual $q=\frac{p}{(p-1)}$.  We also let $K_\lambda(z)=\frac{1}{(1-\overline{\lambda}z)^{n+1+\alpha}}$, which is the standard reproducing kernel in the space $A^2_\alpha$.

The Berezin transform of an operator $S$ on $A^p_\alpha$ is defined by
$$
B(S)(z):=\ip{Sk_z^{(p,\alpha)}}{k_z^{(q,\alpha)}}_{A^2_\alpha}.
$$
It is easy to see that if $S$ is a bounded operator then $\sup \{ |B(S)(z)| : z\in \B_n \} \lesssim \|S\|$.
One of the interesting aspects of operator theory on the Bergman space is that the Berezin transform essentially encapsulates
all the behavior of the operator. In fact, the Berezin transform is one-to-one, so every bounded operator on $A^p_\alpha$
is determined by its Berezin transform $B(S)$.  It is also easy to see that if $S$ is compact,
then $B(S)(z)\to 0$ as $\abs{z}\to 1$. Moreover, as we will see in this paper, it is possible to obtain a characterization
of compact operators on $A^p_\alpha$ in terms of the Berezin transform.
The following papers provide additional examples of how the Berezin transform determines properties of several classes of
operators on the Bergman space of the unit ball $\B_n$, \cites{I, NZZ, LH, YS, AZ2,SZ, St}.

As motivation for our project, we highlight some of the major contributions leading to a characterization of
compactness in terms of the Berezin transform.  A major breakthrough was obtained by Axler and Zheng
for the standard Bergman space $A^2_0(\D)$, see \cite{AZ}.
They showed that if $S$ is a finite sum of finite products of Toeplitz operators,
$S$ is compact if and only if the Berezin transform vanishes as $\abs{z}\to 1$.
This was later extended by Engli{\v{s}} to the case of bounded symmetric domains in $\C^n$, see \cite{E}.
See also the proof by Raimondo, \cite{R}, in the specific case of $\B_n$.

To state the next contribution, we need a little more notation.  Let $\mathcal{T}_{p,\alpha}$ denote the Toeplitz algebra
generated by $L^\infty$ functions.  Miraculously, there is a very close relationship between membership in
$\mathcal{T}_{p,\alpha}$ and compactness since it is known that the compact operators
on $A^p_\alpha$ belong to $\mathcal{T}_{p,\alpha}$, see \cite{E92}.  When $\alpha=0$, the second author showed in \cite{Sua}
that the compact operators are precisely those that belong to the Toeplitz algebra and have a Berezin transform that vanishes
on the boundary of the unit ball.
The main theorem of this paper is a generalization of the last result to $\alpha > -1$, as stated below.
\begin{thm}
Let $1<p<\infty$ and $\alpha>-1$ and $S\in\mathcal{L}(A^p_{\alpha},A^p_{\alpha})$.  Then $S$ is compact if and only if $S\in\mathcal{T}_{p,\alpha}$ and $\lim_{|z|\rightarrow 1} B(S)(z)=0$.
\end{thm}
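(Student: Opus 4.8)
The \emph{necessity} is routine. That a compact $S$ must lie in $\mathcal{T}_{p,\alpha}$ is exactly \cite{E92}, which I would cite. For the Berezin transform, I would use that the normalized kernels $\kzp$ tend to $0$ weakly in $A^p_\alpha$ as $\abs{z}\to 1$: one has $\norm{\kzp}_{A^p_\alpha}\approx 1$ while $\kzp\to 0$ uniformly on compact subsets of $\B_n$, and a norm-bounded family converging pointwise on $\B_n$ converges weakly in $A^p_\alpha$. A compact operator maps weakly null sequences to norm null ones, so $\norm{S\kzp}_{A^p_\alpha}\to 0$ as $\abs{z}\to 1$; combining this with $\sup_z\norm{k_z^{(q,\alpha)}}_{A^q_\alpha}<\infty$ and H\"older's inequality gives $\abs{B(S)(z)}\le\norm{S\kzp}_{A^p_\alpha}\,\norm{k_z^{(q,\alpha)}}_{A^q_\alpha}\to 0$.

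The \emph{sufficiency} is the real content, and I would route it through an essential norm estimate. Writing $\norm{S}_e$ for the essential norm and $M(S):=\limsup_{\abs{z}\to 1}\norm{S\kzp}_{A^p_\alpha}$, the central claim is that $\norm{S}_e\approx M(S)$ for every $S\in\mathcal{T}_{p,\alpha}$. Granting this, the theorem follows quickly: the same H\"older estimate as above gives $\abs{B(S)(z)}\lesssim\norm{S\kzp}_{A^p_\alpha}$, so $M(S)=0$ already forces $B(S)\to 0$; conversely the whole problem reduces to showing that, \emph{for operators in the Toeplitz algebra}, a vanishing boundary Berezin transform forces $M(S)=0$, after which $\norm{S}_e\approx M(S)=0$ makes $S$ compact. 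The lower bound $\norm{S}_e\gtrsim M(S)$ needs nothing new: for any compact $Q$ the weak nullity of $\kzp$ gives $\limsup_{\abs{z}\to 1}\norm{(S-Q)\kzp}_{A^p_\alpha}=M(S)$, so $\norm{S-Q}\ge M(S)$, and an infimum over $Q$ finishes it.

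Both remaining ingredients rest on a ``uniform localization'' description of the algebra: I would show that $\mathcal{T}_{p,\alpha}$ is the norm closure of the operators whose integral kernels are concentrated near the diagonal in the Bergman metric, controlling all estimates through Rudin--Forelli bounds for $(1-\bar z w)^{-(n+1+\alpha)}$ and the $L^p_\alpha$-boundedness of $P_\alpha$. For the upper bound $\norm{S}_e\lesssim M(S)$, I approximate $S$ in norm by such a diagonal-localized $L$, truncate $L$ to a large Bergman ball to produce a compact $Q$, and bound the remaining ``far'' part $\norm{L-Q}$ by $\sup_{\abs{z}>r}\norm{L\kzp}_{A^p_\alpha}$ via a lattice decomposition of $\B_n$; letting $r\to 1$ gives $\norm{S}_e\lesssim M(S)$. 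For the implication $B(S)\to 0\Rightarrow M(S)=0$, I again replace $S$ by a nearby diagonal-localized $L$ (so that $B(L)$ is uniformly close to $B(S)$) and use that for a localized operator $\norm{L\kzp}_{A^p_\alpha}$ is controlled, up to a uniformly small error, by the size of $L$ measured through its Berezin transform over a fixed Bergman-metric neighborhood of $z$; since $B(S)$, hence $B(L)$, is small near $\partial\B_n$, this forces $M(S)=0$.

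I expect the genuine obstacle to be the upper bound $\norm{S}_e\lesssim M(S)$ in the Banach regime $p\ne 2$. The absence of orthogonality means the localization, the truncation, and every error term must be handled through the $A^p_\alpha$--$A^q_\alpha$ duality, sharp off-diagonal decay of the reproducing kernel, and atomic/lattice decompositions rather than Hilbert-space projections; proving that diagonal-localized operators are actually dense in $\mathcal{T}_{p,\alpha}$, and establishing the quantitative comparisons between an operator's kernel, its action on $\kzp$, and its Berezin transform that drive both halves of the argument, is where the substantive work lies.
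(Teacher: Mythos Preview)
Your necessity argument is fine and matches the paper in spirit (the paper spells out why compacts lie in $\mathcal{T}_{p,\alpha}$ rather than citing \cite{E92}, but this is minor).

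The sufficiency route, however, has a real gap. You stake everything on the quantity $M(S)=\varlimsup_{|z|\to 1}\norm{S\kzp}_{A^p_\alpha}$ and the two claims (A) $\norm{S}_e\lesssim M(S)$ and (B) $B(S)\to 0\Rightarrow M(S)=0$. Neither is substantiated, and both are genuinely hard for $p\ne 2$. For (A), an atomic decomposition gives $\norm{f}_{A^p_\alpha}^p\approx\sum_j|a_j|^p$, but bounding $\norm{(L-Q)f}_{A^p_\alpha}$ by $\sup_j\norm{(L-Q)k_{z_j}^{(p,\alpha)}}_{A^p_\alpha}$ requires the images $L k_{z_j}^{(p,\alpha)}$ to be almost $\ell^p$-orthogonal in $L^p_\alpha$, which you do not establish; testing on single kernels is strictly weaker than testing on finite linear combinations of nearby kernels. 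For (B), the Berezin transform is a diagonal pairing $\langle L k_w^{(p,\alpha)},k_w^{(q,\alpha)}\rangle$, while $\norm{L\kzp}_{A^p_\alpha}$ is a supremum over all $g\in A^q_\alpha$; in the Hilbert case one bridges this via $\norm{Lk_z}^2=B(L^*L)(z)$ and the algebraic structure of finite sums of products, but for $p\ne 2$ there is no such identity, and your sentence ``$\norm{L\kzp}$ is controlled by the Berezin transform over a Bergman neighborhood of $z$'' is precisely the statement that needs proof.

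The paper avoids $M(S)$ entirely. It introduces the translated operators $S_z=U_z^{(p,\alpha)}S(U_z^{(q,\alpha)})^*$, extends $z\mapsto S_z$ continuously (in SOT, for $S\in\mathcal{T}_{p,\alpha}$) to the maximal ideal space $M_{\mathcal{A}}$ of a uniform algebra, and proves two things separately: (i) $B(S)\to 0$ on $\partial\B_n$ if and only if $S_x=0$ for every $x\in M_{\mathcal{A}}\setminus\B_n$, using only the injectivity of the Berezin transform and the identity $|B(S_z)(\xi)|=|B(S)(\varphi_z(\xi))|$; and (ii) $\norm{S}_e\approx\sup_{x\in M_{\mathcal{A}}\setminus\B_n}\norm{S_x}$, via a localization theorem that approximates $ST_\mu$ by $\sum_j M_{1_{F_j}}ST_{\mu 1_{G_j}}$ and a quantity $\mathfrak{a}_S$ that tests $S$ on the \emph{full range} of $T_{\mu 1_{D(z,r)}}$ (a span of several nearby kernels), not on single $\kzp$. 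This is exactly what circumvents the $\ell^p$-orthogonality issue in your (A) and replaces your (B) by a soft injectivity argument.
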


There is a well-known similarity between results on the Bergman space $A^p_\alpha$ and the Fock space of entire functions
$\mathcal{F}^p_\alpha(\C^n)$.  These are the entire functions on $\C^n$ such that
$$
\int_{\C^n}\abs{f(z) e^{-\frac{\alpha}{2}\abs{z}^2}}^p\,dv(z)<\infty.
$$
In the recent paper \cite{BI}, Bauer and Isralowitz obtained analogous results for the compact operators on the Fock space.
In particular, they showed that an operator on the Fock space is compact if and only if it belongs to the Toeplitz algebra and the Berezin transform vanishes at infinity.


The outline of the paper is as follows.  In Section \ref{Prelim} we fix notation and state some additional facts that will
be needed throughout the paper. In Section~\ref{Approximation} we show how to approximate $S\in\mathcal{T}_{p,\alpha}$
by certain localized operators that will be crucial when computing the essential norm of $S$.
In Section~\ref{UniformAlg} we introduce a way to connect the behavior of the Berezin transform to the behavior
of these localized operators.
Finally, in Section \ref{Characterization} we merge the ingredients of the two previous sections to prove our main results.
This is accomplished by obtaining several different characterizations of the essential norm of an operator on $A^p_{\alpha}$.

Throughout this paper we use the standard notation $A\lesssim B$ to denote the existence of a constant $C$ such that $A\leq C B$.  While $A\approx B$ will mean $A\lesssim B$ and $B\lesssim A$.  The value of a constant may change from line to line, but we will frequently attempt to denote the parameters the constant depends upon.  The expression $:=$ will mean equal by definition.

\section{Preliminaries}
\label{Prelim}
We let $\overline{z}w$ denote the standard inner product in $\C^n$.  For $z\in\B_n$, $\varphi_z$ will denote the
involutive automorphism of $\B_n$ such that $\varphi_z(0)=z$.  Using this automorphism, the pseudohyperbolic and hyperbolic metrics on $\B_n$ are defined by
$$
\rho(z,w):=\abs{\varphi_z(w)}\quad\textnormal{ and }\quad \beta(z,w):=\frac{1}{2}\log\frac{1+\rho(z,w)}{1-\rho(z,w)}.
$$
Recall that these metrics are connected by $\rho=\frac{e^{2\beta}-1}{e^{2\beta}+1}=\tanh\beta$.  It is well-known that these metrics are invariant under the automorphism group of $\B_n$.  We let
$$
D(z,r):=\{w\in\B_n:\beta(z,w)\leq r\}=\{w\in\B_n: \rho(z,w)\leq s=\tanh r\},
$$
denote the hyperbolic disc centered at $z$ of radius $r$.  Recall the following well-known identity for the M\"obius maps that will be used many times in what follows:
$$
1-\abs{\varphi_z(w)}^2=\frac{(1-\abs{z}^2)(1-\abs{w}^2)}{\abs{1-\overline{z}w}^2}.
$$

For $1<p<\infty$, $-1<\alpha$, and for $\lambda\in\B_n$,
if $k_{\lambda}^{(p,\alpha)}(z)=\frac{(1-\abs{\lambda}^2)^{\frac{n+1+\alpha}{q}}}{(1-\overline{\lambda}z)^{n+1+\alpha}}$,
we have that $\norm{k_{\lambda}^{(p,\alpha)}}_{A^p_{\alpha}}\approx 1$ with implied constants depending on $p,\alpha, n$.
For a set $E\subset\B_n$, we let $1_E$ denote the indicator function of the set $E$.

The next lemma is well-known, and we omit the proof.  The interested reader can consult the book \cite{Zhu}.
\begin{lm}
\label{Growth}
For $z\in \B_n$, $s$ real and $t>-1$, let
$$
F_{s,t}(z):=\int_{\B_n}\frac{(1-\abs{w}^2)^t}{\abs{1-\overline{w}z}^s}\,dv(w).
$$
Then $F_{s,t}$ is bounded if $s<n+1+t$ and grows as $(1-\abs{z}^2)^{n+1+t-s}$ when $\abs{z}\to 1$ if $s>n+1+t$.
\end{lm}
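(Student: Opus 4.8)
The plan is to exploit the rotation invariance of the integrand and reduce the whole question to the coefficient asymptotics of an explicit power series. First I would note that the measure $(1-\abs{w}^2)^t\,dv(w)$ is invariant under the unitary group, so $F_{s,t}(z)$ depends only on $r:=\abs{z}$ and it suffices to understand $F_{s,t}$ along a single radius. Then, writing $\abs{1-\overline{w}z}^{-s}=(1-\overline{w}z)^{-s/2}(1-w\overline{z})^{-s/2}$ and using the binomial expansion $(1-\zeta)^{-s/2}=\sum_{j\ge0}\frac{\Gamma(s/2+j)}{\Gamma(s/2)\,j!}\zeta^j$ valid for $\abs{\zeta}<1$, I would expand the kernel as a double series in $\overline{w}z$ and $w\overline{z}$ and integrate term by term against $(1-\abs{w}^2)^t\,dv(w)$.

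Passing to polar coordinates $w=\eta\sigma$ with $\eta\in[0,1]$ and $\sigma\in\p\B_n$, the angular factor of the $(j,k)$ term is $\int_{\p\B_n}(\overline{\sigma}z)^j(\sigma\overline{z})^k\,d\sigma$, which vanishes unless $j=k$ by invariance under $\sigma\mapsto e^{i\theta}\sigma$, and equals $\frac{(n-1)!\,j!}{(n+j-1)!}\abs{z}^{2j}$ when $j=k$. The radial factor is a Beta-type integral $\int_0^1\eta^{2j+2n-1}(1-\eta^2)^t\,d\eta=\tfrac12\frac{\Gamma(j+n)\Gamma(t+1)}{\Gamma(j+n+t+1)}$. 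Collecting constants, I would arrive at $F_{s,t}(z)=C\sum_{j\ge0}b_j\,\abs{z}^{2j}$ with $C=C(n,s,t)>0$ and the positive coefficients $b_j=\frac{\Gamma(s/2+j)^2}{j!\,\Gamma(j+n+t+1)}$.

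The heart of the argument is then to read the boundary behavior off these coefficients. Using the standard ratio asymptotic $\Gamma(j+a)/\Gamma(j+b)\sim j^{a-b}$ as $j\to\infty$, a short computation with $b_j=\frac{\Gamma(j+s/2)^2}{\Gamma(j+1)\Gamma(j+n+t+1)}$ shows the $j\log j$ and $-j$ contributions cancel and $b_j\approx j^{\,s-n-t-2}$. When $s<n+1+t$ the exponent $s-n-t-2<-1$, so $\sum_j b_j<\infty$; since the series is increasing in $\abs{z}^2$ and converges at $\abs{z}^2=1$, $F_{s,t}$ is bounded. When $s>n+1+t$ I would compare with the binomial series $(1-x)^{-(\gamma+1)}=\sum_j\frac{\Gamma(j+\gamma+1)}{j!\,\Gamma(\gamma+1)}x^j$ for $\gamma=s-n-t-2>-1$, whose coefficients likewise behave like $j^{\gamma}$; with $x=\abs{z}^2$ this yields $F_{s,t}(z)\approx(1-\abs{z}^2)^{-(\gamma+1)}=(1-\abs{z}^2)^{n+1+t-s}$.

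I expect the main obstacle to be precisely this last, Tauberian-type step: converting the coefficientwise estimate $b_j\approx j^{\gamma}$ into a genuine two-sided asymptotic for the sum as $\abs{z}\to1$. One has to argue that only the tail of the series governs the boundary behavior and to control the upper and lower bounds simultaneously. The cleanest route is to bound $b_j$ above and below by fixed constant multiples of the explicit binomial coefficients $\frac{\Gamma(j+\gamma+1)}{j!\,\Gamma(\gamma+1)}$ and then compare the two power series directly, using monotonicity in $x$; everything preceding it is bookkeeping of $\Gamma$-factors.
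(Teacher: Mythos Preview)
Your argument is correct and is essentially the standard proof of this estimate; the paper itself omits the proof entirely, citing it as well-known and referring the reader to Zhu's book \emph{Spaces of Holomorphic Functions in the Unit Ball}, where exactly this computation appears (Theorem~1.12). Your expansion of $\abs{1-\overline{w}z}^{-s}$ via the binomial series, the orthogonality on the sphere that kills the off-diagonal terms, the Beta-integral evaluation of the radial part, and the Stirling asymptotic $b_j\approx j^{\,s-n-t-2}$ are all the ingredients Zhu uses.

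One small comment on your closing worry: there is no genuine Tauberian difficulty here. Because the Stirling-type ratio asymptotic gives honest two-sided bounds $c_1 j^\gamma\le b_j\le c_2 j^\gamma$ for all $j\ge 1$ (not merely $b_j\sim j^\gamma$), you can compare termwise with the explicit binomial coefficients of $(1-x)^{-(\gamma+1)}$ and obtain two-sided control of the sums directly, exactly as you suggest in your last sentence. The finitely many initial terms contribute only a bounded quantity, which is negligible against the divergent comparison series as $\abs{z}\to 1$. So the route you call ``cleanest'' is in fact the whole argument; no additional Abelian/Tauberian machinery is needed.
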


\subsection{Carleson Measures for \texorpdfstring{$A^p_{\alpha}\ $}{Weighted Bergman Spaces}}

Unless stated otherwise, a measure will always be a positive, finite, regular, Borel measure.  For $p\geq 1$ a measure $\mu$ on $\B_n$ is a Carleson measure for $A^p_\alpha$ if there is a constant $C_p$, independent of $f$, such that
\begin{equation}
\label{Carl_Emb}
\left(\int_{\B_n}\abs{f(z)}^p \,d\mu(z)\right)^{\frac{1}{p}}\leq C_p\left( \int_{\B_n}\abs{f(z)}^p \,dv_\alpha(z)\right)^{\frac{1}{p}}.
\end{equation}
The best constant $C_p$ such that \eqref{Carl_Emb} holds will be denoted by $\norm{\imath_p}$.

For a measure $\mu$ we  define the operator
$$
T_\mu f(z):=\int_{\B_n}\frac{f(w)}{(1-\overline{w}z)^{n+1+\alpha}}\,d\mu(w),
$$
which gives rise to an analytic function for all $f\in H^\infty$.  When $1<p<\infty$, $T_\mu$ is densely defined on $A^p_\alpha$, and $T_\mu$ is bounded from $A^p_{\alpha}\to A^p_{\alpha}$ if and only if $\mu$ is a Carleson measure for $A^p_{\alpha}$.  Notice also that if $\mu$ is absolutely continuous measure with density $a$, i.e., if $d\mu(z)=a(z)\,dv_{\alpha}(z)$ then $T_{\mu}$ is equal to the Toeplitz operator $T_{a}$.

The following well-known result provides a geometric characterization of the Carleson measures for $A^p_{\alpha}$.

\begin{lm}[Necessary and Sufficient Conditions for $A^p_{\alpha}$ Carleson Measures]
\label{CM}
Suppose that $1<p<\infty$ and $\alpha>-1$.  Let $\mu$ be a measure on $\B_n$ and $r>0$.  The following quantities are equivalent, with constants that depend on $n$, $\alpha$ and $r$:
\begin{itemize}
\item[(1)] $\norm{\mu}_{\textnormal{RKM}}:=\sup_{z\in\B_n}\int_{\B_n} \frac{(1-\abs{z}^2)^{n+1+\alpha}}{\abs{1-\overline{z} w}^{2(n+1+\alpha)}}\,d\mu(w)$;
\item[(2)] $\norm{\imath_p}^p:=\inf\left\{C: \int_{\B_n}\abs{f(z)}^p \,d\mu(z)\leq C \int_{\B_n}\abs{f(z)}^p \,dv_{\alpha}(z)\right\}$;
\item[(3)] $\norm{\mu}_{\textnormal{Geo}}=\sup_{z\in\B_n}\frac{\mu\left(D(z,r)\right)}{\left(1-\abs{z}^2\right)^{n+1+\alpha}}$;
\item[(4)] $\norm{T_\mu}_{\mathcal{L}(A^p_{\alpha}, A^p_{\alpha})}$.
\end{itemize}
\end{lm}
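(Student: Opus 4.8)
The plan is to prove that the four quantities are equivalent (up to constants depending only on $n$, $\alpha$, $r$) by running a cycle of comparisons, say
$(4)\Rightarrow(1)\Rightarrow(3)\Rightarrow(2)\Rightarrow(3)\Rightarrow(4)$,
where the pair $(2)\Rightarrow(3)$ and $(3)\Rightarrow(2)$ yields $(2)\approx(3)$. Before starting I would record the standard geometric facts on $\B_n$ from \cite{Zhu}: there is an $r$-lattice $\{z_k\}$ for which the discs $D(z_k,r)$ cover $\B_n$ with bounded overlap; for $w\in D(z,r)$ one has $1-\abs{w}^2\approx 1-\abs{z}^2$ and $\abs{1-\overline{z}w}\approx 1-\abs{z}^2$, so the reproducing kernel is essentially constant on each disc; and $v_\alpha(D(z,r))\approx(1-\abs{z}^2)^{n+1+\alpha}$.

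The two ``testing'' directions are quick. For $(4)\Rightarrow(1)$, I would compute the pairing of $T_\mu$ against the normalized kernels: by Fubini and the reproducing property, $\ip{T_\mu f}{g}_{A^2_\alpha}=\int_{\B_n}f\overline{g}\,d\mu$ on a dense set, and with $f=k_z^{(p,\alpha)}$, $g=k_z^{(q,\alpha)}$ the product $k_z^{(p,\alpha)}\overline{k_z^{(q,\alpha)}}$ collapses exactly to $(1-\abs{z}^2)^{n+1+\alpha}/\abs{1-\overline{z}w}^{2(n+1+\alpha)}$. Hence the integrand in $(1)$ equals $\ip{T_\mu k_z^{(p,\alpha)}}{k_z^{(q,\alpha)}}_{A^2_\alpha}$, which is bounded by $\norm{T_\mu}\,\norm{k_z^{(p,\alpha)}}_{A^p_\alpha}\norm{k_z^{(q,\alpha)}}_{A^q_\alpha}\approx\norm{T_\mu}$; taking the supremum over $z$ gives $(1)\lesssim(4)$. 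For $(1)\Rightarrow(3)$, I would restrict the integral in $(1)$ to $D(z,r)$, where the integrand is $\approx(1-\abs{z}^2)^{-(n+1+\alpha)}$, so that $(1-\abs{z}^2)^{-(n+1+\alpha)}\mu(D(z,r))\lesssim\norm{\mu}_{\textnormal{RKM}}$. Likewise $(2)\Rightarrow(3)$ follows by testing \eqref{Carl_Emb} on $k_z^{(p,\alpha)}$: since $\abs{k_z^{(p,\alpha)}(w)}^p\approx(1-\abs{z}^2)^{-(n+1+\alpha)}$ on $D(z,r)$ and $\norm{k_z^{(p,\alpha)}}_{A^p_\alpha}^p\approx 1$, the embedding forces $\mu(D(z,r))\lesssim\norm{\imath_p}^p(1-\abs{z}^2)^{n+1+\alpha}$.

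The substantive direction is $(3)\Rightarrow(2)$. Here I would split $\int_{\B_n}\abs{f}^p\,d\mu=\sum_k\int_{D(z_k,r)}\abs{f}^p\,d\mu$ and use the sub-mean value property of the plurisubharmonic function $\abs{f}^p$ to bound $\sup_{D(z_k,r)}\abs{f}^p\lesssim v_\alpha(D(z_k,2r))^{-1}\int_{D(z_k,2r)}\abs{f}^p\,dv_\alpha$. Combining this with $(3)$ and $v_\alpha(D(z_k,2r))\approx(1-\abs{z_k}^2)^{n+1+\alpha}$ bounds the $k$-th term by $\norm{\mu}_{\textnormal{Geo}}\int_{D(z_k,2r)}\abs{f}^p\,dv_\alpha$; summing and invoking the bounded overlap of the enlarged discs gives $(2)\lesssim(3)$. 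Finally, for $(3)\Rightarrow(4)$ I would exploit that the geometric condition $(3)$ does not depend on $p$, so the equivalence $(2)\approx(3)$ just obtained applies verbatim to the exponent $q$; thus $\mu$ is Carleson for both $A^p_\alpha$ and $A^q_\alpha$. Then from $\ip{T_\mu f}{g}_{A^2_\alpha}=\int f\overline{g}\,d\mu$, Hölder's inequality yields $\abs{\ip{T_\mu f}{g}_{A^2_\alpha}}\le\norm{f}_{L^p(\mu)}\norm{g}_{L^q(\mu)}\lesssim\norm{\mu}_{\textnormal{Geo}}\,\norm{f}_{A^p_\alpha}\norm{g}_{A^q_\alpha}$, and since $(A^p_\alpha)^*=A^q_\alpha$ under the $A^2_\alpha$-pairing this gives $\norm{T_\mu}\lesssim\norm{\mu}_{\textnormal{Geo}}$, closing the cycle.

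I expect the main obstacle to be the direction $(3)\Rightarrow(2)$: passing from the local geometric mass condition to the global embedding requires the covering lemma with controlled overlap together with a clean sub-mean value estimate for $\abs{f}^p$ over hyperbolic discs. A secondary technical point is justifying $\ip{T_\mu f}{g}_{A^2_\alpha}=\int f\overline{g}\,d\mu$ by Fubini on a dense class (e.g. $H^\infty$ or polynomials), together with the $p$-independence of the Carleson condition, which is exactly what lets the two-sided estimate for $\norm{T_\mu}$ fall out of the scalar embedding bounds for $p$ and $q$.
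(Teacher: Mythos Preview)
Your proposal is correct and follows essentially the same approach as the paper: the paper cites \cite{Zhu} for the equivalence of (1), (2), (3) (which you spell out via the standard lattice/sub-mean-value argument), and for the equivalence with (4) the paper does exactly your computation---pairing $T_\mu$ against $k_z^{(p,\alpha)}$ and $k_z^{(q,\alpha)}$ to get (4)$\Rightarrow$(1), and using $\ip{T_\mu f}{g}_{A^2_\alpha}=\int f\overline{g}\,d\mu$ on $H^\infty$ together with H\"older and the Carleson embeddings for $p$ and $q$ to get (2)$\Rightarrow$(4).
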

Observe that condition (1) and (3) are actually independent of the exponent $p=2$ and so, the equivalence with (2) is actually true for all $1<p<\infty$.

Another simple observation one should make at this point is the following.  Suppose that $\mu$ is a complex-valued measure
such that $\abs{\mu}$, the variation of the measure, is a Carleson measure.  Decompose $\mu$ into its real and imaginary parts and then use the Jordan Decomposition to write $\mu=\mu_1-\mu_2+i\mu_3-i\mu_4$ where each $\mu_j$ is a positive measure and $\abs{\mu}\approx \sum_{j=1}^{4}\abs{\mu_j}$.  Then $\abs{\mu_j}$ is a Carleson measure with $\norm{\abs{\mu}}_{\textnormal{RKM}}\approx\sum_{j=1}^{4}\norm{\mu_j}_{\textnormal{RKM}}$.  Using Lemma \ref{CM} we have that $T_\mu$ is a bounded operator on $A^p_{\alpha}$ when $\mu$ is a complex-valued measure with $\abs{\mu}$ a Carleson measure.

\begin{proof}
The equivalence between (1), (2) and (3) is well-known, \cite{Zhu}.  Finally, to prove the equivalence with (4), first suppose that (2) holds.  Then, using Fubini's Theorem we have that for $f,g\in H^\infty$
\begin{eqnarray*}
\abs{\ip{T_\mu f}{g}_{A^2_{\alpha}}} & = & \abs{\int_{\B_n} f(w)\overline{g(w)}\,d\mu(w)}\\
& \leq & \norm{f}_{L^p_{d\mu}}\norm{g}_{L^q_{d\mu}}\\
& \lesssim & \norm{\imath_p}\norm{\imath_q}\norm{f}_{A^p_{\alpha}}\norm{g}_{A^q_{\alpha}}.
\end{eqnarray*}
But, this inequality implies $T_\mu: A^p_{\alpha}\to A^p_{\alpha}$ is bounded.  Here we have identified $\left(A^p_{\alpha}\right)^*=A^q_\alpha$.  Conversely, if $T_\mu$ is bounded, observe
$$
T_{\mu}\left(k_{\lambda}^{(p,\alpha)}\right)(z)=\int_{\B_n}\frac{1}{(1-z\overline{w})^{n+1+\alpha}}\frac{(1-\abs{\lambda}^2)^{\frac{n+1+\alpha}{q}}}{(1-\overline{\lambda}w)^{n+1+\alpha}}\,d\mu(w)
$$
and in particular
$$
T_{\mu}\left(k_{\lambda}^{(p,\alpha)}\right)(\lambda)=\int_{\B_n}\frac{(1-\abs{\lambda}^2)^{\frac{n+1+\alpha}{q}}}{\abs{1-\overline{\lambda}w}^{2(n+1+\alpha)}}\,d\mu(w).
$$
This computation implies
\begin{eqnarray*}
\int_{\B_n}\frac{(1-\abs{\lambda}^2)^{n+1+\alpha}}{\abs{1-\overline{\lambda}w}^{2(n+1+\alpha)}}\,d\mu(w)=\ip{T_{\mu} k_\lambda^{(p,\alpha)}}{k_\lambda^{(q,\alpha)}}_{A^2_{\alpha}} & \leq & \norm{T_\mu}_{\mathcal{L}(A^p_{\alpha},A^p_{\alpha})}\norm{k_\lambda^{(p,\alpha)}}_{A^p_{\alpha}}\norm{k_\lambda^{(q,\alpha)}}_{A^q_{\alpha}}\\
& \approx & \norm{T_\mu}_{\mathcal{L}(A^p_{\alpha},A^p_{\alpha})}.
\end{eqnarray*}
\end{proof}

\begin{lm}
\label{CM-Cor}
Let $1<p<\infty$ and suppose that $\mu$ is an $A^p_{\alpha}$ Carleson measure.  Let $F\subset \B_n$ be a compact set, then
$$
\norm{T_{\mu1_F}f}_{A^p_{\alpha}}\lesssim \norm{T_\mu}^{\frac{1}{q}}_{\mathcal{L}(A^p_\alpha,A^p_\alpha)}\norm{1_F f}_{L^p(\mu)},
$$
where $q=\frac{p}{p-1}$.
\end{lm}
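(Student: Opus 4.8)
The plan is to estimate $\norm{T_{\mu1_F}f}_{A^p_{\alpha}}$ by duality, pairing against $A^q_{\alpha}$ and then invoking the Carleson embedding in the conjugate exponent. First I would use the identification $(A^p_{\alpha})^*=A^q_\alpha$ to write
\[
\norm{T_{\mu1_F}f}_{A^p_{\alpha}}=\sup\left\{\abs{\ip{T_{\mu1_F}f}{g}_{A^2_\alpha}} : g\in A^q_\alpha,\ \norm{g}_{A^q_\alpha}\leq 1\right\},
\]
restricting the supremum to $g$ in a dense class such as $H^\infty$ (or polynomials). For such $g$ the same Fubini computation used in the proof of Lemma~\ref{CM} gives
\[
\ip{T_{\mu1_F}f}{g}_{A^2_\alpha}=\int_{\B_n} f(w)\,1_F(w)\,\overline{g(w)}\,d\mu(w)=\int_F f\,\overline{g}\,d\mu.
\]
The compactness of $F$ guarantees that $1_F f\in L^\infty\subset L^p(\mu)$ and that $\mu 1_F$ is a compactly supported finite measure, so all the integrals converge and the interchange of order of integration is justified; in particular $T_{\mu1_F}f$ is a genuine $A^p_\alpha$ function since $\mu1_F$ inherits the Carleson property of $\mu$ via Lemma~\ref{CM}.

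Next I would apply H\"older's inequality with exponents $p$ and $q$ to the last integral,
\[
\abs{\int_F f\,\overline{g}\,d\mu}\leq\norm{1_F f}_{L^p(\mu)}\,\norm{1_F g}_{L^q(\mu)}\leq\norm{1_F f}_{L^p(\mu)}\,\norm{g}_{L^q(\mu)},
\]
and then control $\norm{g}_{L^q(\mu)}$ by the Carleson embedding for $A^q_\alpha$. This embedding is available because the quantities $\norm{\mu}_{\textnormal{RKM}}$ and $\norm{\mu}_{\textnormal{Geo}}$ in Lemma~\ref{CM} do not depend on the exponent, so the hypothesis that $\mu$ is Carleson for $A^p_\alpha$ forces it to be Carleson for $A^q_\alpha$ as well. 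Thus $\norm{g}_{L^q(\mu)}\leq\norm{\imath_q}\,\norm{g}_{A^q_\alpha}\leq\norm{\imath_q}$.

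Finally I would convert the embedding constant $\norm{\imath_q}$ back into the operator norm of $T_\mu$. Applying Lemma~\ref{CM} with exponent $q$, together with the $p$-independence of the geometric condition, yields $\norm{\imath_q}^q\approx\norm{\mu}_{\textnormal{RKM}}\approx\norm{T_\mu}_{\mathcal{L}(A^p_{\alpha},A^p_{\alpha})}$, hence $\norm{\imath_q}\lesssim\norm{T_\mu}^{1/q}_{\mathcal{L}(A^p_{\alpha},A^p_{\alpha})}$. Combining the three displays and taking the supremum over $g$ gives the asserted bound. The main point to watch is precisely this last chain of equivalences: I must use that the RKM/geometric constants are genuinely independent of the exponent (as recorded after Lemma~\ref{CM}), so that the embedding constant for the \emph{conjugate} exponent $q$ can be read off from $\norm{T_\mu}$ on $A^p_\alpha$ rather than on $A^q_\alpha$. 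The extension of the pairing identity from the dense class to all of $A^q_\alpha$ is routine once one knows $\mu1_F$ is a finite, compactly supported Carleson measure.
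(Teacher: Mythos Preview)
Your proof is correct and follows essentially the same route as the paper: dualize, apply Fubini to rewrite the pairing as $\int_F f\overline{g}\,d\mu$, use H\"older, and then invoke the Carleson embedding in exponent $q$ together with the $p$-independence of the Carleson condition to obtain the factor $\norm{T_\mu}^{1/q}$. If anything, you are more explicit than the paper about why the $q$-embedding constant can be compared to $\norm{T_\mu}_{\mathcal{L}(A^p_\alpha,A^p_\alpha)}$.
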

\begin{proof}
It is clear $T_{\mu1_F}f$ is a bounded analytic function for any $f\in A^p_{\alpha}$ since $F$ is compact and $\mu$ is a finite measure.  As in the proof of the previous lemma, we have
\begin{eqnarray*}
\abs{\ip{T_{\mu 1_F} f}{g}_{A^2_{\alpha}}} & = & \abs{\int_{\B_n} 1_F(w)f(w)\overline{g(w)}\,d\mu(w)}\\
& \leq & \norm{1_F f}_{L^p(\mu)}\norm{g}_{L^q(\mu)}\\
&\lesssim &  \norm{T_\mu}^{\frac{1}{q}}_{\mathcal{L}(A^p_\alpha,A^p_\alpha)}\norm{1_F f}_{L^p(\mu)}\norm{g}_{A^q_\alpha}.
\end{eqnarray*}
Taking the supremum over $g\in A^q_{\alpha}$ we have the desired result.
\end{proof}

For a Carleson measure $\mu$ and $1<p<\infty$ and for $f\in L^p(\B_n;\mu)$ we also define
$$
P_\mu f(z):=\int_{\B_n}\frac{f(w)}{(1-\overline{w}z)^{n+1+\alpha}}\,d\mu(w).
$$
It is easy to see based on the computations above that $P_\mu$ is a bounded operator from $L^p(\B_n;\mu)$ to $A^p_{\alpha}$ and $T_\mu=P_{\mu}\circ \imath_p$.

\subsection{Geometric Decompositions of \texorpdfstring{$\B_n$}{the Unit Ball}.}

We will use the following geometric facts. The first lemma is classical and we omit its proof. The proof of the other two can be found in~\cite{Sua}.

\begin{lm}
\label{StandardGeo}
Given $\varrho>0$, there is a family of Borel sets $D_m\subset\B_n$ and points $\{w_m\}_{m=1}^{\infty}$ such that
\begin{itemize}
\item[(i)] $D\left(w_m,\frac{\varrho}{4}\right)\subset D_m\subset D\left(w_m,\varrho\right)$ for all $m$;
\item[(ii)] $D_k\cap D_l=\emptyset$ if $k\neq l$;
\item[(iii)] $\bigcup_{m} D_m=\B_n$.
\end{itemize}
\end{lm}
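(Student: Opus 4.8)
The plan is to produce the points $\{w_m\}$ as a maximal net in the hyperbolic metric and then carve the sets $D_m$ out of the associated hyperbolic discs by a disjointification that is careful enough to retain the small disc around each center. Concretely, I would first apply Zorn's lemma to the collection of subsets of $\B_n$ that are strictly $\varrho/2$-separated in $\beta$ (that is, $\beta(z,w)>\varrho/2$ for any two distinct points of the set), ordered by inclusion; since the union of a chain of such sets is again strictly $\varrho/2$-separated, a maximal such set exists. Because the metric topology induced by $\beta$ coincides with the Euclidean topology on $\B_n$, the space is separable, so any strictly separated subset is countable; enumerate it as $\{w_m\}$.

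Two consequences of this choice drive the rest. First, maximality forces a covering: no point $z$ can be adjoined without destroying separation, so every $z\in\B_n$ satisfies $\beta(z,w_m)\le \varrho/2$ for some $m$, i.e.\ $\bigcup_m D(w_m,\varrho/2)=\B_n$. Second, strict separation makes the small closed discs pairwise disjoint, since $z\in D(w_k,\varrho/4)\cap D(w_l,\varrho/4)$ would give $\beta(w_k,w_l)\le \varrho/4+\varrho/4=\varrho/2$ by the triangle inequality, contradicting $\beta(w_k,w_l)>\varrho/2$.

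I would then build the partition by reserving the inner discs and distributing only the leftover. Set $L:=\B_n\setminus\bigcup_m D(w_m,\varrho/4)$; for $z\in L$ let $m(z)$ be the least index with $\beta(z,w_m)\le \varrho/2$ (well defined by the covering), and define
$$
D_m:=D(w_m,\varrho/4)\ \cup\ \{z\in L:\ m(z)=m\}.
$$
Property (i) is then immediate: $D(w_m,\varrho/4)\subset D_m$ by construction, while every point adjoined from $L$ lies within $\beta$-distance $\varrho/2<\varrho$ of $w_m$, so $D_m\subset D(w_m,\varrho)$. Disjointness (ii) holds because the reserved inner discs are disjoint by the second consequence, the leftover points carry a single least index, and no leftover point lies in any inner disc. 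Covering (iii) is clear since each $z$ is either in some inner disc or in $L$, and in either case lands in a $D_m$. Each $D_m$ is Borel: the inner disc is closed, $L$ is Borel, and $\{z\in L:\ m(z)=m\}=L\cap D(w_m,\varrho/2)\setminus\bigcup_{j<m}D(w_j,\varrho/2)$ is a Boolean combination of closed discs.

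The one genuinely delicate point, and the reason the naive recipe of disjointifying the discs $D(w_m,\varrho/2)$ in order fails, is that these enlarged discs overlap, so an in-order set difference can slice into the small disc $D(w_m,\varrho/4)$ and break the lower inclusion in (i). Reserving the pairwise-disjoint inner discs first, and only then assigning the residual set $L$ by least index, is exactly what reconciles disjointness with the requirement $D(w_m,\varrho/4)\subset D_m$; getting that ordering right is where the care lies, the remaining verifications being routine.
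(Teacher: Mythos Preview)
The paper does not prove this lemma at all; it simply declares the result classical and omits the proof. Your argument is correct and is one of the standard constructions: a maximal strictly $\varrho/2$-separated net in the hyperbolic metric (countable by separability), whose maximality yields the cover by discs $D(w_m,\varrho/2)$ and whose separation makes the inner discs $D(w_m,\varrho/4)$ pairwise disjoint, followed by a least-index disjointification of the leftover that preserves the lower inclusion in (i). There is nothing in the paper to compare your route against.
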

It is easy to see that when the radius $\varrho$ is fixed, for $w\in D_m$, then $(1-\abs{w}^2)\approx(1-\abs{w_m}^2)$ and $\abs{1-\overline{z}w}\approx \abs{1-\overline{z}w_m}$ uniformly in $z\in\B_n$.

\begin{lm}[Lemma 3.1, \cite{Sua}]
\label{SuaGeo}
There is a positive integer $N=N(n)$ such that for any $\sigma>0$ there is a covering of $\B_n$ by Borel sets $\{B_j\}$ satisfying:
\begin{itemize}
\item[(i)] $B_j\cap B_k=\emptyset$ if $j\neq k$;
\item[(ii)] every point of $\B_n$ belongs to at most $N$ sets $\Omega_\sigma(B_j)=\{z:\beta(z,B_j)\leq\sigma\}$;
\item[(iii)] there is a constant $C(\sigma)>0$ such that $\textnormal{diam}_{\beta} \,B_j\leq C(\sigma)$ for all $j$.
\end{itemize}
\end{lm}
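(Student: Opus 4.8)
The plan is to build the cover by a greedy net followed by a disjointification, so that (i) and (iii) come essentially for free, and then to extract (ii) from the separation of the net together with the volume estimates of Lemma~\ref{Growth}. First I would fix $\sigma>0$ and choose, by a Zorn/greedy argument, a maximal $\sigma$-separated set $\{w_j\}\subset\B_n$ with respect to the hyperbolic metric, i.e.\ $\beta(w_i,w_j)\ge\sigma$ for $i\ne j$; such a set is automatically countable since $\B_n$ is separable. Maximality forces $\bigcup_j D(w_j,\sigma)=\B_n$, because any point at $\beta$-distance $\ge\sigma$ from every $w_j$ could be adjoined to the net. Enumerating the net and setting $B_1=D(w_1,\sigma)$ and $B_j=D(w_j,\sigma)\setminus\bigcup_{i<j}D(w_i,\sigma)$ produces pairwise disjoint Borel sets that still cover $\B_n$, which gives (i) at once. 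Since each $B_j\subset D(w_j,\sigma)$ and a $\beta$-ball of radius $\sigma$ has $\beta$-diameter $2\sigma$, property (iii) holds with $C(\sigma)=2\sigma$. An equally natural starting point would be to group the fine cells of Lemma~\ref{StandardGeo} into clusters of controlled $\beta$-diameter, which makes (iii) transparent; I prefer the net because it keeps the separation explicit for the next step.

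For (ii) the key reduction is the containment $\Omega_\sigma(B_j)\subset D(w_j,2\sigma)$: if $\beta(z,B_j)\le\sigma$ then there is $w\in B_j\subset D(w_j,\sigma)$ with $\beta(z,w)\le\sigma$, and the triangle inequality gives $\beta(z,w_j)\le 2\sigma$. Hence the number of indices $j$ with $z\in\Omega_\sigma(B_j)$ is bounded by the number of net points lying in $D(z,2\sigma)$. To count these I would use the separation: the balls $D(w_j,\tfrac\sigma2)$ are pairwise disjoint, and whenever $w_j\in D(z,2\sigma)$ they all lie in $D(z,\tfrac{5\sigma}{2})$. Comparing the invariant volumes of these balls then bounds the number of admissible $w_j$, and that bound is the overlap constant $N$. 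The comparisons rest on the Möbius identity $1-|\varphi_z(w)|^2=\frac{(1-|z|^2)(1-|w|^2)}{|1-\overline z w|^2}$, together with Lemma~\ref{Growth}, which control the size of $v_\alpha\bigl(D(w_j,\tfrac\sigma2)\bigr)$ once the centres are confined to a common $\beta$-ball.

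The step I expect to be the main obstacle is precisely the overlap bound (ii), and specifically keeping the constant $N$ genuinely dimensional. The difficulty is geometric rather than formal: the hyperbolic metric on $\B_n$ is not doubling, so the ratio of invariant volumes of concentric $\beta$-balls is sensitive to their radii, and a careless packing estimate produces a constant that degrades as $\sigma$ grows. The remedy is to exploit the homogeneity of $\B_n$ — the invariance of $\beta$ and of the invariant measure under the automorphisms $\varphi_z$ — to reduce the count of net points in $D(z,2\sigma)$ to a single model computation centred at the origin, and to arrange the cells so that the relevant packing occurs between balls whose radii are tied to the fattening scale. This balancing is the heart of the argument and is exactly the content carried out carefully in \cite{Sua}; once it is in place, (i) and (iii) follow from the construction above with no further work.
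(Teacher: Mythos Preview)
The paper does not prove this lemma; it explicitly defers to \cite{Sua}. So there is no in-paper proof to compare against, and the question is whether your proposal would actually work.

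There is a genuine gap, and it lies exactly where you suspect. Your construction---a maximal $\sigma$-separated net $\{w_j\}$ followed by disjointification of the balls $D(w_j,\sigma)$---cannot produce an overlap constant $N$ that is independent of $\sigma$. Your packing argument bounds the number of indices $j$ with $z\in\Omega_\sigma(B_j)$ by the ratio of invariant volumes
\[
\frac{\mathrm{vol}_\beta\bigl(D(0,\tfrac{5\sigma}{2})\bigr)}{\mathrm{vol}_\beta\bigl(D(0,\tfrac{\sigma}{2})\bigr)},
\]
and in the Bergman metric on $\B_n$ the invariant volume of $D(0,r)$ grows like $e^{2nr}$ for large $r$. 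Hence this ratio behaves like $e^{4n\sigma}$, not like a dimensional constant. The homogeneity you invoke does make the count independent of the center $z$---that part is fine---but it does nothing to remove the dependence on $\sigma$. Your sentence about ``arranging the cells so that the relevant packing occurs between balls whose radii are tied to the fattening scale'' does not describe an actual mechanism: in your construction the cell radii \emph{are} already at scale $\sigma$, and that is precisely what makes the ratio blow up. No choice of separation scale for a net (whether $\sigma$, $c\sigma$, or a fixed $r_0$) rescues this; the exponential volume growth always wins.

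What is needed is a genuinely different decomposition, and this is what \cite{Sua} supplies. The construction there is not net-based: it uses a layered, tree-like structure (built from a radial slicing of $\B_n$ into hyperbolic annuli together with a compatible angular decomposition coming from the boundary sphere) in which the $\sigma$-fattening of any cell $B_j$ can only meet cells in adjacent layers and a bounded number of angular neighbors. The bounded branching of this tree is what yields $N=N(n)$ independent of $\sigma$, while the diameter bound $C(\sigma)$ is allowed to grow with $\sigma$. Your greedy-net approach gives (i) and (iii) cheaply but cannot reach (ii); the argument in \cite{Sua} pays more up front in the construction precisely to make (ii) dimensional.
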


Let $\sigma>0$ and $k$ be a non-negative integer.  Let $\{B_j\}$ be the covering of the ball satisfying the conditions of Lemma \ref{SuaGeo} with $(k+1)\sigma$ instead of $\sigma$.  For $0\leq i\leq k$ and $j\geq 1$ write
$$
F_{0,j}:=B_j\quad\textnormal{ and }\quad F_{i+1,j}:=\left\{z:\beta(z,F_{i,j})\leq\sigma\right\}.
$$
Then we have,
\begin{lm}[Corollary 3.3, \cite{Sua}]
\label{SuaGeo2}
Let $\sigma>0$ and $k$ be a non-negative integer.  For each $0\leq i\leq k$ the family of sets $\mathcal{F}_{i}=\{F_{i,j}: j\geq 1\}$ forms a covering of $\B_n$ such that
\begin{itemize}
\item[(i)] $F_{0,j_1}\cap F_{0,j_2}=\emptyset$ if $j_1\neq j_2$;
\item[(ii)] $F_{0,j}\subset F_{1,j}\subset\cdots\subset F_{k+1,j}$ for all $j$;
\item[(iii)] $\beta(F_{i,j}, F_{i+1,j}^c)\geq\sigma$ for all $0\leq i\leq k$ and $j\geq 1$;
\item[(iv)] every point of $\B_n$ belongs to no more than $N$ elements of $\mathcal{F}_{i}$;
\item[(v)] $\textnormal{diam}_{\beta} \,F_{i,j}\leq C(k,\sigma)$ for all $i,j$.
\end{itemize}
\end{lm}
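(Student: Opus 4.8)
The plan is to reduce every assertion to a single structural fact: each $F_{i,j}$ is contained in the $i\sigma$-neighborhood of its base set $B_j$, that is,
$$
F_{i,j}\subset \Omega_{i\sigma}(B_j)=\{z\in\B_n:\beta(z,B_j)\le i\sigma\},\qquad 0\le i\le k+1.
$$
I would establish this by induction on $i$. For $i=0$ it is immediate, since $F_{0,j}=B_j$ and $\beta(z,B_j)=0$ for $z\in B_j$. For the inductive step, if $z\in F_{i+1,j}$ then $\beta(z,F_{i,j})\le\sigma$, so for every $\varepsilon>0$ there is $w\in F_{i,j}$ with $\beta(z,w)<\sigma+\varepsilon$; applying the inductive hypothesis to $w$ and the triangle inequality for $\beta$ gives $\beta(z,B_j)<(i+1)\sigma+2\varepsilon$, and letting $\varepsilon\to0$ yields $\beta(z,B_j)\le(i+1)\sigma$. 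This containment is the only real content; everything else is bookkeeping with the triangle inequality.

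Granting it, the covering property together with (i) and (ii) are essentially free. Because $\{B_j\}$ already covers $\B_n$ and $B_j=F_{0,j}\subset F_{i,j}$, each $\mathcal{F}_i$ covers $\B_n$; item (i) is exactly property (i) of Lemma~\ref{SuaGeo} since $F_{0,j}=B_j$; and for (ii) any $z\in F_{i,j}$ satisfies $\beta(z,F_{i,j})=0\le\sigma$, hence $z\in F_{i+1,j}$, giving the nested inclusions. For the separation in (iii) I would argue straight from the definition rather than from the neighborhood description: if $z\in F_{i,j}$ and $w\in F_{i+1,j}^c$, then by definition of $F_{i+1,j}$ we have $\beta(w,F_{i,j})>\sigma$, while $z\in F_{i,j}$ forces $\beta(w,F_{i,j})\le\beta(w,z)$, so $\beta(z,w)>\sigma$; taking the infimum over all such pairs gives $\beta(F_{i,j},F_{i+1,j}^c)\ge\sigma$.

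Item (iv) is where the choice of the parameter $(k+1)\sigma$ in Lemma~\ref{SuaGeo} pays off. Since $i\le k+1$, the containment from the first paragraph gives $F_{i,j}\subset\Omega_{(k+1)\sigma}(B_j)$, so a point lying in $F_{i,j}$ for several indices $j$ lies in the same number of sets $\Omega_{(k+1)\sigma}(B_j)$, which by property (ii) of Lemma~\ref{SuaGeo} is at most $N$. For (v), given $z,z'\in F_{i,j}$ I would choose $b,b'\in B_j$ nearly realizing $\beta(z,B_j)$ and $\beta(z',B_j)$ and combine $\textnormal{diam}_{\beta}\,B_j\le C((k+1)\sigma)$ from property (iii) of Lemma~\ref{SuaGeo} with the triangle inequality to obtain $\beta(z,z')\le 2(k+1)\sigma+C((k+1)\sigma)=:C(k,\sigma)$.

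There is no genuinely difficult step here; the only point demanding care is that $B_j$ need not be closed, so the neighborhood containment must be stated with the non-strict inequality $\beta(z,B_j)\le i\sigma$ and the approximating points $w,b,b'$ must be selected up to an arbitrarily small $\varepsilon$ before passing to the limit. Tracking strict versus non-strict inequalities is likewise what makes the separation estimate in (iii) come out with the clean constant $\sigma$.
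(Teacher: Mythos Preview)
Your argument is correct in every detail: the inductive containment $F_{i,j}\subset\Omega_{i\sigma}(B_j)$ is the key observation, and from it (iv) and (v) follow at once by invoking Lemma~\ref{SuaGeo} with parameter $(k+1)\sigma$, while (i)--(iii) and the covering property are immediate from the definitions as you say. The paper does not supply its own proof of this lemma---it is quoted verbatim as Corollary~3.3 of \cite{Sua} and the reader is referred there---so there is nothing in the present paper to compare your approach against; your direct verification is the natural one and is presumably what appears in \cite{Sua}.
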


\section{Approximation by Segmented Operators}
\label{Approximation}

The goal of this section is to show that every operator in the Toeplitz algebra can be approximated by certain
localized operators that are sums of compact operators. This approximation will help us to estimate the essential norm.

\begin{thm}
\label{Approx3}
Let $S\in\mathcal{T}_{p,\alpha}$, $\mu$ be a $A^p_\alpha$ Carleson measure and $\epsilon>0$.  Then there are Borel sets $F_j\subset G_j\subset\B_n$ such that
\begin{itemize}
\item[(i)] $\B_n=\cup F_j$;
\item[(ii)] $F_j\cap F_k=\emptyset$ if $j\neq k$;
\item[(iii)] each point of $\B_n$ lies in no more than $N(n)$ of the sets $G_j$;
\item[(iv)] $\textnormal{diam}_{\beta}\, G_j\leq d(p,S,\epsilon)$
\end{itemize}
and
$$
\norm{ST_\mu-\sum_{j=1}^{\infty} M_{1_{F_j}}ST_{\mu1_{G_j}}}_{\mathcal{L}(A^p_{\alpha}, L^p_{\alpha})}<\epsilon.
$$
\end{thm}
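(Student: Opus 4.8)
I would begin by applying Lemma~\ref{SuaGeo2} with a separation parameter $\sigma>0$ and a nonnegative integer $k$, both to be chosen at the end, and set $F_j:=F_{0,j}$ and $G_j:=F_{k+1,j}$. Then (i) and (ii) are the covering and disjointness of $\{F_{0,j}\}$ in Lemma~\ref{SuaGeo2}(i), (iii) is the bounded overlap in Lemma~\ref{SuaGeo2}(iv), and (iv) holds with $d(p,S,\epsilon)=C(k,\sigma)$ by Lemma~\ref{SuaGeo2}(v). Moreover the separations in Lemma~\ref{SuaGeo2}(iii) accumulate, so $\beta(F_j,G_j^c)\gtrsim (k+1)\sigma$ grows with $k$. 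Since $\{F_j\}$ is a partition, $\sum_j M_{1_{F_j}}=I$, and writing $T_\mu=T_{\mu 1_{G_j}}+T_{\mu 1_{G_j^c}}$ gives
\[
ST_\mu-\sum_j M_{1_{F_j}}ST_{\mu 1_{G_j}}=\sum_j M_{1_{F_j}}\,S\,T_{\mu 1_{G_j^c}},
\]
so the task is to bound the right-hand side by $\epsilon$ in $\mathcal L(A^p_\alpha,L^p_\alpha)$.

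\textbf{A uniform bound and reduction to polynomials.} The first key point is an estimate that is uniform over the decomposition. For any bounded $R$ on $A^p_\alpha$, the same identity yields $\sum_j M_{1_{F_j}}R\,T_{\mu 1_{G_j^c}}=RT_\mu-\sum_j M_{1_{F_j}}R\,T_{\mu 1_{G_j}}$. The first term has norm $\le\|R\|\,\|T_\mu\|$. For the second, the disjointness of the $F_j$ gives $\|\sum_j M_{1_{F_j}}Rh_j\|_{L^p_\alpha}^p=\sum_j\|1_{F_j}Rh_j\|_{L^p_\alpha}^p\le\|R\|^p\sum_j\|h_j\|_{A^p_\alpha}^p$ with $h_j=T_{\mu 1_{G_j}}f$; then the (duality) proof of Lemma~\ref{CM-Cor}, the finite overlap (iii), and Lemma~\ref{CM} give $\sum_j\|T_{\mu 1_{G_j}}f\|_{A^p_\alpha}^p\lesssim N\|T_\mu\|^{p/q}\sum_j\int_{G_j}|f|^p\,d\mu\lesssim N\|T_\mu\|^{p/q}\|\imath_p\|^p\|f\|_{A^p_\alpha}^p$. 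Hence $\|\sum_j M_{1_{F_j}}R\,T_{\mu 1_{G_j^c}}\|\le C(\mu)\|R\|$ uniformly in $k,\sigma$. Since finite sums of finite products of Toeplitz operators are dense in $\mathcal T_{p,\alpha}$ by definition, I may fix such an $S_0$ with $\|S-S_0\|<\epsilon/(2C(\mu))$; the uniform bound then disposes of $S-S_0$, leaving $\sum_j M_{1_{F_j}}S_0\,T_{\mu 1_{G_j^c}}$ to be made small by taking the separation large.

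\textbf{The main estimate.} Treating a single product $S_0=T_{a_1}\cdots T_{a_m}$ (a finite sum being handled termwise), I would proceed in two stages. \emph{Off-diagonal decay:} for Borel sets $E,F$ one expects $\|M_{1_E}S_0M_{1_F}\|_{\mathcal L(A^p_\alpha,L^p_\alpha)}\le\Phi_m(\beta(E,F))$ with $\Phi_m(t)\to0$. The base case $m=1$ is the single Toeplitz block, whose kernel $1_E(z)1_F(w)a(w)(1-\overline w z)^{-(n+1+\alpha)}$ is estimated by a Schur test with weights $(1-|z|^2)^{-t}$ evaluated through Lemma~\ref{Growth}; the inductive step splits at the set $H=\{z:\beta(z,E)\le\beta(E,F)/2\}$, inserting $I=M_{1_H}+M_{1_{H^c}}$ between two factors, so that in each summand one sub-product runs between sets separated by at least $\beta(E,F)/2$ (bounded by $\Phi$ via induction) while the other contributes only its operator norm, with the factor $T_\mu$ absorbed using Lemma~\ref{CM}. \emph{Summation:} using the disjoint base cells $\{F_{0,l}\}$ as a partition of the input variable,
\[
\sum_j M_{1_{F_{0,j}}}S_0T_\mu M_{1_{G_j^c}}=\sum_{j}\sum_{l} M_{1_{F_{0,j}}}\,(S_0T_\mu)\,M_{1_{G_j^c\cap F_{0,l}}},
\]
and a summand is nonzero only if $F_{0,l}$ meets $G_j^c$, which forces $\beta(w_j,w_l)\gtrsim(k+1)\sigma$; for such pairs stage one gives $\le\Phi_{m+1}(\beta(w_j,w_l))$. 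As the cells have comparable hyperbolic size and bounded overlap, a Schur test on the scalar matrix $[\Phi_{m+1}(\beta(w_j,w_l))]$ (whose row sums converge by the discrete form of Lemma~\ref{Growth}) bounds the double sum by $\sup_j\sum_{l:\,\beta(w_j,w_l)\gtrsim(k+1)\sigma}\Phi_{m+1}(\beta(w_j,w_l))$, a convergent tail that tends to $0$ as $(k+1)\sigma\to\infty$.

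\textbf{Main obstacle and conclusion.} The crux is reconciling the off-diagonal decay with the sum over infinitely many cells: the complementary sets $G_j^c$ (and the intermediate complements one is tempted to insert) have unbounded overlap, so adding the norms of the localized pieces diverges. The resolution is to never sum a complement directly---stage one keeps both cutoffs as honest sets, so decay is a genuine function of $\beta(E,F)$, and stage two converts the single global sum into a doubly indexed sum over base cells governed by a Schur test, where smallness comes solely from restricting to far pairs. Granting the two stages, $\|\sum_j M_{1_{F_j}}S_0T_{\mu 1_{G_j^c}}\|\le\eta(k)$ with $\eta(k)\to0$; choosing $S_0$ as above and then $k$ large enough that $\eta(k)<\epsilon/2$, with $d(p,S,\epsilon)=C(k,\sigma)$, and combining with the uniform bound of the second paragraph, yields the claimed inequality.
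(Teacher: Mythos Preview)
Your setup, the reduction to $\sum_j M_{1_{F_j}}S\,T_{\mu 1_{G_j^c}}$, the uniform bound in the second paragraph, and the reduction to a finite sum of products $S_0$ are all correct and coincide with the paper's argument. The divergence comes in your ``main estimate,'' and there is a genuine gap in stage two.

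Your stage one produces a single-block bound $\|M_{1_E}S_0M_{1_F}\|\le\Phi_m(\beta(E,F))$, but the inductive splitting at $H=\{z:\beta(z,E)\le\beta(E,F)/2\}$ forces you to apply the bound to sets of \emph{unbounded} hyperbolic diameter, so the decay you actually obtain is the slow one coming from the Schur test of Lemma~\ref{Tech1}, namely $\Phi_m(\sigma)\asymp(1-\tanh^{2n}(\sigma/2))^{\gamma}\asymp e^{-\gamma\sigma}$ with $\gamma<\min\{\tfrac{1}{p(n+1+\alpha)},\tfrac{p-1}{p}\}$. In stage two you need the row sums $\sum_l \Phi_m(\beta(w_j,w_l))$ to converge, but the number of base cells $F_{0,l}$ within hyperbolic distance $r$ of $w_j$ grows like the invariant volume of $D(w_j,r)$, i.e.\ like $e^{2nr}$. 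Since $\gamma\ll 2n$, the series diverges and the tail does not tend to zero; the appeal to ``the discrete form of Lemma~\ref{Growth}'' does not rescue this, because the corresponding Forelli--Rudin integral is exactly in the divergent regime. Even if one sharpens $\Phi_1$ using that both cells have bounded diameter, a direct computation shows the matrix Schur test still fails for general $p$ and $\alpha$.

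The paper avoids this obstruction by \emph{never} summing individual block estimates. Instead it telescopes: with $S_m=\sum_j M_{1_{F_{0,j}}}\big[\prod_{i\le m}T_{1_{F_{i,j}}a_i}\big]\big[\prod_{i>m}T_{a_i}\big]T_\mu$, each difference $S_m-S_{m+1}$ has the form $\sum_j(\cdots)M_{1_{F_{m,j}}}P_\alpha M_{1_{F_{m+1,j}^c}}(\cdots)$, where the \emph{left} supports $F_{m,j}$ have bounded overlap $N$ while the right supports $F_{m+1,j}^c$ may overlap arbitrarily. Lemma~\ref{Tech2} then bounds the whole sum at once by estimating the single kernel $\sum_j 1_{F_{m,j}}(z)1_{F_{m+1,j}^c}(w)|1-\overline z w|^{-(n+1+\alpha)}$ via Lemma~\ref{Tech1}; the bounded overlap on the left is exactly what makes the Schur test close, and no summability of off-diagonal blocks is required. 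The intermediate layers $F_{1,j},\dots,F_{k,j}$ in Lemma~\ref{SuaGeo2} exist precisely to make this telescoping possible---in your argument you use only $F_{0,j}$ and $F_{k+1,j}$, which is why you are forced into the unsummable double sum.
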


In order to prove this result we will need several technical estimates that we group into a few lemmas. The following classical test for boundedness will be used repeatedly in what follows.

\begin{lm}[Schur's Lemma]
\label{Schur}
Let $(X,\mu)$ and $(X,\nu)$ be measure spaces, $K(x,y)$ a non-negative measurable function on $X\times X$,
$1<p<\infty$ and $\frac{1}{p}+\frac{1}{q}=1$.  If $h$ is a positive function on $X$ that is measurable with respect to
$\mu$ and $\nu$, and $C_p$ and $C_q$ are positive constants such that
\begin{itemize}
\item[] $$\int_{X} K(x,y) h(y)^q\,d\nu(y)\leq C_q h(x)^q\textnormal{ for } \mu\textnormal{-almost every } x,$$
\item[] $$\int_{X} K(x,y) h(x)^p\,d\mu(x)\leq C_p h(y)^p\textnormal{ for } \nu\textnormal{-almost every } y,$$
\end{itemize}
then $Tf(x)=\int_{X} K(x,y) f(y)\,d\nu(y)$ defines a bounded operator $T:L^p(X;\nu)\to L^p(X;\mu)$ with $\norm{T}_{L^p(\nu)\to L^p(\mu)}\leq C_q^{\frac{1}{q}}C_p^{\frac{1}{p}}$.
\end{lm}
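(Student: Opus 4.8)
The plan is to prove this by the standard weighted Hölder argument that underlies every Schur test. The entire proof rests on one algebraic observation: inside the kernel integral defining $Tf$ one factors $K(x,y)=K(x,y)^{1/q}K(x,y)^{1/p}$ and simultaneously inserts the weight $h(y)$ in a balanced way, writing the integrand as the product $\bigl(K(x,y)^{1/q}h(y)\bigr)\cdot\bigl(K(x,y)^{1/p}h(y)^{-1}\abs{f(y)}\bigr)$. This pairing is engineered so that Hölder's inequality with the conjugate exponents $q$ and $p$ separates the two factors and lets the two hypotheses act one at a time.

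First I would fix $f\in L^p(X;\nu)$ and derive a pointwise bound on $\abs{Tf(x)}$. Applying Hölder's inequality to the factored integrand gives
$$
\abs{Tf(x)}\leq\left(\int_X K(x,y)h(y)^q\,d\nu(y)\right)^{1/q}\left(\int_X K(x,y)\frac{\abs{f(y)}^p}{h(y)^p}\,d\nu(y)\right)^{1/p}.
$$
The first factor is controlled directly by the first hypothesis, which bounds it by $C_q^{1/q}h(x)$, so that
$$
\abs{Tf(x)}\leq C_q^{1/q}\,h(x)\left(\int_X K(x,y)\frac{\abs{f(y)}^p}{h(y)^p}\,d\nu(y)\right)^{1/p}.
$$
Next I would raise this to the $p$-th power, integrate in $x$ against $\mu$, and invoke Tonelli's theorem (legitimate since every factor is non-negative) to exchange the order of integration. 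This produces an inner integral $\int_X K(x,y)h(x)^p\,d\mu(x)$, which the second hypothesis bounds by $C_p h(y)^p$; the weights $h(y)^p$ then cancel and one is left with
$$
\norm{Tf}_{L^p(\mu)}^p\leq C_q^{p/q}C_p\int_X\abs{f(y)}^p\,d\nu(y)=C_q^{p/q}C_p\,\norm{f}_{L^p(\nu)}^p.
$$
Taking $p$-th roots yields the claimed operator norm bound $\norm{T}\leq C_q^{1/q}C_p^{1/p}$.

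There is no genuine obstacle here; the argument is entirely mechanical once the correct factorization is in place, and the only points warranting a word of care are minor. The positivity of $h$ is needed so that division by $h(y)^p$ is legitimate and the weights cancel cleanly, while the appeal to Tonelli rather than Fubini is what permits the interchange of integrals without any a priori integrability hypothesis on the double integral. Finally, the ``almost every'' qualifiers in the hypotheses are harmless, since the pointwise estimate on $\abs{Tf(x)}$ need only hold $\mu$-almost everywhere to control the $L^p(\mu)$ norm.
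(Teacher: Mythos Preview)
Your argument is correct and is precisely the standard proof of Schur's Lemma via H\"older's inequality and Tonelli. The paper itself does not supply a proof---it states the lemma as classical and moves on---so there is nothing to compare; your proposal simply fills in the omitted textbook argument.
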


\begin{lm}
\label{Tech1}
Let $1<p<\infty$, $\alpha>-1$ and $\mu$ be a $A^p_\alpha$ Carleson measure.
Suppose that $F_j, K_j\subset\B_n$ are Borel sets such that $\{F_j\}$ are pairwise disjoint
and $\beta(F_j,K_j)>\sigma\geq 1$ for all $j$.  If\/ $0<\gamma<\min\left\{\frac{1}{p(n+1+\alpha)},\frac{p-1}{p}\right\}$, then
\begin{equation}
\label{Tech-Est1}
\int_{\B_n}\sum_{j=1}^\infty 1_{F_j}(z) 1_{K_j}(w)\frac{(1-\abs{w}^2)^{-\frac{1}{p}}}{\abs{1-\overline{z}w}^{n+1+\alpha}}\,d\mu(w)\lesssim \norm{T_\mu}_{\mathcal{L}(A^p_\alpha, A^p_\alpha)} (1-\delta^{2n})^{\gamma}(1-\abs{z}^2)^{-\frac{1}{p}}
\end{equation}
where $\delta=\tanh\frac{\sigma}{2}$ and the implied constants depend on $n, \alpha$ and $p$.
\end{lm}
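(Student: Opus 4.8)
The plan is to estimate the integral pointwise in $z$ and read off the decay from the hyperbolic separation. First I would fix $z\in\B_n$ and exploit that the $F_j$ are pairwise disjoint: either $z\notin\bigcup_j F_j$, in which case the integrand vanishes identically, or there is a unique $j_0$ with $z\in F_{j_0}$, and then the sum collapses to a single term,
$$\int_{\B_n}\sum_{j}1_{F_j}(z)1_{K_j}(w)\frac{(1-|w|^2)^{-1/p}}{|1-\overline{z}w|^{n+1+\alpha}}\,d\mu(w)=\int_{K_{j_0}}\frac{(1-|w|^2)^{-1/p}}{|1-\overline{z}w|^{n+1+\alpha}}\,d\mu(w).$$
The hypothesis $\beta(F_{j_0},K_{j_0})>\sigma$ forces $\rho(z,w)>\tanh\sigma$ for every $w$ in this domain, so by the M\"obius identity $1-\rho(z,w)^2=\frac{(1-|z|^2)(1-|w|^2)}{|1-\overline{z}w|^2}$ one has $\frac{(1-|z|^2)(1-|w|^2)}{|1-\overline{z}w|^2}<1-\tanh^2\sigma$. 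A short computation with the half-angle formula $\tanh\sigma=\frac{2\delta}{1+\delta^2}$ gives $(1-\tanh^2\sigma)^{1/2}=\frac{1-\delta^2}{1+\delta^2}\le 1-\delta^{2n}$, which is the source of the factor $(1-\delta^{2n})^\gamma$.

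Next I would trade the measure $\mu$ for Lebesgue measure at the cost of $\norm{T_\mu}$. Fixing a lattice decomposition $\{D_m\}$ as in Lemma~\ref{StandardGeo}, on each cell one has $1-|w|^2\approx 1-|w_m|^2$ and $|1-\overline{z}w|\approx|1-\overline{z}w_m|$ uniformly in $z$, while Lemma~\ref{CM} gives $\mu(D_m)\lesssim\norm{T_\mu}(1-|w_m|^2)^{n+1+\alpha}\approx\norm{T_\mu}\,v_\alpha(D_m)$. Summing the resulting cell-by-cell estimates and using $K_{j_0}\subset\{w:\rho(z,w)>\tanh\sigma\}$ reduces matters to a Bergman-type bound,
$$\int_{K_{j_0}}\frac{(1-|w|^2)^{-1/p}}{|1-\overline{z}w|^{n+1+\alpha}}\,d\mu(w)\lesssim\norm{T_\mu}\int_{\{\rho(z,\cdot)>\tanh\sigma\}}\frac{(1-|w|^2)^{-1/p}}{|1-\overline{z}w|^{n+1+\alpha}}\,dv_\alpha(w).$$

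To create the decay factor and finish, I would insert $1\le\big(\tfrac{1-\tanh^2\sigma}{1-\rho(z,w)^2}\big)^{\gamma(n+1+\alpha)}$ on the region of integration and substitute $1-\rho^2=\frac{(1-|z|^2)(1-|w|^2)}{|1-\overline{z}w|^2}$. This factors out $(1-\tanh^2\sigma)^{\gamma(n+1+\alpha)}(1-|z|^2)^{-\gamma(n+1+\alpha)}$ and converts the integrand into $\frac{(1-|w|^2)^{\alpha-\frac1p-\gamma(n+1+\alpha)}}{|1-\overline{z}w|^{(n+1+\alpha)(1-2\gamma)}}$. Lemma~\ref{Growth}, applied with $t=\alpha-\frac1p-\gamma(n+1+\alpha)$ and $s=(n+1+\alpha)(1-2\gamma)$, returns growth $(1-|z|^2)^{\gamma(n+1+\alpha)-\frac1p}$ precisely because $s-(n+1+t)=\frac1p-\gamma(n+1+\alpha)>0$; multiplying the powers of $1-|z|^2$ collapses the homogeneity to $(1-|z|^2)^{-\frac1p}$, and bounding $(1-\tanh^2\sigma)^{\gamma(n+1+\alpha)}\le(1-\delta^{2n})^{2\gamma(n+1+\alpha)}\le(1-\delta^{2n})^{\gamma}$ produces the asserted inequality.

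The delicate point is the exponent bookkeeping in this last step. The bound $\gamma<\frac{1}{p(n+1+\alpha)}$ is exactly the condition $s>n+1+t$ under which Lemma~\ref{Growth} yields a negative power of $1-|z|^2$ rather than a bounded quantity, so that the correct homogeneity $-\frac1p$ emerges; the admissible range of $\gamma$ must simultaneously keep the exponent $t$ above $-1$, which is what guarantees boundary integrability of the Lebesgue integral and is where the interplay with the threshold $\frac{p-1}{p}$ enters. Confirming that both requirements hold at once, and that every power of $1-|z|^2$ cancels down to exactly $-\frac1p$, is the main obstacle, together with verifying that the passage from the lattice sum to the Lebesgue integral is uniform in $z$, which is guaranteed by the $z$-independent comparability constants on each $D_m$.
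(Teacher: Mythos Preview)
Your approach diverges from the paper's at the decisive step. After reducing to a Lebesgue integral over a hyperbolic annulus, the paper performs the change of variables $w\mapsto\varphi_z(w)$ and then applies H\"older's inequality with exponents $(a,a')$ to split off $v_\alpha(\{|u|>\delta\})^{1/a'}\approx(1-\delta^{2n})^{1/a'}$; the constraints $1<a<p$ and $a(n+1+\alpha-\tfrac{1}{p})<n+1+\alpha$ then translate into the stated range for $\gamma=1/a'$. Your device of inserting $\bigl(\tfrac{1-\tanh^2\sigma}{1-\rho(z,w)^2}\bigr)^{\gamma(n+1+\alpha)}\ge 1$ avoids both the change of variables and H\"older and feeds straight into Lemma~\ref{Growth}; the exponent arithmetic you describe ($s-(n+1+t)=\tfrac{1}{p}-\gamma(n+1+\alpha)$, cancellation of the powers of $1-|z|^2$) is correct and the argument is more direct than the paper's.

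Two points deserve care. First, the lattice step enlarges the domain: once you replace $\int_{D_m\cap K_{j_0}}\phi\,d\mu$ by $\norm{T_\mu}\int_{D_m}\phi\,dv_\alpha$ and sum over cells meeting $K_{j_0}$, those cells may protrude into $\{\rho(z,\cdot)\le\tanh\sigma\}$. The paper fixes this by taking $\varrho=\tfrac{1}{10}$ so that every such cell is still disjoint from $D(z,\sigma/2)$; one then integrates over $\{\rho(z,\cdot)>\delta\}$ with $\delta=\tanh(\sigma/2)$. This shrinking, not the half-angle identity you invoke, is the reason $\delta=\tanh(\sigma/2)$ appears in the statement. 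Second, your attribution of the constraint $t>-1$ to the threshold $\tfrac{p-1}{p}$ does not verify: the condition reads $\gamma<\tfrac{\alpha+1-1/p}{n+1+\alpha}$, which for instance with $n=1$, $\alpha=0$, $p=\tfrac32$ equals $\tfrac16$, strictly below $\min\{\tfrac{1}{p(n+1+\alpha)},\tfrac{p-1}{p}\}=\tfrac13$. The paper's H\"older argument carries the analogous unchecked hypothesis $\alpha-a/p>-1$, so this is a shared lacuna rather than a defect peculiar to your route, and it is immaterial downstream where only the existence of \emph{some} positive $\gamma$ is needed.
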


\begin{proof}
Consider a sequence of points $\{w_m\}$ and Borel sets $D_m$ as in Lemma \ref{StandardGeo} with $\varrho=\frac{1}{10}$.
Standard computations show that there is a constant $C(n,p,\alpha)$ such that
\begin{equation}
\label{Geo-Obs}
\frac{(1-\abs{w}^2)^{-\frac{1}{p}}}{\abs{1-\overline{z}w}^{n+1+\alpha}}\approx \frac{(1-\abs{w_m}^2)^{-\frac{1}{p}}}{\abs{1-\overline{z}w_m}^{n+1+\alpha}}
\end{equation}
for all $w\in D_m$ and $z\in \B_n$.  By the Carleson measure condition, we have a constant $C$ such that
$$
\mu(D_m)\lesssim \norm{T_\mu}_{\mathcal{L}(A^p_\alpha, A^p_\alpha)}v_{\alpha}(D_m).
$$
If $z\in F_j$ and $w\in K_j$, with $\beta(z,w)>\sigma$, then $K_j\subset \B_n\setminus D(z,\sigma)$ and
$$
\sum_{j=1}^\infty  1_{F_j}(z) 1_{K_j}(w)\leq \sum_{j} 1_{F_j}(z) 1_{\B_n\setminus D(z,\sigma)}(w).
$$
Thus, the integral in \eqref{Tech-Est1} is controlled by
\begin{eqnarray*}
J & = & \sum_{j=1}^{\infty}1_{F_j}(z) \int_{\B_n} 1_{\B_n\setminus D(z,\sigma)} \frac{(1-\abs{w}^2)^{-\frac{1}{p}}}{\abs{1-\overline{z}w}^{n+1+\alpha}}\,d\mu(w)\\
& = & \sum_{j=1}^{\infty} 1_{F_j}(z)\int_{\B_n} 1_{D(z,\sigma)^{c}}(w) \phi(z,w)\,d\mu(w)\\
& = & \sum_{j=1}^{\infty} 1_{F_j}(z) J_z.
\end{eqnarray*}
For simplicity, in the above display we write $\phi(z,w)$ as the kernel appearing above, and $D(z,\sigma)^{c}=\B_n\setminus D(z,\sigma)$.  We now estimate each integral $J_z$.  By definition,
\begin{eqnarray*}
J_z:=\int_{\B_n} 1_{D(z,\sigma)^c}(w)\phi(z,w) \,d\mu(w) & = & \sum_{j=1}^{\infty} \int_{D_j} 1_{D(z,\sigma)^c}(w)\phi(z,w) \,d\mu(w)\\
 & \leq & \sum_{D_j\cap D(z,\sigma)^{c}\neq\emptyset} \int_{D_j}\phi(z,w)\,d\mu(w)\\
 & \approx & \sum_{D_j\cap D(z,\sigma)^{c}\neq\emptyset} \int_{D_j}\phi(z,w_j)\,d\mu(w)\\
 & \lesssim & \norm{T_\mu}_{\mathcal{L}(A^p_\alpha, A^p_\alpha)} \sum_{D_j\cap D(z,\sigma)^{c}\neq\emptyset} \int_{D_j}\phi(z,w_j)\,dv_{\alpha}(w)\\
 & \approx & \norm{T_\mu}_{\mathcal{L}(A^p_\alpha, A^p_\alpha)} \sum_{D_j\cap D(z,\sigma)^{c}\neq\emptyset} \int_{D_j}\phi(z,w)\,dv_{\alpha}(w).
\end{eqnarray*}
If $D_j\cap D(z,\sigma)^{c}\neq\emptyset$ and $w\in D_j$, then
$\beta(w,D(z,\sigma)^c)\leq\textnormal{diam}_{\beta} \,D_m\leq 2\varrho=\frac{1}{5}$, and since
$$
\beta\left(D\left(z,\frac{\sigma}{2}\right), D(z,\sigma)^{c}\right)=\frac{\sigma}{2}\geq\frac{1}{2},
$$
we have that $D_j\cap D\left(z,\frac{\sigma}{2}\right)=\emptyset$ whenever $D_j\cap D(z,\sigma)^{c}\neq\emptyset$.  Thus,
$$
J_z \lesssim \norm{T_\mu}_{\mathcal{L}(A^p_\alpha, A^p_\alpha)} \sum_{D_j\cap D(z,\sigma)^{c}\neq\emptyset}
\int_{D_j}\phi(z,w)\,dv_{\alpha}(w)=\norm{T_\mu}_{\mathcal{L}(A^p_\alpha, A^p_\alpha)}
\int_{\B_n}1_{D\left(z,\frac{\sigma}{2}\right)^c}(w)\phi(z,w)\,dv_{\alpha}(w).
$$
Continuing the estimate, we have
\begin{eqnarray*}
J   & \lesssim &  \norm{T_\mu}_{\mathcal{L}(A^p_\alpha, A^p_\alpha)}  \sum_{j=1}^{\infty}1_{F_j}(z)
\int_{\B_n}1_{D\left(z,\frac{\sigma}{2}\right)^c}(w)\phi(z,w)\,dv_{\alpha}(w)\\
 & = & \norm{T_\mu}_{\mathcal{L}(A^p_\alpha, A^p_\alpha)} \sum_{j=1}^{\infty}1_{F_j}(z)\int_{\abs{w}>\delta} \frac{(1-\abs{\varphi_z(w)}^{2})^{-\frac{1}{p}}}{\abs{1-\overline{z}w}^{n+1+\alpha}}\,dv_\alpha(w)\\
 & \leq & \norm{T_\mu}_{\mathcal{L}(A^p_\alpha, A^p_\alpha)} \int_{\abs{w}>\delta} \frac{(1-\abs{w}^2)^{-\frac{1}{p}}(1-\abs{z}^2)^{-\frac{1}{p}}}{\abs{1-\overline{z}w}^{n+1+\alpha-\frac{2}{p}}}\,dv_\alpha(w).
\end{eqnarray*}
Here we have used the change of variable $w'=\varphi_z(w)$ and that the sets $F_j$ are pairwise disjoint.  Pick a number $a=a(n,\alpha,p)$ satisfying
$$
1<a<p\ \textnormal{ and }\  a\left(n+1+\alpha-\frac{1}{p}\right)<n+1+\alpha.
$$
Note that the second condition can be rephrased as $p(n+1+\alpha)<a'$, so it is clear that we can select the number $a$ with the desired properties.  Now apply H\"older's inequality with $\frac{1}{a}+\frac{1}{a'}=1$ to see that
$$
\int_{\abs{w}>\delta} \frac{(1-\abs{w}^2)^{-\frac{1}{p}}}{\abs{1-\overline{z}w}^{n+1+\alpha-\frac{2}{p}}}\,dv_\alpha(w)\leq \left(\int_{\abs{w}>\delta} \frac{(1-\abs{w}^2)^{-\frac{a}{p}}}{\abs{1-\overline{z}w}^{a\left(n+1+\alpha-\frac{2}{p}\right)}}\,dv_\alpha(w)\right)^{\frac{1}{a}} \left(v_\alpha\{w:\abs{w}>\delta\}\right)^{\frac{1}{a'}}.
$$
But, $\left(v_\alpha\{w:\abs{w}>\delta\}\right)^{\frac{1}{a'}}=C(n,\alpha)(1-\delta^{2n})^{\frac{1}{a'}}$,
and by Lemma \ref{Growth} with $t=\alpha-\frac{a}{p}$ and $s=a\left(n+1+\alpha-\frac{2}{p}\right)$, we have
\begin{eqnarray*}
\left(\int_{\abs{w}>\delta} \frac{(1-\abs{w}^2)^{-\frac{a}{p}}}{\abs{1-\overline{z}w}^{a\left(n+1+\alpha-\frac{2}{p}\right)}}\,dv_\alpha(w)\right)^{\frac{1}{a}} & \leq & \left(\int_{\B_n} \frac{(1-\abs{w}^2)^{-\frac{a}{p}}}{\abs{1-\overline{z}w}^{a\left(n+1+\alpha-\frac{2}{p}\right)}}\,dv_\alpha(w)\right)^{\frac{1}{a}}\\
& \leq & \left(\int_{\B_n} \frac{(1-\abs{w}^2)^{\alpha-\frac{a}{p}}}{\abs{1-\overline{z}w}^{a\left(n+1+\alpha-\frac{2}{p}\right)}}
\,dv(w)\right)^{\frac{1}{a}}\leq C(n,\alpha,p),
\end{eqnarray*}
since $a\left(n+1+\alpha-\frac{2}{p}\right)<n+1+\alpha-\frac{a}{p}$ by the choice of $a$.  This then gives
$$
J\lesssim \norm{T_\mu}_{\mathcal{L}(A^p_\alpha, A^p_\alpha)}(1-\delta^{2n})^{\frac{1}{a'}}(1-\abs{z}^2)^{-\frac{1}{p}},
$$
with the restrictions on $a$ giving the corresponding restrictions on $\gamma$ in the statement of the lemma.
\end{proof}

\begin{lm}
\label{Tech2}
Let $1<p<\infty$ and $\mu$ be a $A^p_\alpha$ Carleson measure.  Suppose that $F_j, K_j\subset\B_n$ are Borel sets and $a_j\in L^\infty$ and $b_j\in L^\infty(\B_n;\mu)$ are functions of norm at most 1 for all $j$.  If
\begin{itemize}
\item[(i)] $\beta(F_j, K_j)\geq\sigma\geq 1$;
\item[(ii)] $\textnormal{supp}\, a_j\subset F_j\ $ and $\ \textnormal{supp}\, b_j\subset K_j$;
\item[(iii)] every $z\in\B_n$ belongs to at most $N$ of the sets $F_j$,
\end{itemize}
then $\sum_{j=1}^{\infty} M_{a_j} P_{\mu} M_{b_j}$ is a bounded operator from $A^p_{\alpha}$ to $L^p_{\alpha}$ and there is a function $\beta_{p,\alpha}(\sigma)\to 0$ when $\sigma\to\infty$ such that
\begin{equation}
\label{Tech2-Est1}
\norm{\sum_{j=1}^{\infty} M_{a_j} P_{\mu} M_{b_j}f}_{L^p_{\alpha}}\leq N\beta_{p,\alpha}(\sigma)\norm{T_\mu}_{\mathcal{L}(A^p_\alpha, A^p_\alpha)}\norm{f}_{A^p_{\alpha}},
\end{equation}
and for every $f\in A^p_{\alpha}$
\begin{equation}
\label{Tech2-Est2}
\sum_{j =1}^{\infty}\norm{M_{a_j} P_{\mu} M_{b_j} f}_{L^p_{\alpha}}^p\leq N\beta_{p,\alpha}^p(\sigma)\norm{T_\mu}_{\mathcal{L}(A^p_\alpha, A^p_\alpha)}^p\norm{f}^{p}_{A^p_{\alpha}}.
\end{equation}
\end{lm}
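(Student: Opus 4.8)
The plan is to view $\sum_j M_{a_j}P_\mu M_{b_j}$ as a kernel operator dominated by a positive kernel and to run a localized Schur-type (H\"older) estimate whose decay in $\sigma$ comes from the separation hypothesis (i) through Lemma \ref{Tech1} together with a dual version of it. Writing $g_j:=M_{a_j}P_\mu M_{b_j}f$, we have
$$
g_j(z)=a_j(z)1_{F_j}(z)\int_{K_j}\frac{b_j(w)f(w)}{(1-\overline{w}z)^{n+1+\alpha}}\,d\mu(w),
$$
and since $\norm{a_j}_{L^\infty},\norm{b_j}_{L^\infty(\mu)}\leq 1$ the pointwise domination
$$
|g_j(z)|\leq 1_{F_j}(z)\int_{K_j}\frac{|f(w)|}{|1-\overline{w}z|^{n+1+\alpha}}\,d\mu(w)
$$
holds. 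I will first prove the summed estimate \eqref{Tech2-Est2} and then deduce \eqref{Tech2-Est1} from it, the latter being the weaker bound.

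To prove \eqref{Tech2-Est2}, fix $z$ and split the kernel in the last integral with the Schur weight $(1-|w|^2)^{-1/(pq)}$, applying H\"older's inequality with exponents $q$ and $p$. This produces
$$
|g_j(z)|^p\leq 1_{F_j}(z)\left(\int_{K_j}\frac{(1-|w|^2)^{-1/p}}{|1-\overline{w}z|^{n+1+\alpha}}\,d\mu(w)\right)^{p/q}\int_{K_j}\frac{(1-|w|^2)^{1/q}|f(w)|^p}{|1-\overline{w}z|^{n+1+\alpha}}\,d\mu(w).
$$
When $z\in F_j$, hypothesis (i) forces $K_j\subset D(z,\sigma)^c$, so the first integral is at most the quantity $J_z$ estimated inside the proof of Lemma \ref{Tech1}, namely $J_z\lesssim\norm{T_\mu}_{\mathcal{L}(A^p_\alpha,A^p_\alpha)}(1-\delta^{2n})^{\gamma}(1-|z|^2)^{-1/p}$ with $\delta=\tanh(\sigma/2)$; raising it to the power $p/q$ yields a factor $(1-|z|^2)^{-1/q}$. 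Summing over $j$ (only terms with $z\in F_j$ survive), integrating against $dv_\alpha(z)$ and using Fubini, the combination of (i) and the bounded overlap (iii) gives the pointwise inequality $\sum_j 1_{F_j}(z)1_{K_j}(w)\leq N\,1_{D(w,\sigma)^c}(z)$, which collapses the inner $z$-integral to
$$
\int_{D(w,\sigma)^c}\frac{(1-|z|^2)^{-1/q}}{|1-\overline{w}z|^{n+1+\alpha}}\,dv_\alpha(z).
$$
This is precisely the integral handled in the second half of the proof of Lemma \ref{Tech1} with $p$ and $q$ interchanged and $\mu$ replaced by $v_\alpha$; the M\"obius change of variables $z\mapsto\varphi_w(z)$, H\"older's inequality and Lemma \ref{Growth} bound it by $(1-\delta^{2n})^{\gamma'}(1-|w|^2)^{-1/q}$ for any $0<\gamma'<\min\{\frac{1}{q(n+1+\alpha)},\frac1p\}$. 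The two powers $(1-|w|^2)^{\pm1/q}$ cancel, leaving $\norm{f}_{L^p(\mu)}^p$, which by Lemma \ref{CM} is $\lesssim\norm{\imath_p}^p\norm{f}_{A^p_\alpha}^p\approx\norm{T_\mu}_{\mathcal{L}(A^p_\alpha,A^p_\alpha)}\norm{f}_{A^p_\alpha}^p$. Since $\frac pq+1=p$, the powers of $\norm{T_\mu}_{\mathcal{L}(A^p_\alpha,A^p_\alpha)}$ combine to an exact $p$-th power, so with $\beta_{p,\alpha}(\sigma):=C(n,p,\alpha)(1-\delta^{2n})^{\gamma/q+\gamma'/p}$ we obtain \eqref{Tech2-Est2}; note $\beta_{p,\alpha}(\sigma)\to0$ as $\sigma\to\infty$ because $\gamma,\gamma'>0$ and $\delta\to1$. (Equivalently, the two displayed integrals are exactly the two hypotheses of Schur's Lemma \ref{Schur} for the dominating kernel with weight $h(w)=(1-|w|^2)^{-1/(pq)}$, which by itself already yields \eqref{Tech2-Est1}.)

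Finally, \eqref{Tech2-Est1} follows at once: each $g_j$ is supported in $F_j$, so by (iii) at most $N$ of the numbers $g_j(z)$ are nonzero for any fixed $z$, whence $|\sum_j g_j(z)|^p\leq N^{p-1}\sum_j|g_j(z)|^p$ by the power-mean inequality. Integrating against $dv_\alpha$ and invoking \eqref{Tech2-Est2} gives $\norm{\sum_j g_j}_{L^p_\alpha}^p\leq N^{p-1}\cdot N\beta_{p,\alpha}^p(\sigma)\norm{T_\mu}_{\mathcal{L}(A^p_\alpha,A^p_\alpha)}^p\norm{f}_{A^p_\alpha}^p$, which is \eqref{Tech2-Est1}.

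I expect the main obstacle to be bookkeeping rather than conceptual: one must choose the Schur weight so that, after H\"older, one factor is governed by the $\mu$-estimate of Lemma \ref{Tech1} and the other, after Fubini, by its $v_\alpha$-dual, and then verify that the weights $(1-|w|^2)^{\pm1/q}$ cancel and that the exponents of $\norm{T_\mu}_{\mathcal{L}(A^p_\alpha,A^p_\alpha)}$ sum to exactly $p$. The one genuinely new computation is the dual estimate over $D(w,\sigma)^c$, which requires rerunning the change-of-variables/H\"older/Lemma \ref{Growth} argument of Lemma \ref{Tech1} and checking that the admissible range of $\gamma'$ is the one obtained by swapping $p\leftrightarrow q$.
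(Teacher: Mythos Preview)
Your argument is correct and rests on the same mechanism as the paper's proof: dominate by the positive kernel $\sum_j 1_{F_j}(z)1_{K_j}(w)|1-\bar w z|^{-(n+1+\alpha)}$ and run a Schur estimate with the weight $h(w)=(1-|w|^2)^{-1/(pq)}$, invoking Lemma~\ref{Tech1} for the $\mu$-integral. The organization, however, differs in three respects. First, the paper treats the case $N=1$ (disjoint $F_j$) by applying Schur's Lemma~\ref{Schur} as a black box, and then reduces $N>1$ to $N=1$ by decomposing each $F_j$ into $N$ pieces $A_j^1,\dots,A_j^N$ so that for each fixed $k$ the family $\{A_j^k\}_j$ is disjoint; you instead unroll Schur's Lemma by hand and absorb the overlap directly through $\sum_j 1_{F_j}(z)1_{K_j}(w)\le N\,1_{D(w,\sigma)^c}(z)$. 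Second, the paper proves \eqref{Tech2-Est1} first (it is immediate from Schur in the disjoint case, where it equals \eqref{Tech2-Est2}) and only then extends; you prove \eqref{Tech2-Est2} first and deduce \eqref{Tech2-Est1} via the power-mean inequality, which is a clean shortcut. Third, for the $v_\alpha$-integral the paper does \emph{not} exploit the localization to $D(w,\sigma)^c$ at all---it simply bounds $\int_{\B_n}\Phi(z,w)h(z)^p\,dv_\alpha(z)$ by Lemma~\ref{Growth} without any decay, since the decay $(1-\delta^{2n})^\gamma$ coming from Lemma~\ref{Tech1} alone already suffices. Your ``dual'' estimate with extra decay $(1-\delta^{2n})^{\gamma'}$ is correct but unnecessary; if you drop it you avoid the one computation you flagged as new, and the argument shortens.
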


\begin{proof}
Since $\mu$ is a Carleson measure for $A^p_{\alpha}$, $\imath_p: A_{\alpha}^p\to L^p(\B_n;\mu)$ is bounded, with $\norm{\imath_p}\lesssim \norm{\mu}^{\frac{1}{p}}_{\textnormal{RKM}}$ and so it is enough to prove the following two estimates:
\begin{equation}
\label{Tech2-Est1-Red}
\norm{\sum_{j=1}^{\infty} M_{a_j} P_{\mu} M_{b_j}f}_{L^p_\alpha}\leq N\kappa_{p,\alpha}(\delta)\norm{T_\mu}_{\mathcal{L}(A^p_\alpha, A^p_\alpha)}^{1-\frac{1}{p}}\norm{f}_{L^p(\B_n;\mu)},
\end{equation}
and
\begin{equation}
\label{Tech2-Est2-Red}
\sum_{j =1}^{\infty}\norm{M_{a_j} P_{\mu} M_{b_j} f}_{L^p_\alpha}^p\leq N\kappa_{p,\alpha}^p(\delta)\norm{T_\mu}_{\mathcal{L}(A^p_\alpha, A^p_\alpha)}^{p-1}\norm{f}^{p}_{L^p(\B_n;\mu)}
\end{equation}
where $\delta=\tanh\frac{\sigma}{2}$ and $\kappa_{p,\alpha}(\delta)\to 0$ as $\delta\to 1$.  Estimates \eqref{Tech2-Est1-Red} and \eqref{Tech2-Est2-Red} imply \eqref{Tech2-Est1} and \eqref{Tech2-Est2}
via an application of Lemma \ref{CM}.

First, consider the case when $N=1$, and so the sets $\{F_j\}$ are pairwise disjoint.  Set
$$
\Phi(z,w)=\sum_{j=1}^{\infty} 1_{F_j}(z) 1_{K_j}(w) \frac{1}{\abs{1-\overline{z}w}^{n+1+\alpha}}.
$$
Suppose now that $f\in L^p(\B_n;\mu)$, $\norm{a_j}_{L^\infty}$ and $\norm{b_j}_{L^\infty(\B_n;\mu)}\leq 1$.  Easy estimates show
\begin{eqnarray*}
\abs{\sum_{j=1}^{\infty} M_{a_j} P_{\mu} M_{b_j} f(z)} & = & \abs{\sum_{j=1}^{\infty}a_j(z)\int_{\B_n} \frac{b_j(w) f(w)}{(1-\overline{w} z)^{n+1+\alpha}} \,d\mu(w)}\\
& \leq & \int_{\B_n} \Phi(z,w) \abs{f(w)}\,d\mu(w),
\end{eqnarray*}
which implies that it suffices to prove that the operator with kernel $\Phi(z,w)$ is bounded between the necessary spaces.
Set $h(z)=(1-\abs{z}^2)^{-\frac{1}{pq}}$ and observe that Lemma \ref{Tech1} gives
$$
\int_{\B_n} \Phi(z,w) h(w)^q \,d\mu(w)\lesssim \norm{T_\mu}_{\mathcal{L}(A^p_\alpha, A^p_\alpha)} (1-\delta^{2n})^{\gamma}h(z)^{q}.
$$
While Lemma \ref{Growth}, plus a simple computation, implies that
$$
\int_{\B_n}\Phi(z,w) h(z)^{p} \,dv_\alpha(z)\lesssim h(w)^{p}.
$$
Schur's Lemma and Lemma \ref{Schur} then give that the operator with kernel $\Phi(z,w)$ is bounded from $L^p(\B_n;\mu)$ to $L^p_{\alpha}$ with norm controlled by a constant $C(n,\alpha,p)$ times $k_{p,\alpha}(\delta)\norm{T_\mu}_{\mathcal{L}(A^p_\alpha, A^p_\alpha)}^{1-\frac{1}{p}}$.  We thus have \eqref{Tech2-Est1-Red} when $N=1$.  Since the sets $F_j$ are disjoint in this case, then we also have \eqref{Tech2-Est2-Red} because
$$
\sum_{j =1}^{\infty}\norm{M_{a_j} P_{\mu} M_{b_j} f}_{L^p_{\alpha}}^p=\norm{\sum_{j=1}^{\infty} M_{a_j}P_{\mu} M_{b_j} f}_{L^p_{\alpha}}^p.
$$

Now suppose that $N>1$.  Let $z\in \B_n$ and let $S(z)=\left\{j: z\in F_j\right\}$, ordered according to the index $j$.
Each $F_j$ admits a disjoint decomposition $F_j=\bigcup_{k=1}^{N} A_{j}^{k}$ where $A_{j}^{k}$ is the set of $z\in F_j$
such that
$j$ is the $i^{\textnormal{th}}$ element of $S(z)$.
Then, for $1\leq k\leq N$ the sets $\{A_{j}^k: j\geq 1\}$ are pairwise disjoint.
Hence, we can apply the computations obtained above to conclude that
\begin{eqnarray*}
\sum_{j=1}^{\infty} \norm{M_{a_j} P_{\mu} M_{b_j} f}_{L^p_{\alpha}}^p & = & \sum_{j=1}^{\infty} \sum_{k=1}^{N} \norm{M_{a_j 1_{A^k_j}} P_\mu M_{b_j} f}_{L^p_{\alpha}}^p\\
 & = & \sum_{k=1}^{N}\sum_{j=1}^{\infty} \norm{M_{a_j 1_{A^k_j}} P_\mu M_{b_j} f}_{L^p_{\alpha}}^p\\
 & \lesssim & N k_{p,\alpha}^p(\delta)\norm{T_\mu}_{\mathcal{L}(A^p_\alpha, A^p_\alpha)}^{p-1}\norm{f}_{A^p_\alpha}^p.
\end{eqnarray*}
This gives \eqref{Tech2-Est2-Red}, and \eqref{Tech2-Est1-Red} follows from similar computations.
\end{proof}

\begin{lm}
\label{Approx1}
Let $1<p<\infty$ and $\sigma\geq 1$.  Suppose that $a_1,\ldots, a_k\in L^\infty$ are functions of norm at most 1 and that $\mu$
is a $A^p_\alpha$ Carleson measure.  Consider the covering of\/ $\B_n$ given by Lemma \ref{SuaGeo2} for these values of $k$ and $\sigma\geq 1$.  Then there is a positive constant $C(p,k,n,\alpha)$ such that
\begin{equation}
\label{Approx-Est1}
\norm{\left[ \prod_{i=1}^{k} T_{a_i} \right] T_{\mu}-\sum_{j=1}^\infty M_{1_{F_{0,j}}}
\left[ \prod_{i=1}^{k}T_{a_i}  \right] T_{\mu1_{F_{k+1,j}}}}_{\mathcal{L}(A^p_{\alpha},
L^p_{\alpha})}\lesssim \beta_{p,\alpha}(\sigma)\norm{T_{\mu}}_{\mathcal{L}(A^p_{\alpha}, A^p_{\alpha})},
\end{equation}
where $\beta_{p,\alpha}(\sigma)\to 0$ as $\sigma\to\infty$.
\end{lm}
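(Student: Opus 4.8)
The plan is to reduce everything to Lemma~\ref{Tech2} by a telescoping decomposition in which each resulting operator contains exactly one ``long jump'' across a hyperbolic gap of width at least $\sigma$. Write $S:=\prod_{i=1}^k T_{a_i}=P_\alpha M_{a_1}P_\alpha M_{a_2}\cdots P_\alpha M_{a_k}$. Since $\{F_{0,j}\}$ is a disjoint cover of $\B_n$ by Lemma~\ref{SuaGeo2}(i), we have $\sum_j M_{1_{F_{0,j}}}=I$ on $L^p_\alpha$, and splitting $T_\mu=T_{\mu 1_{F_{k+1,j}}}+T_{\mu 1_{F_{k+1,j}^c}}$ shows that the operator inside the norm in \eqref{Approx-Est1} equals $\sum_j M_{1_{F_{0,j}}}S\,T_{\mu 1_{F_{k+1,j}^c}}$. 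I would then insert $1=1_{F_{m,j}}+1_{F_{m,j}^c}$ between the $m$-th projection and $M_{a_m}$, successively for $m=1,\dots,k$, splitting into a ``stay'' and an ``escape'' branch at each level. This expresses the operator as $\sum_{m=1}^k\mathrm{Tail}_m+W$, where $\mathrm{Tail}_m$ is the branch that first escapes at level $m$ (so the intermediate variables satisfy $z_1\in F_{1,j},\dots,z_{m-1}\in F_{m-1,j}$ but $z_m\in F_{m,j}^c$) and $W$ is the fully localized branch. By Lemma~\ref{SuaGeo2}(iii) each contains a single jump across a gap of width $\ge\sigma$: for $\mathrm{Tail}_m$ the $m$-th projection connects $F_{m-1,j}$ to $F_{m,j}^c$, and for $W$ the factor $T_{\mu 1_{F_{k+1,j}^c}}$ connects $F_{k,j}$ to $F_{k+1,j}^c$.

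The common device is that the outermost factor $M_{1_{F_{0,j}}}$ forces the $j$-th summand to be supported in $F_{0,j}$, and these sets are pairwise disjoint; hence for any family $h_j$ supported in $F_{0,j}$ one has $\norm{\sum_j h_j}_{L^p_\alpha}^p=\sum_j\norm{h_j}_{L^p_\alpha}^p$. Moreover every ``prefix'' operator of localized projections, such as $L_j:=M_{1_{F_{0,j}}}P_\alpha M_{a_1 1_{F_{1,j}}}\cdots P_\alpha$, has norm at most $\norm{P_\alpha}_{\mathcal L(L^p_\alpha)}^k=:C(k,p,\alpha)$ uniformly in $j$, since each factor $M_{(\cdot)1_{F_{i,j}}}$ has norm $\le1$. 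Thus bounding any telescoped term reduces to controlling an $\ell^p$-sum $\sum_j\norm{J_j(\cdot)}_{L^p_\alpha}^p$ of its jump operator $J_j$. For $W$ the jump $\mathcal J_j:=M_{a_k 1_{F_{k,j}}}P_\mu M_{1_{F_{k+1,j}^c}}$ acts directly on $f$, so \eqref{Tech2-Est2} of Lemma~\ref{Tech2}, applied with the measure $\mu$ (using $\mathrm{supp}(a_k1_{F_{k,j}})\subset F_{k,j}$, $\mathrm{supp}(1_{F_{k+1,j}^c})\subset F_{k+1,j}^c$, and the finite overlap Lemma~\ref{SuaGeo2}(iv)), gives $\sum_j\norm{\mathcal J_j f}^p\le N\beta_{p,\alpha}^p(\sigma)\norm{T_\mu}^p\norm{f}^p$, whence $\norm{W}_{\mathcal L(A^p_\alpha,L^p_\alpha)}\lesssim\beta_{p,\alpha}(\sigma)\norm{T_\mu}$.

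The main obstacle is $\mathrm{Tail}_m$ for $1\le m\le k$: here the jump $J_j:=M_{a_{m-1}1_{F_{m-1,j}}}P_\alpha M_{a_m 1_{F_{m,j}^c}}$ (a jump for $v_\alpha$, so $\norm{T_{v_\alpha}}=1$) does not act on $f$ directly but on $R_j f:=\Theta\,T_{\mu 1_{F_{k+1,j}^c}}f$, where $\Theta:=P_\alpha M_{a_{m+1}}\cdots P_\alpha M_{a_k}$ is a fixed bounded operator. These inputs depend on $j$ and are not $\ell^p$-summable, so \eqref{Tech2-Est2} cannot be applied as it stands. The resolution I propose is to split $T_{\mu 1_{F_{k+1,j}^c}}=T_\mu-T_{\mu 1_{F_{k+1,j}}}$, so that $R_j f=g-g_j$ with $g:=\Theta T_\mu f$ independent of $j$ and $g_j:=\Theta T_{\mu 1_{F_{k+1,j}}}f$ carrying the localized measure. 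For the common part $\sum_j L_j J_j g$ I apply \eqref{Tech2-Est2} with $v_\alpha$ exactly as for $W$, together with $\norm{g}\lesssim\norm{T_\mu}\norm{f}$, to obtain the bound $\beta_{p,\alpha}(\sigma)\norm{T_\mu}$. For the localized part $\sum_j L_jJ_jg_j$ the key point is that the $g_j$ \emph{are} now $\ell^p$-summable: Lemma~\ref{CM-Cor} gives $\norm{g_j}^p\lesssim\norm{T_\mu}^{p/q}\int_{F_{k+1,j}}\abs{f}^p\,d\mu$, and summing in $j$ with the overlap bound $\sum_j 1_{F_{k+1,j}}\le N$ and $\norm{\imath_p}^p\approx\norm{T_\mu}$ yields $\sum_j\norm{g_j}^p\lesssim N\norm{T_\mu}^p\norm{f}^p$. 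Bounding each single jump by its $N=1$ norm, $\norm{J_j g_j}\le\beta_{p,\alpha}(\sigma)\norm{g_j}$, and using disjointness of the $F_{0,j}$, I get $\norm{\sum_j L_j J_j g_j}_{L^p_\alpha}^p\le C(k,p,\alpha)^p\beta_{p,\alpha}^p(\sigma)\sum_j\norm{g_j}^p\lesssim\beta_{p,\alpha}^p(\sigma)\norm{T_\mu}^p\norm{f}^p$.

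Combining the estimates for $W$ and for the $k$ terms $\mathrm{Tail}_m$, the total is $\lesssim(k+1)\beta_{p,\alpha}(\sigma)\norm{T_\mu}$, which is \eqref{Approx-Est1} with a constant depending on $p,k,n,\alpha$ and with $\beta_{p,\alpha}(\sigma)\to0$ as $\sigma\to\infty$. The only genuinely delicate step is the treatment of the middle jumps, where the inputs to $J_j$ vary with $j$; the measure-splitting trick converts the problematic part into one governed by the Carleson $L^p(\mu)$-norm of $f$ over the finitely overlapping sets $F_{k+1,j}$, restoring the $\ell^p$-summability needed to feed the $\ell^p$ estimate of Lemma~\ref{Tech2}.
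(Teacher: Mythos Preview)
Your proof is correct and essentially the same as the paper's. The paper organizes the argument as two separate telescoping passes---one with the unlocalized tail $T_\mu$ (your $g$ part) and one with the localized tail $T_{\mu 1_{F_{k+1,j}}}$ (your $g_j$ part)---meeting at the fully localized operator $\sum_j M_{1_{F_{0,j}}}\big[\prod_i T_{a_i 1_{F_{i,j}}}\big]T_{\mu 1_{F_{k+1,j}}}$, whereas you telescope once against $T_{\mu 1_{F_{k+1,j}^c}}$ and split the measure afterward; the key ingredients (Lemma~\ref{Tech2} for each jump, Lemma~\ref{CM-Cor} for the $\ell^p$-summability of the localized-measure tails) are identical.
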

\begin{proof}
We break the proof up into two steps.  We will prove that
\begin{equation}
\label{Step1}
\norm{\left[ \prod_{i=1}^{k}T_{a_i} \right] T_{\mu}-\sum_{j=1}^\infty M_{1_{F_{0,j}}}
\left[ \prod_{i=1}^{k} T_{a_i1_{F_{i,j}}} \right] T_{1_{F_{k+1,j}}\mu}}_{\mathcal{L}(A^p_{\alpha},L^p_{\alpha})} \lesssim  \beta_{p,\alpha}(\sigma)\norm{T_{\mu}}_{\mathcal{L}(A^p_{\alpha}, A^p_{\alpha})}
\end{equation}
and
\begin{equation}
\label{Step2}
\norm{\sum_{j=1}^\infty M_{1_{F_{0,j}}}\left[ \prod_{i=1}^{k}T_{a_i}  \right] T_{\mu1_{F_{k+1,j}}}-
\sum_{j=1}^\infty M_{1_{F_{0,j}}}\left[ \prod_{i=1}^{k} T_{a_i 1_{F_{i,j}}}  \right]
T_{\mu1_{F_{k+1,j}}}}_{\mathcal{L}(A^p_{\alpha}, L^p_{\alpha})}
\lesssim \beta_{p,\alpha}(\sigma)\norm{T_\mu}_{\mathcal{L}(A^p_{\alpha}, A^p_{\alpha})},
\end{equation}
where the constants depend on $p,k,n$, and $\alpha$.
It is obvious that each of these inequalities, when combined give the desired estimate in the Lemma.

For $0\leq m\leq k+1$, define the operators $S_m\in \mathcal{L}\left(A_{\alpha}^p,L_{\alpha}^p\right)$ by
$$
S_m=\sum_{j=1}^{\infty} M_{1_{F_{0,j}}}\left[ \prod_{i=1}^{m} T_{1_{F_{i,j}}a_i}\prod_{i=m+1}^{k} T_{a_i} \right] T_\mu.
$$
Clearly we have
$S_0=\sum_{j=1}^{\infty} M_{1_{F_{0,j}}} \left[\prod_{i=1}^{k} T_{a_i}\right] T_\mu
= \left[\prod_{i=1}^{k} T_{a_i}\right]T_\mu$, with convergence in the strong operator topology.  Similarly, we have
$$
S_{k+1}=\sum_{j=1}^{\infty} M_{1_{F_{0,j}}}\left[\prod_{i=1}^{k} T_{a_i1_{F_{i,j}}} \right] T_{\mu1_{F_{k+1,j}}}.
$$
When $0\leq m\leq k-1$, a simple computation gives that
$$
S_m-S_{m+1}=\sum_{j=1}^{\infty} M_{1_{F_{0,j}}} \left[\prod_{i=1}^{m}T_{1_{F_{i,j}}a_i} \right]
T_{1_{F_{m+1,j}^c}a_{m+1}}  \left[ \prod_{i=m+2}^{k}T_{a_i}\right]  T_\mu.
$$
Here, of course, we should interpret this product as the identity when the lower index is greater than the upper index.  Take any $f\in A^p_{\alpha}$ and apply Lemma \ref{Tech2}, in particular \eqref{Tech2-Est2}, Lemma \ref{SuaGeo2} and some obvious estimates to see that
\begin{eqnarray*}
\norm{\left(S_m-S_{m+1}\right)f}_{L^p_{\alpha}}^p & \leq &
C(p)^{pm}\sum_{j}\norm{M_{1_{F_{m,j}}a_m}P_\alpha M_{1_{F_{m+1,j}^c}a_{m+1}}
\left[\prod_{i=m+2}^k T_{a_i} \right] T_\mu f}_{L^p_{\alpha}}^p\\
& \leq & C(p)^{pm}N\beta_{p,\alpha}^p(\sigma)\norm{\left[\prod_{i=m+2}^k T_{a_i} \right] T_\mu f}_{L^p_{\alpha}}^p\\
& \leq & C(p)^{p(k-1)}N\beta_{p,\alpha}^p(\sigma)\norm{T_\mu}_{\mathcal{L}(A^p_{\alpha},A^p_{\alpha})}^p\norm{f}_{A^p_{\alpha}}^p.
\end{eqnarray*}
Also,
$$
S_{k}-S_{k+1}=\sum_{j=1}^{\infty} M_{1_{F_{0,j}}} \left[ \prod_{i=1}^{k}T_{1_{F_{i,j}}a_i} \right] T_{\mu1_{F_{k+1,j}^c}},
$$
and again applying Lemma \ref{Tech2}, and in particular \eqref{Tech2-Est2}, we find that
$$
\norm{\left(S_k-S_{k+1}\right)f}_{L^p_{\alpha}}^p \leq C_p^{pk}N\beta_{p,\alpha}^p(\sigma)\norm{T_\mu}_{\mathcal{L}(A^p_{\alpha},A^p_{\alpha})}^p\norm{f}^p_{A^p_{\alpha}}.
$$
Since $N=N(n)$, we have the following estimates for $0\leq m\leq k$,
$$
\norm{\left(S_m-S_{m+1}\right)f}_{L^p_{\alpha}}\lesssim \beta_{p,\alpha}(\sigma)\norm{T_\mu}_{\mathcal{L}(A^p_{\alpha},A^p_{\alpha})}\norm{f}_{A^p_{\alpha}}.
$$
But from this it is immediate that \eqref{Step1} holds,
$$
\norm{\left(S_0-S_{k+1}\right)f}_{L^p_{\alpha}}\leq \sum_{m=0}^{k}\norm{\left(S_m-S_{m+1}\right)f}_{L^p_{\alpha}}\lesssim \beta_{p,\alpha}(\sigma)\norm{T_\mu}_{\mathcal{L}(A^p_{\alpha},A^p_{\alpha})}\norm{f}_{A^p_{\alpha}}.
$$

The idea behind \eqref{Step2} is similar.  For $0\leq m\leq k$, define the operator
$$
\tilde{S}_m=\sum_{j=1}^{\infty} M_{1_{F_{0,j}}}\left[\prod_{i=1}^{m} T_{1_{F_{i,j}}a_i}\prod_{i=m+1}^{k} T_{a_i}\right]
T_{\mu1_{F_{k+1,j}}},
$$
so we have
\begin{eqnarray*}
\tilde{S}_0 & = & \sum_{j=1}^{\infty} M_{1_{F_{0,j}}}\left[\prod_{i=1}^{k} T_{a_i}\right] T_{\mu1_{F_{k+1,j}}}\\
\tilde{S}_k & = & \sum_{j=1}^{\infty}M_{1_{F_{0,j}}}\left[\prod_{i=1}^{k}T_{a_i1_{F_{i,j}}}\right]T_{\mu1_{F_{k+1,j}}}.
\end{eqnarray*}
When $0\leq m\leq k-1$, a simple computation gives
$$
\tilde{S}_m-\tilde{S}_{m+1}=\sum_{j=1}^{\infty}M_{1_{F_{0,j}}}\Big[\prod_{i=1}^{m} T_{1_{F_{i,j}}a_i}\Big]
T_{1_{F_{m+1,j}^c}a_{m+1}}  \left[\prod_{i=m+2}^{k}T_{a_i}\right]  T_{\mu1_{F_{k+1,j}}}.
$$
Again, applying obvious estimates and using Lemma \ref{Tech2} one concludes that
\begin{eqnarray*}
\norm{\left(\tilde{S}_m-\tilde{S}_{m+1}\right)f}^p_{L_{\alpha}^p} & \leq & C(p)^{p(k-1)}\beta_{p,\alpha}^{p}(\sigma)\sum_{j=1}^{\infty}\norm{T_{\mu1_{F_{k+1,j}}}f}_{A^p_{\alpha}}^{p}\\
 & \leq & C(p)^{p(k-1)}\beta_{p,\alpha}^{p}(\sigma) \norm{T_{\mu}}_{\mathcal{L}(A^p_{\alpha},A^p_{\alpha})}^{\frac{p}{q}} \sum_{j=1}^{\infty} \norm{1_{F_{k+1,j}} f}_{L^p(\mu)}^p\\
 & \leq & C(p)^{p(k-1)}\beta_{p,\alpha}^{p}(\sigma) \norm{T_{\mu}}_{\mathcal{L}(A^p_{\alpha},A^p_{\alpha})}^{\frac{p}{q}} \norm{f}_{L^p(\mu)}^p\\
 & \leq & NC(p)^{p(k-1)}\beta_{p,\alpha}^{p}(\sigma)
 \norm{T_{\mu}}_{\mathcal{L}(A^p_{\alpha},A^p_{\alpha})}^{\frac{p}{q}+1}  \norm{f}_{A^p_{\alpha}}^p.
\end{eqnarray*}
Here the second inequality uses Lemma \ref{CM-Cor}, the next inequality uses that the sets $\{F_{k+1,j}\}$
form a covering of $\B_n$ with at most $N=N(n)$ overlap, and the last inequality uses Lemma \ref{CM}.
Summing up, for $0\leq m\leq k-1$ we have
$$
\norm{\left(\tilde{S}_m-\tilde{S}_{m+1}\right)f}_{L_{\alpha}^p}\lesssim \beta_{p,\alpha}(\sigma)\norm{T_{\mu}}_{\mathcal{L}(A^p_{\alpha},A^p_{\alpha})}\norm{f}_{A^p_{\alpha}},
$$
which implies
$$
\norm{\left(\tilde{S}_0-\tilde{S}_{k}\right)f}_{L_{\alpha}^p}\leq \sum_{m=0}^{k-1}\norm{\left(\tilde{S}_m-\tilde{S}_{m+1}\right)f}_{L_{\alpha}^p}\lesssim \beta_{p,\alpha}(\sigma)\norm{T_{\mu}}_{\mathcal{L}(A^p_{\alpha},A^p_{\alpha})}\norm{f}_{A^p_{\alpha}},
$$
giving \eqref{Step2}.
\end{proof}

\begin{lm}
\label{Approx2}
Let
$$
S=\sum_{i=1}^{m} \left[\prod_{l=1}^{k_i} T_{a^i_l} \right] T_{\mu_i},
$$
where $a^i_j\in L^\infty$, $k_1,\ldots, k_m\leq k$ and $\mu_i$ are complex-valued measures on $\B_n$ such that $\abs{\mu_i}$
are $A^p_\alpha$ Carleson measures.  Given $\epsilon>0$, there is $\sigma=\sigma(S,\epsilon)\geq 1$ such that if $\{F_{i,j}\}_{j=1}^{\infty}$ and $0\leq i\leq k+1$ are the sets given by Lemma \ref{SuaGeo2} for these values of $\sigma$ and $k$, then
$$
\norm{S-\sum_{j=1}^{\infty}M_{1_{F_{0,j}}}\sum_{i=1}^{m} \left[\prod_{l=1}^{k_i} T_{a^i_l} \right] T_{\mu_i1_{F_{k+1,j}}}}_{\mathcal{L}(A^p_{\alpha},L^p_{\alpha})}<\epsilon.
$$
\end{lm}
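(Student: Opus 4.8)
The plan is to deduce the statement from a single application of Lemma \ref{Approx1} to each summand, combined with the triangle inequality; the content is entirely in Lemma \ref{Approx1}, so the work here is organizational. Two mismatches must be reconciled before Lemma \ref{Approx1} applies: the products $\prod_{l=1}^{k_i} T_{a^i_l}$ have varying lengths $k_i\le k$, whereas the covering furnished by Lemma \ref{SuaGeo2} is tied to the single integer $k$; and the $\mu_i$ are complex, whereas Lemma \ref{Approx1} is phrased for positive Carleson measures. I regard the first point as the only genuine (if minor) obstacle, since a \emph{single} covering must serve every summand at once.

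First I would normalize and pad. Dropping any term containing a factor $a^i_l\equiv 0$, write $a^i_l=\norm{a^i_l}_{L^\infty}\tilde a^i_l$ with $\norm{\tilde a^i_l}_{L^\infty}\le 1$, and recall that $T_1=P_\alpha M_1=P_\alpha$ acts as the identity on $A^p_\alpha$. Since every intermediate output lies in $A^p_\alpha$, inserting $k-k_i$ copies of $T_1$ just before $T_{\mu_i}$ changes nothing:
$$
\Big[\prod_{l=1}^{k_i} T_{a^i_l}\Big]T_{\mu_i} = \Big(\prod_{l=1}^{k_i}\norm{a^i_l}_{L^\infty}\Big)\Big[\prod_{l=1}^{k_i} T_{\tilde a^i_l}\,\prod_{l=k_i+1}^{k} T_1\Big]T_{\mu_i}.
$$
Now every product has length exactly $k$ with symbols of norm at most $1$, so the covering of $\B_n$ from Lemma \ref{SuaGeo2} attached to this fixed $k$ and a common $\sigma\ge 1$ may be used uniformly in $i$, and $T_1=I$ collapses the padded products back to the originals in both the exact and the localized expressions, leaving the outer localizer $M_{1_{F_{0,j}}}$ and the restriction $T_{\mu_i 1_{F_{k+1,j}}}$ intact.

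Next I would handle the complex measures. By the remark following Lemma \ref{CM}, each $\mu_i=\sum_{s=1}^4 c_s\,\mu_{i,s}$ with $c_s\in\{1,-1,i,-i\}$, every $\mu_{i,s}\ge 0$ an $A^p_\alpha$ Carleson measure, and $\sum_s\norm{\mu_{i,s}}_{\mathrm{RKM}}\approx\norm{\,\abs{\mu_i}\,}_{\mathrm{RKM}}$. Restriction to a Borel set preserves positivity and the Carleson condition, so $\mu_i 1_{F_{k+1,j}}=\sum_s c_s\,(\mu_{i,s}1_{F_{k+1,j}})$ and hence $T_{\mu_i 1_{F_{k+1,j}}}=\sum_s c_s\,T_{\mu_{i,s}1_{F_{k+1,j}}}$. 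Applying Lemma \ref{Approx1} to each positive piece $\mu_{i,s}$ with the padded, normalized symbols and the common covering, then recombining through the coefficients $c_s$ and using Lemma \ref{CM} to replace $\sum_s\norm{T_{\mu_{i,s}}}$ by $\norm{T_{\abs{\mu_i}}}$, yields
$$
\norm{\Big[\prod_{l=1}^{k_i} T_{a^i_l}\Big]T_{\mu_i} - \sum_{j=1}^\infty M_{1_{F_{0,j}}}\Big[\prod_{l=1}^{k_i} T_{a^i_l}\Big]T_{\mu_i 1_{F_{k+1,j}}}}_{\mathcal{L}(A^p_\alpha,L^p_\alpha)} \lesssim \Big(\prod_{l=1}^{k_i}\norm{a^i_l}_{L^\infty}\Big)\norm{T_{\abs{\mu_i}}}_{\mathcal{L}(A^p_\alpha,A^p_\alpha)}\,\beta_{p,\alpha}(\sigma),
$$
where the implied constant is the $C(p,k,n,\alpha)$ of Lemma \ref{Approx1}.

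Finally I would sum over $i=1,\dots,m$. The triangle inequality bounds the quantity in the statement by $C(S)\,\beta_{p,\alpha}(\sigma)$, where $C(S)=C(p,k,n,\alpha)\sum_{i=1}^m\big(\prod_l\norm{a^i_l}_{L^\infty}\big)\norm{T_{\abs{\mu_i}}}$ is a finite constant determined entirely by the given data of $S$. Since $\beta_{p,\alpha}(\sigma)\to 0$ as $\sigma\to\infty$ by Lemma \ref{Approx1}, I would choose $\sigma=\sigma(S,\epsilon)\ge 1$ so large that $C(S)\,\beta_{p,\alpha}(\sigma)<\epsilon$, giving the claim.
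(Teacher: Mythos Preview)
Your proposal is correct and follows essentially the same approach as the paper: pad each product with copies of $T_1$ to reach length $k$, reduce complex $\mu_i$ to positive pieces via the Jordan decomposition, apply Lemma~\ref{Approx1} termwise with the common covering, and sum using the triangle inequality. You are in fact slightly more careful than the paper in explicitly normalizing the symbols $a^i_l$ to have $L^\infty$ norm at most $1$ (which Lemma~\ref{Approx1} requires) and in tracking the resulting constant $C(S)$; the only point you leave implicit is the interchange of the finite sum over $i$ with the infinite sum over $j$ in the localized operator, which is immediate since the $F_{0,j}$ are disjoint and hence each $j$-series converges in the strong operator topology.
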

\begin{proof}
First, suppose that $\mu_i$ are non-negative measures.
It suffices to prove the result in this situation since for general $\mu_i$ we can decompose
$\mu_i=\mu_{i,1}-\mu_{i,2}+i\mu_{i,3}-i\mu_{i,4}$, where each $\mu_{i,j}$ is a non-negative measure, and hence a Carleson measure.

Without loss of generality, set $k_i=k$ for all $i=1,\ldots, m$.  This can be accomplished by placing copies of the identity in each product if necessary.  We now apply Lemma \ref{Approx1} to each term in the operator $S$.  By Lemma \ref{Approx1}, for $\sigma=\sigma(S,\epsilon)$ sufficiently large we have
$$
\norm{\left[\prod_{j=1}^{k} T_{a^i_j} \right] T_{\mu_i}-
\sum_{j=1}^{\infty}M_{1_{F_{0,j}}}\left[\prod_{l=1}^{k} T_{a^i_l} \right] T_{\mu_i1_{F_{k+1,j}}}}_{\mathcal{L}(A^p_{\alpha},L^p_{\alpha})}<\frac{\epsilon}{m}
$$
for $i=1,\ldots, m$.  Then, summing this estimate we see that
$$
\norm{S-\sum_{i=1}^{m}\sum_{j=1}^{\infty}M_{1_{F_{0,j}}} \Big[\prod_{l=1}^{k_i} T_{a^i_l} \Big] T_{\mu_i1_{F_{k+1,j}}}}_{\mathcal{L}(A^p_{\alpha},L^p_{\alpha})}<\epsilon.
$$
But, for every $i=1,\ldots, m$ we have that
$\sum_{j=1}^{\infty}M_{1_{F_{0,j}}} \left[\prod_{j=1}^{k_i} T_{a^i_j}\right] T_{1_{F_{k+1,j}}\mu_i}$ converges in the strong operator topology,  and so
$$
\norm{S-\sum_{j=1}^{\infty}M_{1_{F_{0,j}}}\sum_{i=1}^{m} \left[\prod_{l=1}^{k_i} T_{a^i_l}\right] T_{1_{F_{k+1,j}}\mu_i}}_{\mathcal{L}(A^p_{\alpha},L^p_{\alpha})}<\epsilon,
$$
as desired.
\end{proof}

We are finally ready to prove Theorem~\ref{Approx3}.

\begin{proof}
Since $S\in \mathcal{T}_{p,\alpha}$, there is $S_0=\sum_{i=1}^{m}\prod_{l=1}^{k_i}T_{a_l^i}$ such that
$$
\norm{S-S_0}_{\mathcal{L}(A^p_{\alpha}, A^p_{\alpha})}<\epsilon,
$$
where $a_l^i\in L^\infty$ and $k_i$ are positive integers.
Set $k=\max\left\{k_i:i=1,\ldots,m\right\}$.  By Lemma \ref{Approx2} we can choose $\sigma=\sigma(S_0,\epsilon)$ and
sets $F_j=F_{0,j}$ and $G_j=F_{k+1,j}$ with
$$
\norm{S_0T_\mu-\sum_{j=1}^{\infty}M_{1_{F_{j}}} S_0 T_{\mu 1_{G_{j}}}}_{\mathcal{L}(A^p_{\alpha},L^p_{\alpha})}
<\epsilon.
$$
We have that (i), (ii), (iii) and (iv) hold by Lemma \ref{SuaGeo2}.
Now, for $f\in A^p_{\alpha}$ we have
\begin{eqnarray*}
\norm{\sum_{j=1}^{\infty} M_{1_{F_j}}\left(S-S_0\right)T_{\mu 1_{G_j}}f}_{L^p_{\alpha}}^p & = & \sum_{j=1}^{\infty}\norm{ M_{1_{F_j}}\left(S-S_0\right)T_{\mu 1_{G_j}}f}_{L^p_{\alpha}}^p\\
& \leq & \epsilon^p\sum_{j=1}^{\infty}\norm{T_{\mu 1_{G_j}} f}_{A^p_{\alpha}}^p\\
& \leq & \epsilon^p\sum_{j=1}^{\infty} \norm{1_{G_j}f}_{L^p(\mu)}^p\\
& \leq & N\epsilon^p\norm{T_\mu}_{\mathcal{L}(A^p_{\alpha}, A^p_{\alpha})}^p\norm{f}_{A^p_{\alpha}}^p.
\end{eqnarray*}
Therefore, the triangle inequality gives

\begin{eqnarray*}
\norm{ST_\mu-\sum_{j=1}^{\infty} M_{1_{F_j}}ST_{\mu 1_{G_j}}}_{\mathcal{L}(A^p_{\alpha}, L^p_{\alpha})} &\leq&
\norm{S-S_0}_{\mathcal{L}(A^p_{\alpha}, A^p_{\alpha})} \norm{T_\mu}_{\mathcal{L}(A^p_{\alpha}, A^p_{\alpha})} + \epsilon \\
&+&
\norm{\sum_{j=1}^{\infty} M_{1_{F_j}}\left(S-S_0\right)T_{\mu 1_{G_j}}}_{\mathcal{L}(A^p_{\alpha}, L^p_{\alpha})}
\lesssim \epsilon ,
\end{eqnarray*}
where the constant of the last inequality depends only on $\norm{T_\mu}_{\mathcal{L}(A^p_{\alpha}, A^p_{\alpha})}$ and $n$.
\end{proof}

\section{A uniform Algebra and Its Maximal Ideal Space}
\label{UniformAlg}

We consider the algebra $\mathcal{A}$ of all bounded functions that are uniformly continuous from the metric space $(\B_n,\rho)$
into the metric space $(\C,\abs{\,\cdot\,})$.  We then associate to $\mathcal{A}$ its maximal ideal space $M_{\mathcal{A}}$,
which is the set of all non-zero multiplicative linear functionals from $\mathcal{A}$ to $\C$.
Endowed with the weak-star topology, this is then a compact Hausdorff space.
Via the Gelfand transform we can view the elements of $\mathcal{A}$ as continuous functions on $M_{\mathcal{A}}$
as given by $\hat{a}(f)=f(a)$, where $f$ is a multiplicative linear functional.  Since $\mathcal{A}$ is a commutative $C^*$ algebra, the Gelfand transform
is an isomorphism.  It is also obvious that point evaluation is a multiplicative linear functional, and so
$\B_n\subset M_{\mathcal{A}}$.  Moreover, since $\mathcal{A}$ is a $C^*$ algebra, $\B_n$ is dense in $M_{\mathcal{A}}$.
Also, one can easily see that the Euclidean topology on $\B_n$ agrees with the topology induced by $M_{\mathcal{A}}$.

We next state several lemmas and facts that will be useful going forward.  Their proof can be found in \cite{Sua}.
For a set $E\subset M_{\mathcal{A}}$, the closure of $E$ in the space $M_{\mathcal{A}}$ is denoted $\overline{E}$.
Note that if $E\subset r\B_n$, where $0<r<1$, then this closure is the same as the Euclidean closure.

\begin{lm}
\label{Invariance}
Let $z,w,\xi\in\B_n$.  Then there is a positive constant $C(n)$ such that
$$
\rho(\varphi_z(\xi),\varphi_w(\xi))\lesssim \frac{\rho(z,w)}{1-\abs{\xi}^2}.
$$
\end{lm}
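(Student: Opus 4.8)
The estimate is only nontrivial when $\rho(z,w)$ is small compared with $1-\abs{\xi}^2$: since $\rho\le 1$ always, if $\rho(z,w)\ge c\,(1-\abs{\xi}^2)$ for a fixed small $c$ then the right-hand side already dominates $1\ge\rho(\varphi_z(\xi),\varphi_w(\xi))$ once the implied constant is at least $1/c$. So I would first reduce to the \emph{local regime} $\rho(z,w)\le\tfrac12(1-\abs{\xi}^2)$, in which $\sqrt{1-\rho(z,w)^2}\approx 1$; this comparison is what ultimately lets the constants close. In this regime, writing $P=\varphi_z(\xi)$ and $Q=\varphi_w(\xi)$, the plan is to estimate $\rho(P,Q)$ through the elementary inequality
\[
\rho(a,b)\le\frac{\abs{a-b}}{\abs{1-\overline{a}b}},
\]
which follows from the M\"obius identity $1-\rho(a,b)^2=\frac{(1-\abs a^2)(1-\abs b^2)}{\abs{1-\overline a b}^2}$ by expanding $\abs{1-\overline a b}^2-(1-\abs a^2)(1-\abs b^2)=\abs{a-b}^2-(\abs a^2\abs b^2-\abs{\overline a b}^2)\le\abs{a-b}^2$ and invoking Cauchy--Schwarz on the last parenthesis. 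Thus everything reduces to bounding the numerator $\abs{P-Q}$ from above and the denominator $\abs{1-\overline P Q}$ from below.

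The denominator is the easy half, and it is the source of the factor $1-\abs{\xi}^2$. Using $\abs{1-\overline P Q}\ge 1-\abs P\abs Q\ge\tfrac12\big((1-\abs P^2)+(1-\abs Q^2)\big)\ge\sqrt{(1-\abs P^2)(1-\abs Q^2)}$ together with the M\"obius identity applied twice, namely $1-\abs{\varphi_z(\xi)}^2=\frac{(1-\abs z^2)(1-\abs\xi^2)}{\abs{1-\overline z\xi}^2}$ and the same with $w$, I obtain $\abs{1-\overline P Q}\gtrsim\frac{(1-\abs\xi^2)\sqrt{(1-\abs z^2)(1-\abs w^2)}}{\abs{1-\overline z\xi}\,\abs{1-\overline w\xi}}$. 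Combined with the displayed inequality this gives $\rho(P,Q)\lesssim\frac{\abs{1-\overline z\xi}\,\abs{1-\overline w\xi}\,\abs{\varphi_z(\xi)-\varphi_w(\xi)}}{(1-\abs\xi^2)\sqrt{(1-\abs z^2)(1-\abs w^2)}}$, so it remains to prove the numerator bound
\[
\abs{\varphi_z(\xi)-\varphi_w(\xi)}\lesssim\frac{\rho(z,w)\sqrt{(1-\abs z^2)(1-\abs w^2)}}{\abs{1-\overline z\xi}\,\abs{1-\overline w\xi}},
\]
with a constant depending only on $n$; feeding this in yields exactly $\rho(P,Q)\lesssim\rho(z,w)/(1-\abs\xi^2)$.

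This numerator bound is the crux, and the difficulty is entirely anisotropic. To see the mechanism I would first treat the disc, where $\varphi_z(\xi)=\frac{z-\xi}{1-\overline z\xi}$ and a direct computation factors the numerator of $\varphi_z(\xi)-\varphi_w(\xi)$ over the common denominator $(1-\overline z\xi)(1-\overline w\xi)$ as $(z-w)(1-\xi\overline z)+\overline{(z-w)}\,\xi(z-\xi)$, whose modulus is $\lesssim\abs{z-w}$ uniformly; since in one variable $\abs{z-w}=\rho(z,w)\abs{1-\overline z w}$ and $\abs{1-\overline z w}\approx\sqrt{(1-\abs z^2)(1-\abs w^2)}$ in the local regime, this is precisely the required estimate. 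In $\B_n$ I would run the same scheme from the explicit formula $\varphi_z(\xi)=\frac{z-P_z\xi-\sqrt{1-\abs z^2}\,Q_z\xi}{1-\overline z\xi}$ (with $P_z$ the orthogonal projection onto $\C z$ and $Q_z=I-P_z$), putting the difference over $(1-\overline z\xi)(1-\overline w\xi)$ and estimating the vector-valued numerator, each term of which vanishes at $z=w$ and so carries a factor controllable by $z-w$. \textbf{The main obstacle} is that for $n\ge 2$ the quantity $\abs{z-w}$ is genuinely larger than $\rho(z,w)\sqrt{(1-\abs z^2)(1-\abs w^2)}$ by the Cauchy--Schwarz slack $\abs z^2\abs w^2-\abs{\overline z w}^2$, so a crude bound by $\abs{z-w}$ is \emph{not} sufficient; one must track the projections $P_z,Q_z$ carefully to recover the sharper anisotropic scale. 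A cleaner and more robust route, which sidesteps this bookkeeping, is to prove the infinitesimal form: subdivide the $\beta$-geodesic from $z$ to $w$, apply the M\"obius triangle inequality $\rho(a,b)\le\frac{\rho(a,c)+\rho(c,b)}{1+\rho(a,c)\rho(c,b)}$, and reduce to bounding the Bergman-metric operator norm of the differential of $z\mapsto\varphi_z(\xi)$ by $C(n)/(1-\abs\xi^2)$, which a short computation based on the formula for $1-\abs{\varphi_z(\xi)}^2$ confirms and which automatically accounts for the anisotropy; integrating then gives the claim.
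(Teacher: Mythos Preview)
The paper itself does not prove this lemma; it states it and cites \cite{Sua} for the proof, so there is no in-text argument to compare against.  That said, let me comment on your proposal on its own merits.

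Your reductions are sound: the passage to the local regime $\rho(z,w)\le\tfrac12(1-|\xi|^2)$ is correct, the inequality $\rho(a,b)\le|a-b|/|1-\overline a b|$ is valid and follows as you say, and the lower bound on the denominator $|1-\overline P Q|$ via $\sqrt{(1-|P|^2)(1-|Q|^2)}$ together with the M\"obius identity is clean and correct.  The one-variable computation is also fine.

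The genuine gap is the last paragraph.  You correctly identify that in $\B_n$ with $n\ge2$ the crude bound by $|z-w|$ fails because of the Cauchy--Schwarz defect $|z|^2|w|^2-|\overline z w|^2$, so the direct expansion from the explicit formula for $\varphi_z$ needs the anisotropic bookkeeping you allude to --- but you do not carry it out.  You then propose the infinitesimal route, asserting that ``a short computation based on the formula for $1-|\varphi_z(\xi)|^2$'' gives the Bergman-metric operator-norm bound $C(n)/(1-|\xi|^2)$ for the differential of $z\mapsto\varphi_z(\xi)$.  This is precisely the heart of the matter and you have not done it: the map $z\mapsto\varphi_z(\xi)$ is not holomorphic in $z$, there is no obvious M\"obius equivariance that reduces the computation to a single point, and the formula for $1-|\varphi_z(\xi)|^2$ controls the \emph{target} metric but says nothing directly about how the differential acts on tangent vectors at $z$.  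The integration-along-a-geodesic step is fine once the infinitesimal bound is in hand, but as written the proposal replaces one unproved estimate (the numerator bound) by another unproved estimate (the differential bound) of essentially the same difficulty.  To close the argument you need either to actually perform the anisotropic expansion of $\varphi_z(\xi)-\varphi_w(\xi)$ tracking the $P_z,Q_z$ terms, or to compute the real differential of $z\mapsto\varphi_z(\xi)$ explicitly and bound it in the Bergman metric; neither is done here.
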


\begin{lm}
\label{Extend}
Let $(E,d)$ be a metric space and $f:\B_n\to E$ be a continuous map.  Then $f$ admits a continuous extension from $M_{\mathcal{A}}$ into $E$ if and only if $f$ is $(\rho,d)$ uniformly continuous  and $\overline{f(\B_n)}$ is compact.
\end{lm}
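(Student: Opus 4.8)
The plan is to prove the two implications separately, using throughout that $\mathcal{A}$, viewed through the Gelfand transform, separates the points of $M_{\mathcal{A}}$ and that every $a\in\mathcal{A}$ is by definition bounded and $\rho$-uniformly continuous.

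\textbf{Necessity.} Assume first that $f$ extends to a continuous map $\tilde f:M_{\mathcal{A}}\to E$. Compactness of $\overline{f(\B_n)}$ should follow immediately: $\tilde f(M_{\mathcal{A}})$ is a compact, hence closed, subset of $E$, and since $\B_n$ is dense in $M_{\mathcal{A}}$ it must equal $\overline{f(\B_n)}$. For uniform continuity I would argue by contradiction. Supposing $f$ is not $(\rho,d)$-uniformly continuous, I would pick $\epsilon>0$ and sequences $z_k,w_k\in\B_n$ with $\rho(z_k,w_k)\to 0$ but $d(f(z_k),f(w_k))\ge\epsilon$. Viewing $(z_k,w_k)$ as a net in the compact space $M_{\mathcal{A}}\times M_{\mathcal{A}}$ (ordinary sequences will not do, as $M_{\mathcal{A}}$ need not be metrizable), I would pass to a convergent subnet with limit $(x,y)$. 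For each $a\in\mathcal{A}$ the extension $\hat a$ is continuous and $a$ is $\rho$-uniformly continuous, so $\hat a(x)-\hat a(y)=\lim\left[a(z_{k_\alpha})-a(w_{k_\alpha})\right]=0$; since $\mathcal{A}$ separates the points of $M_{\mathcal{A}}$ this forces $x=y$, and then continuity of $\tilde f$ gives $d(f(z_{k_\alpha}),f(w_{k_\alpha}))\to d(\tilde f(x),\tilde f(y))=0$, contradicting the lower bound.

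\textbf{Sufficiency.} Now assume $f$ is $(\rho,d)$-uniformly continuous and that $L:=\overline{f(\B_n)}$ is compact. The idea is to manufacture the extension through the algebra $C(L)$. First I would check that for each $a\in C(L)$ the composition $a\circ f$ lies in $\mathcal{A}$: it is bounded because $a$ is bounded on the compact set $L$, and it is $\rho$-uniformly continuous because the uniform continuity of $f$ composes with the (automatic) uniform continuity of $a$ on the compact metric space $L$. Hence $a\circ f$ admits a continuous extension $\widehat{a\circ f}\in C(M_{\mathcal{A}})$. Fixing $x\in M_{\mathcal{A}}$ and using that $x$ is multiplicative together with the identity $(ab)\circ f=(a\circ f)(b\circ f)$, I would observe that $a\mapsto\widehat{a\circ f}(x)=x(a\circ f)$ is a unital multiplicative linear functional on $C(L)$. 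Since $L$ is compact Hausdorff, every such functional is evaluation at a unique point, so there is a well-defined $\tilde f(x)\in L$ with $\widehat{a\circ f}(x)=a(\tilde f(x))$ for all $a\in C(L)$, and this produces a map $\tilde f:M_{\mathcal{A}}\to L\subset E$.

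Finally I would check that $\tilde f$ really extends $f$ and is continuous. For $z\in\B_n$ one has $a(\tilde f(z))=\widehat{a\circ f}(z)=a(f(z))$ for every $a\in C(L)$, so $\tilde f(z)=f(z)$ because $C(L)$ separates points of $L$. Continuity follows because $a\circ\tilde f=\widehat{a\circ f}$ is continuous on $M_{\mathcal{A}}$ for every $a\in C(L)$, and on the compact Hausdorff space $L$ the given topology coincides with the weak topology generated by $C(L)$. I expect the sufficiency direction to be the main obstacle: the crucial use of compactness of $\overline{f(\B_n)}$ is precisely what allows the candidate extension to be realized as an actual point of $E$ via the identification of characters of $C(L)$ with points of $L$, while the role of uniform continuity is to ensure that the compositions $a\circ f$ land in $\mathcal{A}$ in the first place.
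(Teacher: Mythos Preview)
Your argument is correct. The paper does not actually prove this lemma; it only states it and refers to \cite{Sua} for the proof, so there is nothing to compare against in the present paper. Your necessity argument is clean: the key point, that any two subnet limits $x,y$ must agree because every $a\in\mathcal{A}$ is $\rho$-uniformly continuous and the Gelfand transforms separate points of $M_{\mathcal{A}}$, is exactly what is needed. The sufficiency argument via characters of $C(L)$ is a nice and efficient route: pushing the problem forward through the $\ast$-homomorphism $C(L)\to\mathcal{A}$, $a\mapsto a\circ f$, and then invoking that the maximal ideal space of $C(L)$ is $L$ itself, neatly isolates the respective roles of the two hypotheses (uniform continuity puts $a\circ f$ into $\mathcal{A}$; compactness of $L$ identifies the induced character with a point of $E$). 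One small cosmetic remark: in the continuity step you might say explicitly that a map into a compact Hausdorff space $L$ is continuous as soon as its composition with every $a\in C(L)$ is continuous, which is precisely the statement that the given topology on $L$ agrees with its weak $C(L)$-topology; you state this, but it is the only place a reader might pause.
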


Let $x\in M_{\mathcal{A}}$ and suppose that $\{z_\omega\}$ is a net in $\B_n$ converging to $x$.
By compactness, the net $\{\varphi_{z_s}\}$ in the product space $M^{\B_n}_{\mathcal{A}}$ admits a convergent subnet
$\{\varphi_{z_{\omega_{\tau}}}\}$.
That is, there is a function $\varphi:\B_n\to M_{\mathcal{A}}$ such that $f\circ \varphi_{z_{\omega_{\tau}}}\to f\circ\varphi$.  Moreover, it can be shown that the whole net $\{z_\omega\}$ converges to $x$, and that $\varphi$ does not depend on the net.  We then denote the limit by $\varphi_x$ and one can easily observe that $\varphi_x(0)=x$.  This gives the following Lemma.

\begin{lm}
\label{Maximal3}
Let $\{z_\alpha\}$ be a net in $\B_n$ converging to $x\in M_{\mathcal{A}}$.  Then
\begin{itemize}
\item[(i)] $a\circ\varphi_x\in\mathcal{A}$ for every $a\in\mathcal{A}$.  In particular, $\varphi_x:\B_n\to M_{\mathcal{A}}$ is continuous;
\item[(ii)] $a\circ\varphi_{z_{\omega}}\to a\circ\varphi_x$ uniformly on compact sets of $\B_n$ for every $a\in\mathcal{A}$.
\end{itemize}
\end{lm}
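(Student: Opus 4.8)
The plan is to base everything on a single soft fact: since the pseudohyperbolic metric $\rho$ is invariant under each automorphism $\varphi_z$, the family $\{a\circ\varphi_z:z\in\B_n\}$ is uniformly bounded and uniformly equicontinuous in the $\rho$-metric, with a modulus of continuity depending only on $a$. Concretely, fix $a\in\mathcal{A}$ and set $\eta_a(t):=\sup\{|a(u)-a(v)|:\rho(u,v)\le t\}$; because $a$ is $\rho$-uniformly continuous we have $\eta_a(t)\to 0$ as $t\to 0^+$, and for all $z,w,w'\in\B_n$,
\[
|a(\varphi_z(w))-a(\varphi_z(w'))|\le \eta_a\big(\rho(\varphi_z(w),\varphi_z(w'))\big)=\eta_a(\rho(w,w')),
\]
the last equality being the invariance of $\rho$ under $\varphi_z$ recorded in the Preliminaries. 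I would combine this with the one fact already built into the construction of $\varphi_x$: for each fixed $w$, $\varphi_{z_\omega}(w)\to\varphi_x(w)$ in $M_{\mathcal{A}}$, so, applying the continuous Gelfand extension $\hat a$ and using $\hat a|_{\B_n}=a$, we obtain the pointwise convergence $a(\varphi_{z_\omega}(w))\to (a\circ\varphi_x)(w)$.

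For part (i), I would pass to the limit in the displayed modulus estimate. Since $(a\circ\varphi_x)(w)=\lim_\omega a(\varphi_{z_\omega}(w))$ for every $w$, taking limits in $|a(\varphi_{z_\omega}(w))-a(\varphi_{z_\omega}(w'))|\le \eta_a(\rho(w,w'))$ yields $|(a\circ\varphi_x)(w)-(a\circ\varphi_x)(w')|\le \eta_a(\rho(w,w'))$, so $a\circ\varphi_x$ is $\rho$-uniformly continuous; it is bounded by $\|a\|_\infty$ because $\hat a$ is. Hence $a\circ\varphi_x\in\mathcal{A}$. The continuity of $\varphi_x:\B_n\to M_{\mathcal{A}}$ is then automatic: $M_{\mathcal{A}}$ carries the weak-star topology, which is the initial topology generated by the functions $\hat a$, so a map into $M_{\mathcal{A}}$ is continuous precisely when its composition with every $\hat a$ is continuous, and we have just shown that each such composition lies in $\mathcal{A}\subset C(\B_n)$.

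For part (ii), the pointwise convergence is already in hand, so the task is to upgrade it to uniform convergence on compact sets by means of the equicontinuity. I would run the standard three-$\epsilon$ argument: given a compact $K\subset\B_n$ and $\epsilon>0$, choose $\delta>0$ with $\eta_a(\delta)<\epsilon$, so that every function $a\circ\varphi_{z_\omega}$ and the limit $a\circ\varphi_x$ oscillate by less than $\epsilon$ on any $\rho$-ball of radius $\delta$; cover $K$ by finitely many such balls centered at $w_1,\dots,w_M$; use the pointwise convergence at the finitely many centers to find a single tail of the net on which $|a(\varphi_{z_\omega}(w_i))-(a\circ\varphi_x)(w_i)|<\epsilon$ for all $i$; and combine the three estimates to bound $|a(\varphi_{z_\omega}(w))-(a\circ\varphi_x)(w)|$ by $3\epsilon$ for every $w\in K$.

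The arguments are essentially soft, and the only point requiring genuine care is that we work with nets rather than sequences, so the classical Arzel\`a--Ascoli theorem does not apply verbatim. This causes no real difficulty: the three-$\epsilon$ argument only uses convergence at the finitely many centers $w_1,\dots,w_M$, and for finitely many convergent coordinate-nets the directedness of the index set produces a common tail on which all of them are simultaneously within $\epsilon$ of their limits. A secondary bookkeeping point, worth making explicit, is the distinction between $a\circ\varphi_{z_\omega}$ (composition with $a$ on $\B_n$) and $a\circ\varphi_x$ (composition with the Gelfand extension $\hat a$ on $M_{\mathcal{A}}$); the two are tied together precisely through the continuity of $\hat a$ together with $\hat a|_{\B_n}=a$.
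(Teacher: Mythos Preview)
The paper does not actually prove this lemma; it states the result and refers the reader to \cite{Sua} for the proof. Your argument is correct and is the natural one: the invariance of $\rho$ under each $\varphi_z$ gives the family $\{a\circ\varphi_z\}$ a common modulus of continuity $\eta_a$, the construction of $\varphi_x$ immediately yields pointwise convergence $a(\varphi_{z_\omega}(w))\to(a\circ\varphi_x)(w)$ via the continuity of $\hat a$ on $M_{\mathcal{A}}$, and passing to the limit in the modulus inequality gives (i) while the equicontinuity plus a finite-cover three-$\epsilon$ argument gives (ii). Your remarks on the net-versus-sequence issue and on distinguishing $a$ from its Gelfand extension $\hat a$ are well placed; these are the only points where carelessness could cause trouble, and you handle them correctly.
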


\subsection{Carleson Measures and Approximation}

Given a complex-valued measure $\mu$ whose variation is Carleson,
our next goal is to construct a sequence of functions $B_k(\mu) \in \mathcal{A}$ such that $T_{B_k(\mu)} \rightarrow T_\mu$ in the norm of
$\mathcal{L}(A_\alpha^p, A_\alpha^p)$ for any $\alpha >-1$ and $1<p<\infty$.
As a consequence, we have the following Theorem.
\begin{thm}
\label{Sua2Thm}
The Toeplitz algebra $\mathcal{T}_{p,\alpha}$ equals the closed algebra generated by $\{T_a:a\in\mathcal{A}\}$.
\end{thm}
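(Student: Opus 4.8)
The plan is to prove the two inclusions separately, with essentially all of the work going into one of them. Since $\mathcal{A}\subset L^\infty$, every generator $T_a$ with $a\in\mathcal{A}$ already lies in $\mathcal{T}_{p,\alpha}$, and $\mathcal{T}_{p,\alpha}$ is a closed algebra; hence the closed algebra generated by $\{T_a:a\in\mathcal{A}\}$ is contained in $\mathcal{T}_{p,\alpha}$. For the reverse inclusion it suffices, since $\mathcal{T}_{p,\alpha}$ is by definition the smallest closed algebra containing every $T_a$ with $a\in L^\infty$, to show that each such $T_a$ belongs to the closed algebra generated by $\{T_b:b\in\mathcal{A}\}$. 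Writing $d\mu(w)=a(w)\,dv_\alpha(w)$, we have $T_a=T_\mu$, and $d|\mu|=|a|\,dv_\alpha\leq\norm{a}_{L^\infty}dv_\alpha$ is a Carleson measure. Thus the whole theorem reduces to the approximation claim announced before the statement: for every complex measure $\mu$ with $|\mu|$ Carleson there exist $B_k(\mu)\in\mathcal{A}$ with $\norm{T_{B_k(\mu)}-T_\mu}_{\mathcal{L}(A^p_\alpha,A^p_\alpha)}\to 0$. Granting this, each $T_{B_k(\mu)}$ is one of the generators and $T_a=T_\mu$ is a norm limit of them, so it lies in the closed algebra, as needed. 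Decomposing $\mu=\mu_1-\mu_2+i\mu_3-i\mu_4$ as in the remark following Lemma \ref{CM}, we may moreover assume $\mu\geq 0$.

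The natural candidate for $B_k(\mu)$ is a hyperbolic mollification of the ``density'' of $\mu$ at a small scale $r_k\to 0$. Fixing a smooth bump $\psi_k$ supported in a $\beta$-ball of radius $r_k$, I would set
$$
B_k(\mu)(z)=\frac{\displaystyle\int_{\B_n}\psi_k\!\left(\beta(z,w)\right)d\mu(w)}{\displaystyle\int_{\B_n}\psi_k\!\left(\beta(z,w)\right)dv_\alpha(w)};
$$
equivalently one discretizes with the lattice $\{w_m\}$, $\{D_m\}$ of Lemma \ref{StandardGeo} at scale $\varrho=r_k$ and forms $\sum_m\frac{\mu(D_m)}{v_\alpha(D_m)}\,\phi_m$ for a $\beta$-Lipschitz partition of unity $\{\phi_m\}$ subordinate to the covering. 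In either case $B_k(\mu)$ is bounded by $\norm{\mu}_{\textnormal{Geo}}\approx\norm{T_\mu}_{\mathcal{L}(A^p_\alpha,A^p_\alpha)}$ by Lemma \ref{CM}, and it is $\rho$-uniformly continuous: the smoothness of the mollifier together with the M\"obius invariance of $\beta$ (Lemma \ref{Invariance}) furnishes a location-independent modulus of continuity. Hence $B_k(\mu)\in\mathcal{A}$, and by Lemma \ref{Extend} it extends continuously to $M_{\mathcal{A}}$.

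The heart of the argument --- and the step I expect to be the main obstacle --- is the operator-norm convergence $\norm{T_\mu-T_{B_k(\mu)}}\to 0$. The difference is the Toeplitz-type operator $T_{\mu-\nu_k}$ with $d\nu_k=B_k(\mu)\,dv_\alpha$, and the scale $r_k$ is chosen so that $\mu-\nu_k$ has (approximately) vanishing mass on each cell $D_m$. The point is that a naive bound through the total variation $|\mu-\nu_k|$ only recovers $\norm{T_\mu-T_{B_k(\mu)}}\lesssim\norm{\mu}_{\textnormal{Geo}}$ and destroys exactly the cancellation one needs. The remedy is to exploit that cancellation. For the exact indicator approximation $d\tilde\nu_k=\sum_m\frac{\mu(D_m)}{v_\alpha(D_m)}1_{D_m}\,dv_\alpha$ one has $(\mu-\tilde\nu_k)(D_m)=0$, so the value of the integrand at the center $w_m$ may be subtracted freely:
$$
T_{\mu-\tilde\nu_k}f(z)=\sum_m\int_{D_m}\left[\frac{f(w)}{(1-\overline{w}z)^{n+1+\alpha}}-\frac{f(w_m)}{(1-\overline{w}_mz)^{n+1+\alpha}}\right]d(\mu-\tilde\nu_k)(w).
$$
The bracket is bounded by the oscillation of $w\mapsto f(w)(1-\overline{w}z)^{-(n+1+\alpha)}$ over a $\beta$-ball of radius $r_k$; using the holomorphy of $f$ (a Schwarz--Pick / sub-mean-value estimate for $|f|$ on hyperbolic balls) and the hyperbolic-Lipschitz smoothness of the kernel, this oscillation carries a gain of $r_k$ over the pointwise kernel size. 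Feeding the resulting kernel into Schur's Lemma \ref{Schur} with the test function $h(z)=(1-\abs{z}^2)^{-1/(pq)}$, exactly as in the proofs of Lemmas \ref{Tech1} and \ref{Tech2}, yields
$$
\norm{T_\mu-T_{B_k(\mu)}}_{\mathcal{L}(A^p_\alpha,A^p_\alpha)}\lesssim r_k\,\norm{\mu}_{\textnormal{Geo}}\xrightarrow[k\to\infty]{}0 .
$$
The delicate bookkeeping is reconciling the per-cell mean-zero cancellation, which is clean only for the indicator approximation $\tilde\nu_k$, with the smoothness forced by membership in $\mathcal{A}$; I would handle this by comparing the smooth $\nu_k$ to $\tilde\nu_k$ and controlling the resulting boundary-layer error through the bounded overlap $N(n)$ of the covering, which contributes only a harmless constant. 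Combining the three paragraphs completes the proof.
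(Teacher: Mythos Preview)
Your reduction is exactly the paper's: one inclusion is trivial, and for the other it suffices to approximate $T_\mu$ in operator norm by $T_{B_k(\mu)}$ with $B_k(\mu)\in\mathcal{A}$ whenever $|\mu|$ is Carleson. From that point on your approximation scheme diverges from the paper's. The paper does \emph{not} mollify on shrinking hyperbolic balls; instead it takes the global $(k,\alpha)$-Berezin transform
\[
B_k(\mu)(z)=\frac{c_k}{c_\alpha}\int_{\B_n}\frac{(1-|\varphi_z(w)|^2)^{n+1+k}}{(1-|w|^2)^{n+1+\alpha}}\,d\mu(w),
\]
shows in Lemma~\ref{bo-lip} that this is bounded and $\rho$-Lipschitz (hence in $\mathcal{A}$), and then proves convergence by duality: for $h=f\overline{g}$ with $f\in A^p_\alpha$, $g\in A^q_\alpha$, one estimates $\bigl|\int B_k(\mu)h\,dv_\alpha-\int h\,d\mu\bigr|$ via the adjoint $B_{k,r}^\ast$ and the fact that $\int_{\B_n}|u|\,dv_k(u)\to 0$ as $k\to\infty$ (the probability measure $dv_k$ concentrates at $0$). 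The invariant gradient bound $\|\widetilde\nabla f\|_{L^p_\alpha}\lesssim\|f\|_{A^p_\alpha}$ is what converts the oscillation into a small constant. There is no cell-by-cell cancellation and no indicator/smooth comparison; the smallness comes entirely from the parameter $k\to\infty$.

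Your scheme is plausible but the sketch has a gap at the Schur step. After subtracting centers the integrand is $r_k$ times something of size $\,Mf(w)\cdot|1-\overline{w}z|^{-(n+1+\alpha)}$, where $Mf$ is a hyperbolic local maximal function of $f$ (the oscillation $|f(w)-f(w_m)|$ is controlled by $r_k$ times $\sup_{D(w_m,1)}|f|$, not by $r_k|f(w_m)|$). This is \emph{not} a fixed kernel acting on $f$, so Schur's Lemma with the test function $(1-|z|^2)^{-1/(pq)}$ does not apply ``exactly as in Lemmas~\ref{Tech1}--\ref{Tech2}''. What actually closes the argument is: apply Schur (or the Carleson property) to the fixed kernel $|1-\overline{w}z|^{-(n+1+\alpha)}$ against the measure $|\mu|+\tilde\nu_k$, and separately show $\|Mf\|_{L^p(|\mu|)}\lesssim\|f\|_{A^p_\alpha}$ via subharmonicity of $|f|^p$. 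That is essentially the role played by the invariant gradient estimate in the paper's proof. Your smooth-versus-indicator reconciliation is also more delicate than ``bounded overlap'': $\|T_{\nu_k}-T_{\tilde\nu_k}\|$ does not go to zero just because the covering has overlap $N(n)$; you again need the per-cell cancellation (or a further oscillation argument) to see any smallness there.
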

For $\alpha=0$ this was proved in \cite{Sua}*{Thm.$\,$7.3}.  Here we give a more direct and quantitative proof.
We remark that an additional proof can be given by building on the ideas in \cites{Sua, Sua2}.
Recall that for $\alpha > -1$, 
$$
dv_\alpha(z) := c_\alpha  \, (1-|z|^2)^\alpha \, dv(z),
\ \mbox{ with }\
c_\alpha := \frac{\Gamma(n+\alpha+1)}{n! \,\Gamma(\alpha+1)},
$$
is a probability measure on $\B_n$. Let $\mu$ be a complex-valued, Borel, regular measure on $\B_n$ of finite total variation.  If $z\in\B_n$, $k\ge \alpha$, and $\alpha>-1$, the $(k,\alpha)$-Berezin transform of $\mu$ is the function
$$
B_k(\mu)(z) := \frac{c_k }{c_\alpha }  \, \int_{\B_n} \frac{
(1-|\varphi_z(w)|^2)^{n+1+k} }{ (1-\abs{w}^2)^{n+1+\alpha} }  \, d\mu(w). \\
$$

When $\mu = a\, dv_\alpha$, with $a\in L^1_\alpha$, a change of variables gives $B_k(a\,dv_\alpha) = \int_{\B_n} (a\circ\varphi_z)\,  dv_k$.  Also, observe that if $|\mu|$ is a Carleson measure then $B_0 (\mu) = B(T_\mu)$, the Berezin transform of $T_\mu$.  Finally, observe that for $k\ge \alpha$,
\begin{align*}
|B_k(\mu)(z)| &\le \frac{c_k }{c_\alpha }  \, \int_{\B_n}
\frac{(1-|\varphi_z(w)|^2)^{n+1+\alpha} }{ (1-|w|^2)^{n+1+\alpha} } \, 
d|\mu|(w)
=  \frac{c_k }{c_\alpha }  \,  B_\alpha(|\mu|)(z).
\end{align*}

\begin{lm}
\label{bo-lip}
If $|\mu|$ is a $A^p_\alpha$ Carleson measure and $k\ge \alpha$,
then $B_k(\mu)$ is a bounded Lipschitz function from $(\B_n, \rho)$
into $(\mathbb{C}, \abs{\,\cdot\,})$. Specifically, there are constants depending on $n, k$ and $\alpha$ such that
$$
|B_k(\mu)(z)| \lesssim  \,  B_\alpha(|\mu|)(z) \ \ \mbox{ and }\ \
|B_{k}(\mu) (z_1) - B_{k}(\mu) (z_2)| \lesssim    \,
\norm{B_{\alpha}(|\mu|)}_\infty  \, \rho(z_1,z_2).
$$
\end{lm}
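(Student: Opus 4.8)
The plan is to prove the two asserted inequalities separately, the first being essentially immediate and the second carrying the real content. The boundedness bound $|B_k(\mu)(z)| \lesssim B_\alpha(|\mu|)(z)$ is already recorded in the computation preceding the lemma: since $k \ge \alpha$ and $1 - |\varphi_z(w)|^2 \le 1$, one has $(1-|\varphi_z(w)|^2)^{n+1+k} \le (1-|\varphi_z(w)|^2)^{n+1+\alpha}$ pointwise, and integrating against $d|\mu|$ gives the claim with constant $c_k/c_\alpha$. Moreover, the Möbius identity $1-|\varphi_z(w)|^2 = (1-|z|^2)(1-|w|^2)/|1-\overline z w|^2$ identifies $B_\alpha(|\mu|)(z)$ with the reproducing-kernel quantity $\int (1-|z|^2)^{n+1+\alpha}|1-\overline z w|^{-2(n+1+\alpha)}\,d|\mu|(w)$, so $\|B_\alpha(|\mu|)\|_\infty = \norm{\mu}_{\textnormal{RKM}}$ is finite by the Carleson hypothesis and Lemma \ref{CM}; in particular $B_k(\mu)$ is well defined and bounded.

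For the Lipschitz bound I would first dispose of the case $\rho(z_1,z_2) \ge \frac12$: there the boundedness estimate gives $|B_k(\mu)(z_1)-B_k(\mu)(z_2)| \le |B_k(\mu)(z_1)| + |B_k(\mu)(z_2)| \lesssim \|B_\alpha(|\mu|)\|_\infty \le 2\|B_\alpha(|\mu|)\|_\infty \,\rho(z_1,z_2)$, so the inequality is automatic. The substantive case is $\rho(z_1,z_2) \le \frac12$, which I would reduce to a uniform gradient bound by an automorphism change of variable. Set $\zeta := \varphi_{z_1}(z_2)$, so $|\zeta| = \rho(z_1,z_2) \le \frac12$ and $\varphi_{z_1}(\zeta) = z_2$ by the involution property, and define $g(\zeta) := B_k(\mu)(\varphi_{z_1}(\zeta))$, so that $g(0) = B_k(\mu)(z_1)$ and $g(\zeta) = B_k(\mu)(z_2)$. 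Invariance of $\rho$ under the automorphism group, combined with $\varphi_{z_1}\circ\varphi_{z_1} = \textnormal{id}$, yields $|\varphi_{\varphi_{z_1}(\zeta)}(w)| = |\varphi_\zeta(\varphi_{z_1}(w))|$; writing $v := \varphi_{z_1}(w)$, the integrand defining $g$ becomes $(1-|\varphi_\zeta(v)|^2)^{n+1+k}(1-|w|^2)^{-(n+1+\alpha)}$.

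The heart of the argument, and the step I expect to be the main obstacle, is a uniform bound on $\nabla_\zeta g$ over $|\zeta| \le \frac12$. Using the Möbius identity to write $1-|\varphi_\zeta(v)|^2 = (1-|\zeta|^2)(1-|v|^2)|1-\overline\zeta v|^{-2}$ and differentiating in $\zeta$, the key observation is that for $|\zeta| \le \frac12$ the factor $|1-\overline\zeta v|$ stays bounded below by $\frac12$ uniformly in $v \in \B_n$, so $1-|\varphi_\zeta(v)|^2 \approx 1-|v|^2$ and $|\nabla_\zeta(1-|\varphi_\zeta(v)|^2)| \lesssim (1-|v|^2)$ with absolute constants. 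Hence $|\nabla_\zeta (1-|\varphi_\zeta(v)|^2)^{n+1+k}| \lesssim (1-|v|^2)^{n+1+k} \le (1-|v|^2)^{n+1+\alpha}$, where the last step again uses $k \ge \alpha$. Differentiating under the integral sign is justified because $|\mu|$ is finite and, for fixed $z_1$, both the integrand and its $\zeta$-gradient are dominated by a constant (depending on $z_1$) times $(1-|w|^2)^{k-\alpha} \le 1$. After undoing $v = \varphi_{z_1}(w)$ one recognizes $\int (1-|v|^2)^{n+1+\alpha}(1-|w|^2)^{-(n+1+\alpha)}\,d|\mu|(w) = B_\alpha(|\mu|)(z_1)$, giving $\sup_{|\zeta|\le 1/2} |\nabla_\zeta g(\zeta)| \lesssim B_\alpha(|\mu|)(z_1) \le \|B_\alpha(|\mu|)\|_\infty$ uniformly in $z_1$. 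The fundamental theorem of calculus along the segment from $0$ to $\zeta$ then yields $|g(\zeta)-g(0)| \le |\zeta|\,\sup_{|\xi|\le 1/2}|\nabla_\zeta g(\xi)| \lesssim \rho(z_1,z_2)\,\|B_\alpha(|\mu|)\|_\infty$, which is precisely the desired estimate in this range and completes the proof.
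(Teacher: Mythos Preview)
Your proof is correct, but it takes a genuinely different route from the paper's. The paper avoids any case split and any gradient computation: it applies Lagrange's theorem to $x\mapsto x^{s}$ with $s=n+1+k$ to bound
\[
\big|(1-|\varphi_{z_1}(w)|^2)^{s}-(1-|\varphi_{z_2}(w)|^2)^{s}\big|
\le 2s\,(1-|\varphi_{z_i}(w)|^2)^{s-1}\,\big||\varphi_{z_1}(w)|-|\varphi_{z_2}(w)|\big|,
\]
where $i$ is the index realizing the smaller of $|\varphi_{z_1}(w)|,|\varphi_{z_2}(w)|$, and then invokes the strengthened triangle inequality for the pseudohyperbolic metric,
\[
\big|\rho(z_1,w)-\rho(z_2,w)\big|\le \rho(z_1,z_2)\,\big(1-\rho(z_1,w)\rho(z_2,w)\big),
\]
to absorb the remaining factor and produce $(1-|\varphi_{z_i}(w)|^2)^{s}\,\rho(z_1,z_2)$ directly. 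Integrating pointwise over $w$ then gives the Lipschitz bound in one stroke, with no restriction on $\rho(z_1,z_2)$.

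Your approach trades this metric identity for an automorphism pull-back and a uniform gradient estimate on $\{|\zeta|\le \tfrac12\}$, at the cost of a separate trivial case when $\rho(z_1,z_2)\ge\tfrac12$. The advantage of the paper's argument is brevity and that it works globally without localizing; the advantage of yours is that it does not rely on knowing the sharp form of the triangle inequality for $\rho$, only on the elementary lower bound $|1-\overline\zeta v|\ge\tfrac12$ for $|\zeta|\le\tfrac12$, which makes the dependence of the constants completely explicit.
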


\begin{proof}
The first estimate is immediate from the definitions, and so we turn to the second.  Let $z_1,z_2\in \B_n$, and assume first that $w\in\B_n$ is such that $\abs{\varphi_{z_1}(w)} \leq  \abs{\varphi_{z_2}(w)}$.  Applying Lagrange's Theorem to the function $f(x) = x^s$ for $0\le x\le 1$ and $s >1$ yields
\begin{align*}
|(1-|\varphi_{z_1}(w)|^2)^{s} - (1-|\varphi_{z_2}(w)|^2)^{s}|
&\le 2s \, (1-|\varphi_{z_1}(w)|^2)^{s-1}
 \left| |\varphi_{z_1}(w)| -  |\varphi_{z_2}(w)|    \right| \\*[1mm]
&\hspace{-40mm}\le  2s \, (1-|\varphi_{z_1}(w)|^2)^{s-1}
\rho(z_1,z_2)\left(1-|\varphi_{z_1}(w)| \, |\varphi_{z_2}(w)| \right)   \\*[1mm]
&\hspace{-40mm}\le  2s \, \left(1-|\varphi_{z_1}(w)|^2\right)^{s}  \rho(z_1,z_2) ,
\end{align*}
where the inequality in the middle uses a well-known stronger version of the triangle inequality for the metric $\rho$.
If $w\in\B_n$ is such that $\abs{\varphi_{z_2}(w)} \leq\abs{\varphi_{z_1}(w)}$ we get a symmetric inequality.
Thus, for $k\ge \alpha$, taking $s = n+1+k$, there are constants depending on $k, n$ and $\alpha$  such that
$$
|B_{k}(\mu) (z_1) - B_{k}(\mu) (z_2)| \lesssim  \big[ B_{k}(|\mu|) (z_1)
+  B_{k}(|\mu|) (z_2) \big] \, \rho(z_1,z_2) \lesssim   \,
\|B_{\alpha}(|\mu|)\|_\infty  \, \rho(z_1,z_2).
$$
\end{proof}

For the next lemma, we will need truncated versions of $B_k$ and a corresponding adjoint.   To define these operators, if $0<r< 1$, let
$$
B_{k,r}(\mu)(z) :=\frac{c_k }{c_\alpha }   \, \int_{|\varphi_z(w)|<r}
\frac{ (1-|\varphi_z(w)|^2)^{n+1+k} }{ (1-|w|^2)^{n+1+\alpha} }  \, d\mu(w)
$$
and for $h\in L^1_\alpha$,
$$
B_{k,r}^\ast(h)(w) := \frac{c_k }{c_\alpha }   \,
\int_{|\varphi_w(z)|<r} \frac{ (1-|\varphi_z(w)|^2)^{n+1+k} }{ (1-|w|^2)^{n+1+\alpha} }   \, h(z)  \,dv_\alpha(z).
$$
Since the measure $\frac{dv_\alpha(w)}{(1-|w|^2)^{n+1+\alpha}}$ is conformally invariant and $dv_k= c_k (1-|u|^2)^k \,dv$ is a probability measure,
it follows immediately that $B_{k,r}^\ast$ is a contraction on $L^1_\alpha$.
The change of variables $\varphi_w(z)=u$ leads to
\begin{equation}
\label{adju}
B_{k,r}^\ast (h)(w) =  \int_{|u|<r}  \frac{ (1-|u|^2)^{n+1+\alpha} }{ |1-\overline{u}w|^{2(n+1+\alpha)} }   \, h(\varphi_w(u))  \,dv_k(u),
\end{equation}
which also shows that $B_{k,r}^\ast$ acts on $L^\infty $ with norm bounded by some positive constant $C(\alpha, n, r)$.
Furthermore, if $|\mu|$ is a $A^p_\alpha$ Carleson and $h\in L^1_\alpha$, Fubini's theorem yields
\begin{equation}\label{bkdual}
\int_{\B_n} B_{k,r} (\mu)(z) h(z) \,dv_\alpha(z) = \int_{\B_n}
B_{k,r}^\ast (h)(w) \,  d\mu(w).
\end{equation}
Let $f$ be a complex-valued $C^1$ function on $\B_n$. Recall that the gradient of $f$ is defined as
$$
\nabla f = \left(\frac{\partial f}{\partial z_1}, \ldots , \frac{\partial f}{\partial z_n},
\frac{\partial f}{\partial \overline{z}_1}, \ldots , \frac{\partial f}{\partial \overline{z}_n}\right)
$$
and the invariant gradient as
$$
\widetilde{\nabla} f (z) = \nabla \left(f\circ \varphi_z\right) (0).
$$
In \cite{Zhu}*{pp.$\,$49} it is shown that if $f$ is holomorphic on $\B_n$ and $\varphi\in \mbox{Aut} (\B_n)$, then
\begin{equation}\label{zzhu}
\abs{\widetilde{\nabla} \left(f\circ \varphi\right) (z)} = \abs{(\widetilde{\nabla} f)\circ \varphi (z)}
\ \ \mbox{ and }\ \
(1-|z|^2)\abs{\nabla f(z)} \leq \abs{\widetilde{\nabla} f (z)} .\\
\end{equation}

\begin{lm}\label{new1}
Let $f\in A_\alpha^p$ and $g\in A^q_\alpha$, with $\frac{1}{p}+\frac{1}{q}=1$, and write
$h=f\overline{g}$. If $r\leq \frac{1}{4}$, there is a positive constant $C(k)$ independent of $r$ such that $C(k) \rightarrow 0$ when
$k\rightarrow \infty$, and
\begin{equation}\label{bkast}
 B_{k,r}^\ast (|h-h(w)|)(w)  \leq
C(k) \int_{\B_n} \left(|g(\zeta) | \, |\widetilde{\nabla} f(\zeta)|+ |f(\zeta) | \, |\widetilde{\nabla} g(\zeta)| \right)\,
\frac{ (1-|\zeta|^2)^{n+1+\alpha} }{ |1-\overline{\zeta}w |^{2(n+1+\alpha)} }\,dv_\alpha(\zeta).
\end{equation}
\end{lm}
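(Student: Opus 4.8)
The plan is to move to the automorphism‑translated picture, reduce the oscillation of $h$ to a radial integral of a \emph{subharmonic} function, and then use that the spherical means of a subharmonic function increase in order to absorb the part of the kernel that does not decay at the center, while the factor $|u|$ coming from the mean value theorem turns the concentration of $dv_k$ into a constant tending to $0$ with $k$.

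First I would use the change of variables \eqref{adju} to write
\[
B_{k,r}^\ast(|h-h(w)|)(w)=\int_{|u|<r}\frac{(1-|u|^2)^{n+1+\alpha}}{|1-\overline{u}w|^{2(n+1+\alpha)}}\,\big|\Psi(u)-\Psi(0)\big|\,dv_k(u),
\]
where $\Psi:=h\circ\varphi_w=F\overline{G_1}$ with $F:=f\circ\varphi_w$ and $G_1:=g\circ\varphi_w$ holomorphic. Applying the fundamental theorem of calculus along the segment $s\mapsto su$ together with the product rule $|\nabla\Psi|\le |G_1|\,|\nabla F|+|F|\,|\nabla G_1|=:\Theta$ gives $|\Psi(u)-\Psi(0)|\lesssim |u|\int_0^1 \Theta(su)\,ds$. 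Since $|u|<r\le\frac14$, the weight above is bounded by a constant depending on $n,\alpha$, so after introducing polar coordinates $u=t\omega$ and substituting $\tau=st$ I would reduce matters to
\[
B_{k,r}^\ast(|h-h(w)|)(w)\lesssim c_k\int_{S^{2n-1}}\!\int_0^r \Theta(\tau\omega)\,\Psi_k(\tau)\,d\tau\,d\sigma(\omega),\qquad \Psi_k(\tau):=\int_\tau^r t^{2n-1}(1-t^2)^k\,dt .
\]

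The essential observation is that $\Theta$ is subharmonic: each of $|G_1|\,|\nabla F|$ and $|F|\,|\nabla G_1|$ is the modulus of a holomorphic vector field (e.g. $(G_1\partial_j F)_j$), hence subharmonic, and so is their sum. Therefore the spherical means $M(\tau):=\frac1{|S^{2n-1}|}\int_{S^{2n-1}}\Theta(\tau\omega)\,d\sigma(\omega)$ are nondecreasing in $\tau$. This is precisely what controls $\Psi_k$, which does \emph{not} vanish as $\tau\to0$ (in fact $c_k\Psi_k(0)$ stays bounded away from $0$): bounding $M(\tau)\le M(\tfrac14)$ on $[0,r]$ and using $\int_0^r\Psi_k(\tau)\,d\tau=\int_0^r t^{2n}(1-t^2)^k\,dt$ yields
\[
B_{k,r}^\ast(|h-h(w)|)(w)\lesssim M(\tfrac14)\;c_k\int_0^1 t^{2n}(1-t^2)^k\,dt .
\]
Monotonicity of $M$ also gives $M(\tfrac14)\lesssim \int_{|v|<1/2}\Theta(v)\,dv$, and setting $C(k):=c_k\int_0^1 t^{2n}(1-t^2)^k\,dt$, which is of order $k^{-1/2}$ and independent of $r$, produces the desired constant with $C(k)\to0$. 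It then remains to recognize $\int_{|v|<1/2}\Theta\,dv$ as a constant multiple of the right‑hand side of \eqref{bkast}: reversing the change of variables via $\zeta=\varphi_w(v)$ and invoking \eqref{zzhu}, namely $(1-|v|^2)|\nabla F(v)|\le|\widetilde{\nabla} F(v)|=|(\widetilde{\nabla} f)(\varphi_w(v))|$ since $F=f\circ\varphi_w$ is holomorphic, replaces $\Theta(v)$ on $|v|<\tfrac12$ by a quantity $\lesssim |g(\zeta)||\widetilde{\nabla} f(\zeta)|+|f(\zeta)||\widetilde{\nabla} g(\zeta)|$, while the Jacobian $\big((1-|w|^2)/|1-\overline{w}\zeta|^2\big)^{n+1}dv(\zeta)$ matches, up to a constant, the kernel $\frac{(1-|\zeta|^2)^{n+1+\alpha}}{|1-\overline{\zeta}w|^{2(n+1+\alpha)}}\,dv_\alpha(\zeta)$ on $\{|\varphi_w(\zeta)|<\tfrac12\}$, where every factor is comparable to a power of $1-|w|^2$. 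Enlarging the domain of integration to all of $\B_n$ then closes the estimate.

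I expect the main obstacle to be the first step of the third paragraph. The kernel $\Psi_k(\tau)$ carries non‑negligible mass near $\tau=0$, so a naive pointwise comparison with the target weight $t^{2n-1}$ fails—and would genuinely be false for an arbitrary positive $\Theta$ concentrating near $w$. The point that rescues the argument, and the crux of the proof, is that $\Theta$ inherits subharmonicity from the holomorphy of $f$ and $g$, so its spherical means increase and cannot concentrate at the center; only with this in hand does the extra factor $|u|$ convert the concentration of $dv_k$ into a constant $C(k)=O(k^{-1/2})$.
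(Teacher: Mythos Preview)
Your proof is correct and follows the same overall strategy as the paper: pass to $\varphi_w$-coordinates, extract a factor $|u|$ from the oscillation of $h\circ\varphi_w$, use subharmonicity of $\Theta=|G_1|\,|\nabla F|+|F|\,|\nabla G_1|$ to convert a local bound into an integral over $\{|v|<\tfrac12\}$, integrate the remaining $|u|$ against $dv_k$ to produce $C(k)=\int_{\B_n}|u|\,dv_k(u)\to 0$, and undo the change of variables via the invariant gradient identity \eqref{zzhu}. The only real difference is in the execution of the middle two steps. The paper bounds $|\Psi(u)-\Psi(0)|\lesssim |u|\sup_{|\xi|\le 1/4}\Theta(\xi)$ by Lagrange's theorem and then applies the sub-mean-value inequality on Euclidean balls $\{|\varsigma-\xi|<\tfrac14\}\subset\{|\varsigma|<\tfrac12\}$ to dominate that supremum by $\int_{|v|<1/2}\Theta\,dv$; you instead write the oscillation as $|u|\int_0^1\Theta(su)\,ds$ via the fundamental theorem of calculus, pass to polar coordinates, and use the monotonicity of spherical means of the subharmonic function $\Theta$ to get $M(\tau)\le M(\tfrac14)\lesssim\int_{|v|<1/2}\Theta\,dv$. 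Your route stays in integral form throughout and never passes through a supremum, which is arguably a shade cleaner; the paper's route is a touch more elementary in that it only invokes the sub-mean-value property on balls rather than monotonicity in the radius. Either way the resulting $C(k)$ is the same up to a dimensional constant.
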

\begin{proof}
By \eqref{adju}
\begin{eqnarray}\label{bkast3}
 B_{k,r}^\ast \left(\abs{h-h(w)}\right)(w)
& = & \int_{|u|<r}  \frac{ (1-|u|^2)^{n+1+\alpha} }{ |1- \overline{u}w|^{2(n+1+\alpha)} }  |h(\varphi_w(u))-h(w)| \, dv_k(u) \nonumber \\*[1mm]
& \leq &  2^{n+1+\alpha}  \int_{|u|<r}     |h(\varphi_w(u))-h(w)| \, dv_k(u).
\end{eqnarray}
Furthermore, there is a constant $C(n)$ such that
\begin{eqnarray}\label{rba}
|h(\varphi_w(u))-h(w)| &\lesssim &   |u| \, \sup_{|\xi| \le r}  |\nabla (h\circ\varphi_w)(\xi)| \nonumber \\
&\hspace{-55mm}\lesssim & \hspace{-28mm} |u|  \left[  \sup_{|\xi| \le \frac{1}{4}}\  | \overline{(g\circ\varphi_w)}\nabla (f\circ\varphi_w) | +
 \sup_{|\xi|\le \frac{1}{4}}\   | (f\circ\varphi_w)\overline{\nabla (g\circ\varphi_w)} |
\right] .
\end{eqnarray}
Since the functions $(g\circ\varphi_w)\nabla (f\circ\varphi_w)$ and
$(f\circ\varphi_w)\nabla (g\circ\varphi_w)$  are analytic from
$\B_n$ into $\mathbb{C}^{2n}$, each coordinate is subharmonic, and consequently so is the sum of their absolute values. Furthermore, since the $\ell^1$ and $\ell^2$
norms are equivalent on $\mathbb{C}^{2n}$ via constants depending only on $n$, we see that for some constant $C(n)$ and $|\xi|\le \frac{1}{4}$,
\begin{align*}
|(g\circ\varphi_w) \nabla (f\circ\varphi_w) |\, (\xi)  &\lesssim
4^{2n} \int_{|\varsigma-\xi| < \frac{1}{4}} |(g\circ\varphi_w) \nabla (f\circ\varphi_w) | \, dv(\varsigma)\\
&\lesssim
\int_{|\varsigma| < \frac{1}{2}}
\left[ \frac{ 1-|\varsigma|^2 }{ 1-(\frac{1}{4}) }  \right]^{\alpha+1} |(g\circ\varphi_w) \nabla (f\circ\varphi_w) | \, dv(\varsigma)\\*[1mm]
&=  \frac{C(n)}{c_\alpha } \Big(\frac{ 4 }{ 3 }\Big)^{\alpha+1} \int_{|\varsigma| < \frac{1}{2}}  (1-|\varsigma|^2)   |(g\circ\varphi_w) \nabla (f\circ\varphi_w) |
  \, dv_\alpha(\varsigma).
\end{align*}
In the last integral, using that by \eqref{zzhu},
$(1-|\varsigma|^2)   \abs{\nabla (f\circ\varphi_w)  (\varsigma)} \le 
\abs{(\widetilde{\nabla} f) (\varphi_w (\varsigma))}$, we see that there is a constant $C(n,\alpha)$ such that
\begin{eqnarray*}
|(g\circ\varphi_w)\nabla (f\circ\varphi_w) |\, (\xi)  &\lesssim &
 \int_{|\varsigma| < \frac{1}{2}} |(g\circ\varphi_w)(\varsigma)| \,  |(\widetilde{\nabla} f) (\varphi_w (\varsigma)) | \, dv_\alpha(\varsigma)  \\
&=&  \int_{|\varphi_w(\zeta)|<\frac{1}{2}} |g(\zeta) | \, |\widetilde{\nabla} f(\zeta)| \,
\frac{ (1-|w|^2)^{n+1+\alpha} }{ |1- \overline{\zeta}w|^{2(n+1+\alpha)} }\,dv_\alpha(\zeta)  \\
&\lesssim &  \int_{|\varphi_w(\zeta)|<\frac{1}{2}} |g(\zeta) | \, |\widetilde{\nabla} f(\zeta)| \,
\frac{ (1-|\zeta|^2)^{n+1+\alpha} }{ |1-\overline{\zeta}w|^{2(n+1+\alpha)} }\,dv_\alpha(\zeta),  \\
\end{eqnarray*}
where the equality comes from the change of variable $\zeta = \varphi_w(\varsigma)$ and the last inequality
holds because $(1-|w|^2)\le 8(1-|\zeta|^2)$ when ${|\varphi_w(\zeta)|<\frac{1}{2}}$.
Since the same estimate holds when interchanging $f$ and $g$ in \eqref{rba}, we obtain
$$
|h(\varphi_w(u))-h(w)| \lesssim  |u|  \int_{\B_n} \left(|g(\zeta) | \, |\widetilde{\nabla}
f(\zeta)|+ |f(\zeta) | \, |\widetilde{\nabla} g(\zeta)| \right)\, \frac{ (1-|\zeta|^2)^{n+1+\alpha} }{|1-\overline{\zeta}w|^{2(n+1+\alpha)} }\,dv_\alpha(\zeta),
$$
where the implied constant depends on $n$ and $\alpha$.  Note that
$$
\int_{|u|<r} \!\! |u|\, dv_k(u)\leq C(k) = \int_{\B_n} |u| \,dv_k(u) \to 0
$$
as $k\to \infty$.  Inserting this inequality in \eqref{bkast3}, we have another positive constant
$C(n,\alpha)$ such that
\begin{eqnarray*}
 B_{k,r}^\ast (|h-h(w)|)(w)
& \lesssim & C(k) \int_{\B_n} \left(|g(\zeta) | \, |\widetilde{\nabla} f(\zeta)|+ |f(\zeta) | \,
|\widetilde{\nabla} g(\zeta)| \right)\, \frac{ (1-|\zeta|^2)^{n+1+\alpha} }{ |1-\overline{\zeta}w|^{2(n+1+\alpha)} }\,dv_\alpha(\zeta)
\end{eqnarray*}
which proves the Lemma.
\end{proof}

\begin{thm}
\label{new2}
Let $1< p< \infty$, $\alpha>-1$,  $\abs{\mu}$ a $A^p_\alpha$ Carleson measure and $h= f\overline{g}$,
with $f\in A_\alpha^p$ and $g\in A^q_\alpha$. Then
\begin{equation}\label{bkth}
\left|\int_{\B_n} B_k(\mu)(z) h(z) \,dv_\alpha(z) - \int_{\B_n} h(z) \,d\mu(z) \right|
\leq C(k) \norm{B_\alpha(|\mu|)}_\infty \norm{f}_{A^p_\alpha} \norm{g}_{A^q_\alpha} ,
\end{equation}
where $C(k) \rightarrow 0$ as $k\rightarrow \infty$. In particular, when $1<p<\infty$ and $\alpha>-1$, $T_{B_k(\mu)}\rightarrow T_\mu$ when $k\rightarrow \infty$.
\end{thm}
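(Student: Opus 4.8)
The plan is to prove the integral estimate \eqref{bkth} first; the operator convergence then follows immediately. Indeed, for $f\in A_\alpha^p$, $g\in A^q_\alpha$ and $h=f\overline g$ one has $\ip{T_{B_k(\mu)}f}{g}_{A^2_\alpha}=\int_{\B_n}B_k(\mu)h\,dv_\alpha$ (which makes sense because $B_k(\mu)\in L^\infty$ by Lemma \ref{bo-lip}) and $\ip{T_\mu f}{g}_{A^2_\alpha}=\int_{\B_n}h\,d\mu$, exactly as in the computation proving Lemma \ref{CM}; hence \eqref{bkth} bounds the bilinear form of $T_{B_k(\mu)}-T_\mu$, and taking suprema over $\norm{g}_{A^q_\alpha}\le1$ and $\norm{f}_{A^p_\alpha}\le1$ with $(A^p_\alpha)^*=A^q_\alpha$ gives $\norm{T_{B_k(\mu)}-T_\mu}_{\mathcal{L}(A^p_\alpha,A^p_\alpha)}\le C(k)\norm{B_\alpha(\abs{\mu})}_\infty\to0$. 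To establish \eqref{bkth} I would fix $r=\tfrac14$ and split the left-hand side as
$$
\Big(\int_{\B_n}B_{k,r}(\mu)h\,dv_\alpha-\int_{\B_n}h\,d\mu\Big)+\int_{\B_n}(B_k-B_{k,r})(\mu)h\,dv_\alpha ,
$$
treating the truncated term and the tail separately. I will repeatedly use that $\norm{B_\alpha(\abs\mu)}_\infty$ equals the RKM (Carleson) constant of $\abs\mu$ appearing in Lemma \ref{CM}.

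For the truncated term I would apply the duality \eqref{bkdual} to rewrite it as $\int_{\B_n}[B_{k,r}^\ast(h)(w)-h(w)]\,d\mu(w)$. Writing $\Lambda(u,w)=(1-\abs u^2)^{n+1+\alpha}\abs{1-\overline u w}^{-2(n+1+\alpha)}$ for the kernel in \eqref{adju} and using that $dv_k$ is a probability measure, I decompose
$$
B_{k,r}^\ast(h)(w)-h(w)=\int_{\abs u<r}\Lambda(u,w)[h(\varphi_w(u))-h(w)]\,dv_k(u)+h(w)\int_{\abs u<r}[\Lambda(u,w)-1]\,dv_k(u)-h(w)\int_{\abs u\ge r}dv_k(u).
$$
The modulus of the first summand is at most $B_{k,r}^\ast(\abs{h-h(w)})(w)$, so after integrating against $d\abs\mu$, invoking Lemma \ref{new1}, applying Fubini (the inner $w$-integral is $B_\alpha(\abs\mu)(\zeta)\le\norm{B_\alpha(\abs\mu)}_\infty$), and then using Hölder together with the standard estimate $\norm{\widetilde{\nabla}f}_{L^p_\alpha}\lesssim\norm f_{A^p_\alpha}$ and its $q$-analogue, this contributes $\lesssim C(k)\norm{B_\alpha(\abs\mu)}_\infty\norm f_{A^p_\alpha}\norm g_{A^q_\alpha}$. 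For the last two summands I would note that $\abs{\Lambda(u,w)-1}\lesssim\abs u$ for $\abs u<\tfrac14$, uniformly in $w$ since $\abs{1-\overline u w}\ge\tfrac34$, and that $\int_{\abs u\ge r}dv_k\le\tfrac1r\int_{\B_n}\abs u\,dv_k=4C(k)$; both are therefore $\lesssim C(k)\abs{h(w)}$, and integrating $\abs h=\abs f\,\abs g$ against $d\abs\mu$ and applying Hölder and the Carleson embedding of Lemma \ref{CM} again yields $\lesssim C(k)\norm{B_\alpha(\abs\mu)}_\infty\norm f_{A^p_\alpha}\norm g_{A^q_\alpha}$.

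For the tail I would estimate $\abs{\int_{\B_n}(B_k-B_{k,r})(\mu)h\,dv_\alpha}\le\int_{\B_n}\abs h\,(B_k-B_{k,r})(\abs\mu)\,dv_\alpha$. On the truncation complement $\abs{\varphi_z(w)}\ge\tfrac14$ one has $(1-\abs{\varphi_z(w)}^2)^{k-\alpha}\le(\tfrac{15}{16})^{k-\alpha}$, so
$$
(B_k-B_{k,r})(\abs\mu)(z)\le\Big(\tfrac{15}{16}\Big)^{k-\alpha}\frac{c_k}{c_\alpha}B_\alpha(\abs\mu)(z)\le\Big(\tfrac{15}{16}\Big)^{k-\alpha}\frac{c_k}{c_\alpha}\norm{B_\alpha(\abs\mu)}_\infty .
$$
Since $c_k/c_\alpha$ grows only polynomially in $k$ while $(\tfrac{15}{16})^{k-\alpha}$ decays geometrically, the quantity $\widetilde C(k):=(\tfrac{15}{16})^{k-\alpha}c_k/c_\alpha$ tends to $0$; combining this with $\int_{\B_n}\abs f\,\abs g\,dv_\alpha\le\norm f_{A^p_\alpha}\norm g_{A^q_\alpha}$ bounds the tail by $\widetilde C(k)\norm{B_\alpha(\abs\mu)}_\infty\norm f_{A^p_\alpha}\norm g_{A^q_\alpha}$. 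Taking the constant in \eqref{bkth} to be a fixed multiple of $\max\{\int_{\B_n}\abs u\,dv_k,\widetilde C(k)\}$, which tends to $0$ as $k\to\infty$, finishes the argument.

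I expect the genuinely hard estimate to be already encapsulated in Lemma \ref{new1}; the remaining care lies in two places. First, $B_{k,r}^\ast$ does not exactly reproduce constants, which forces the defect term handled via $\abs{\Lambda-1}\lesssim\abs u$. Second, in the tail one must verify that the polynomial growth of the normalizing constant $c_k$ is dominated by the geometric factor $(\tfrac{15}{16})^{k-\alpha}$ produced by the truncation, so that $\widetilde C(k)\to0$. The only ingredient imported from outside the excerpt is the standard invariant-gradient norm bound $\norm{\widetilde{\nabla}f}_{L^p_\alpha}\lesssim\norm f_{A^p_\alpha}$ for holomorphic $f$ \cite{Zhu}, used to pass from the right-hand side of \eqref{bkast} to $\norm f_{A^p_\alpha}\norm g_{A^q_\alpha}$.
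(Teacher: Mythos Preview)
Your proposal is correct and follows essentially the same route as the paper: the same splitting $B_k(\mu)=B_{k,r}(\mu)+E_{k,r}(\mu)$ with $r\le\tfrac14$, the duality \eqref{bkdual}, the decomposition of $B_{k,r}^\ast(h)-h$ into the oscillation piece handled by Lemma~\ref{new1} plus the defect $B_{k,r}^\ast(1)-1$ (your two ``last summands'' are exactly the paper's $|B_{k,r}^\ast(1)-v_k(r\B_n)|+v_k(\B_n\setminus r\B_n)$), and the geometric tail bound $E_{k,r}(|\mu|)\le(1-r^2)^{k-\alpha}\tfrac{c_k}{c_\alpha}\|B_\alpha(|\mu|)\|_\infty$. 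The only cosmetic differences are your explicit Chebyshev-type bound $\int_{|u|\ge r}dv_k\le r^{-1}\int|u|\,dv_k$ and fixing $r=\tfrac14$ from the outset.
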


\begin{proof}
Fix any $0<r\leq \frac{1}{4}$ and split $B_k(\mu) = B_{k,r} (\mu) + E_{k,r} (\mu)$, where
$$
E_{k,r} (\mu)(z) = \frac{c_k}{c_\alpha}   \, \int_{|\varphi_z(w)|\ge r}
\frac{ (1-|\varphi_z(w)|^2)^{n+1+k} }{ (1-|w|^2)^{n+1+\alpha} }  \,
\,d\mu(w).
$$
From \eqref{bkdual} we see that
\begin{eqnarray*}
J_k := \left|\int_{\B_n} B_k(\mu) h \,dv_\alpha - \int_{\B_n} h \,d\mu \right| &\leq &
\left|\int_{\B_n} B_{k,r}(\mu) h \,dv_\alpha - \int_{\B_n} h \,d\mu \right| + \int_{\B_n} \abs{E_{k,r}(\mu) h}\, dv_\alpha \\
&\leq & \int_{\B_n}  \abs{B_{k,r}^\ast (h) -h}\,  d|\mu| +     \int_{\B_n}
E_{k,r}(\abs{\mu}) \,\abs{h}\, dv_\alpha  ,
\end{eqnarray*}
and since
\begin{eqnarray*}
\abs{B_{k,r}^\ast (h) -  h} & \leq &
\abs{B_{k,r}^\ast (h) - B_{k,r}^\ast (1) h}  +  \abs{B_{k,r}^\ast (1)- 1} \, \abs{h} \\
& \leq & B_{k,r}^\ast (\abs{h-h(w)})(w)  +  \abs{B_{k,r}^\ast (1)- 1} \,
\abs{h},
\end{eqnarray*}
we get
\begin{equation}\label{bkmain}
J_k \leq \int_{\B_n} B_{k,r}^\ast (\abs{h-h(w)})d\abs{\mu}(w) + \int_{\B_n} \abs{B_{k,r}^\ast
(1)- 1} \, \abs{h} d\abs{\mu} +  \int_{\B_n} E_{k,r}(\abs{\mu}) \,\abs{h}\, dv_\alpha  .
\end{equation}
So, it is enough to show that each one of the above integrals admits
a bound as in \eqref{bkth}.

For the last integral, observe
\begin{align*}
E_{k,r} (|\mu|)(z) &= \frac{c_k}{c_\alpha}   \,
\int_{|\varphi_z(w)|\ge r}
\frac{ \left(1-|\varphi_z(w)|^2\right)^{n+1+\alpha} }{ \left(1-|w|^2\right)^{n+1+\alpha} }  \left(1-|\varphi_z(w)|^2\right)^{k-\alpha}\, d|\mu|(w) \\
&\leq \frac{c_k}{c_\alpha}   \,\left( 1-r^2 \right)^{k-\alpha} \|B_\alpha(|\mu|)\|_\infty \\*[1mm]
&= \frac{\Gamma(n+k+1)}{\Gamma(k+1)}
\frac{\Gamma(\alpha+1)}{\Gamma(n+\alpha+1)}    \,\left( 1-r^2 \right)^{k-\alpha} \|B_\alpha(|\mu|)\|_\infty \\
&\leq C(\alpha , n)\, k^n    \,\left( 1-r^2 \right)^{k-\alpha} \|B_\alpha(|\mu|)\|_\infty ,
\end{align*}
where the last inequality follows from Stirling's formula. Thus, for
any $0<r<1$ there is a constant $C_r(k) \to 0$ as $k\to\infty$ such
that $E_{k,r} (|\mu|)\le C_r(k)\|B_\alpha(|\mu|)\|_\infty$.

In order to estimate the second integral in \eqref{bkmain}, observe
that since $v_k$ is a probability measure whose mass tends to
accumulate at the origin when $k\to\infty$, then $v_k\left(\B_n\setminus
r\B_n\right)\to 0$ when $k\to\infty$ for any $0<r<1$. Furthermore, since
$$
\abs{B_{k,r}^\ast (1)- 1}  \leq  \abs{B_{k,r}^\ast (1)- v_k(r\B_n)} + v_k\left(\B_n\setminus r\B_n\right),
$$
we only need to show that the the first term of the above sum is
bounded by a constant that  tends to 0 as $k\to\infty$.
Indeed, applying Lagrange's Theorem to the function $f(x) = x^{n+1+\alpha}$ on $[0,1]$, and using that $r\le \frac{1}{4}$,  we find a constant $C(n,\alpha)$ so that
\begin{align*}
\abs{B_{k,r}^\ast (1)(w) - v_k(r\B_n)}  &\leq \int_{|u|<r}
\left|  \frac{ (1-|u|^2)^{n+1+\alpha} }{ |1-\overline{u}w|^{2(n+1+\alpha)} } -1   \right|    \,dv_k(u) \\*[1mm]
&\lesssim     \int_{\B_n}  \Big|     (1-|u|^2)   - |1-\overline{u}w|^2    \Big|     \, dv_k(u)   \\*[1mm]
&\lesssim    \int_{\B_n} |u|    \, dv_k(u) \rightarrow 0
\end{align*}
when $k\rightarrow \infty$. Then $|B_{k,r}^\ast (1)- 1| \le C_2(k) \to 0$ as $k\rightarrow \infty$.

Finally, we use Lemma \ref{new1} to estimate the first integral in \eqref{bkmain}. By \eqref{bkast},
\begin{eqnarray*}
\int_{\B_n} B_{k,r}^\ast \left(|h-h(w)|\right)d|\mu|(w)
& \leq &  C(k) \int_{\B_n} \left(|g(\zeta) | \, |\widetilde{\nabla} f(\zeta)|+ |f(\zeta) | \, |\widetilde{\nabla} g(\zeta)| \right)  B_\alpha (|\mu|)(\zeta)  \, dv_\alpha(\zeta)\\
&\le &  C(k) \| B_\alpha (|\mu|)\|_\infty  \, \left(  \|g \|_{A^q_\alpha} \, \|\widetilde{\nabla} f\|_{L^p_\alpha} + \|f \|_{A^p_\alpha} \, \|\widetilde{\nabla} g\|_{L^q_\alpha}  \right).
\end{eqnarray*}
Since \cite{Zhu}*{Theorem 2.16} says that $\|\widetilde{\nabla} f\|_{L^p_\alpha} \lesssim  \|f\|_{A^p_\alpha}$ and
$\|\widetilde{\nabla} g\|_{L^q_\alpha} \lesssim \|g\|_{A^q_\alpha}$, and Lemma \ref{new1} says that
$C(k) \to 0$ when $k\to\infty$, the theorem follows.

\end{proof}


\subsection{Maps from \texorpdfstring{$M_{\mathcal{A}}$}{the maximal ideal space}
into \texorpdfstring{$\mathcal{L}(A^p_{\alpha}, A^p_{\alpha})$, }{the bounded operators on the weighted Bergman space}}

Define a map
$$
U^{(p,\alpha)}_z f(w):= f(\varphi_z(w))\frac{(1-\abs{z}^2)^{\frac{n+1+\alpha}{p}}}{(1-w\overline{z})^{\frac{2(n+1+\alpha)}{p}}}
$$
where the argument of $(1-w\overline{z})$ is used to define the root appearing above.
A standard change of variable and straightforward computations give
$$
\norm{U^{(p,\alpha)}_z f}_{A^p_{\alpha}}=\norm{f}_{A^p_{\alpha}} \quad\forall f\in A^p_{\alpha},
$$
and $\,U^{(p,\alpha)}_zU^{(p,\alpha)}_z=Id_{A^p_\alpha}$.  For a real number $r$, set
$$
J^r_z(w)=\frac{(1-\abs{z}^2)^{r\frac{n+1+\alpha}{2}}}{(1-w\overline{z})^{r(n+1+\alpha)}}.
$$
Observe that
$$
U^{(p,\alpha)}_z f(w)=f(\varphi_z(w))J_z^{\frac{2}{p}}(w)\quad\textnormal{ and }\quad  U^{(p,\alpha)}_z= T_{J_z^{\frac{2}{p}-1}}U^{(2,\alpha)}_z=U^{(2,\alpha)}_z T_{J_z^{1-\frac{2}{p}}}.
$$
So, if $q$ is the conjugate exponent of $p$, we have
$$
\left(U^{(q,\alpha)}_z\right)^{*}=U^{(2,\alpha)}_z T_{\overline{J_z}^{\frac{2}{q}-1}}
=T_{\overline{J_z}^{1-\frac{2}{q}}}U^{(2,\alpha)}_z.
$$
Then using that $U^{(2,\alpha)}_zU^{(2,\alpha)}_z=Id_{A^2_\alpha}$ and straightforward computations, we obtain
$$
\quad \left(U^{(q,\alpha)}_z\right)^{*}U^{(p,\alpha)}_z=T_{b_z}
\quad\textnormal{ and } \quad U^{(p,\alpha)}_z\left(U^{(q,\alpha)}_z\right)^{*}=T_{b_z}^{-1},
$$
where
\begin{equation}
\label{bofz}
b_z(w)=\frac{(1-\overline{w}z)^{(n+1+\alpha)\left(\frac{1}{q}-\frac{1}{p}\right)}}{(1-\overline{z}w)^{(n+1+\alpha)\left(\frac{1}{q}-\frac{1}{p}\right)}}.
\end{equation}
Also observe at this point that if $p=q=2$ then $b_z(w)=1$.  This will be important later on when we consider the special case of $A^2_\alpha$.

For $z\in\B_n$ and $S\in\mathcal{L}(A^p_{\alpha}, A^p_{\alpha})$ we then define the map
$$
S_z :=U^{(p,\alpha)}_z S(U^{(q,\alpha)}_z)^{*},
$$
which induces a map $\Psi_S:\B_n\to \mathcal{L}(A^p_{\alpha}, A^p_{\alpha})$ given by $$\Psi_S(z)=S_z.$$  One should think of the map $S_z$ in the following way.
This is an operator on $A^p_\alpha$ and so it first acts as ``translation'' in $\B_n$, then the action of $S$, then ``translation'' back.  We now show how to extend the map $\Psi_S$ continuously to a map from $M_\mathcal{A}$ to $\mathcal{L}(A^p_{\alpha}, A^p_{\alpha})$ when endowed with both the weak and strong operator topologies.

First, observe that $C(\overline{\B_n})\subset\mathcal{A}$ induces a natural projection $\pi:M_{\mathcal{A}}\to M_{C(\overline{\B_n})}$.  If $x\in M_{\mathcal{A}}$, let
\begin{equation}
\label{bofx}
b_x(w)=\frac{(1-\overline{w}\pi(x))^{(n+1+\alpha)\left(\frac{1}{q}-\frac{1}{p}\right)}}{(1-\overline{\pi(x)}w)^{(n+1+\alpha)\left(\frac{1}{q}-\frac{1}{p}\right)}}.
\end{equation}
So, when $z_\omega$ is a net in $\B_n$ that tends to $x\in M_{\mathcal{A}}$,
then $z_{\omega}=\pi(z_{\omega})\to \pi(x)$ in the Euclidean metric, and so we have $b_{z_{\omega}}\to b_x$
uniformly on compact sets of $\B_n$ and boundedly.  Furthermore,
$$
(U^{(q,\alpha)}_z)^{*}U^{(p,\alpha)}_z=T_{b_z}\to T_{b_x}
\ \textnormal{ and }\ (U^{(p,\alpha)}_z)^{*}  U^{(q,\alpha)}_z=T_{\overline{b_z}}\to T_{\overline{b_x}},
$$
where convergence is in the strong operator topologies of $\mathcal{L}(A^p_{\alpha},A^p_{\alpha})$ and
$\mathcal{L}(A^q_{\alpha},A^q_{\alpha})$, respectively.  If $a\in\mathcal{A}$ then Lemma \ref{Maximal3}
implies $a\circ \varphi_{z_{\omega}}\to a\circ\varphi_x$ uniformly on compact sets of $\B_n$.
The above discussion implies that
$$
T_{(a\circ\varphi_{z_{\omega}} ) b_{z_{\omega}}}\to T_{(a\circ\varphi_x)b_x}
$$
in the strong operator topology associated with $\mathcal{L}(A^p_{\alpha},A^p_{\alpha})$.

Recall that we have $k_{z}^{(p,\alpha)}(w)=\frac{(1-\abs{z}^2)^{\frac{n+1+\alpha}{q}}}{(1-\overline{z}w)^{n+1+\alpha}}$,
with $\norm{k^{(p,\alpha)}_z}_{A^p_{\alpha}}\approx 1$, and so
$$
(1-\abs{\xi}^2)^{\frac{n+1+\alpha}{p}}J_z^{\frac{2}{p}}(\xi)=(1-\abs{\varphi_z(\xi)}^2)^{\frac{n+1+\alpha}{p}}\frac{\abs{1-\overline{z}\xi}^{\frac{2}{p}(n+1+\alpha)}}{(1-\xi\overline{z})^{\frac{2}{p}(n+1+\alpha)}}=(1-\abs{\varphi_z(\xi)}^2)^{\frac{n+1+\alpha}{p}}\lambda_{(p,\alpha)}(\xi,z).
$$
Here the constant $\lambda_{(p,\alpha)}$ is unimodular, and will essentially be the eigenvalue of the operator $\left(U^{(p,\alpha)}_z\right)^*$.  To see this, if $f\in A^{p}_{\alpha}$, then
\begin{eqnarray}
\ip{f}{\left(U^{(p,\alpha)}_z\right)^* k_\xi^{(q,\alpha)}}_{A^2_{\alpha}} & = &
\ip{U^{(p,\alpha)}_zf}{ k_\xi^{(q,\alpha)}}_{A^2_{\alpha}}=
\ip{J_z^{\frac{2}{p}} (f\circ\varphi_z)}{ k_\xi^{(q,\alpha)}}_{A^2_{\alpha}}\notag\\
& = & f(\varphi_z(\xi))(1-\abs{\xi}^2)^{\frac{n+1+\alpha}{p}}J_z^{\frac{2}{p}}(\xi)\notag\\
& = & f(\varphi_z(\xi))(1-\abs{\varphi_z(\xi)}^2)^{\frac{n+1+\alpha}{p}}\lambda_{(p,\alpha)}(\xi,z)\notag\\
& = & \ip{f}{\overline{\lambda_{(p,\alpha)}(\xi,z)} k_{\varphi_z(\xi)}^{(q,\alpha)}}_{A^2_{\alpha}}\notag.
\end{eqnarray}
This computation yields
\begin{equation}
\label{Comp}
\left(U^{(p,\alpha)}_z\right)^* k_\xi^{(q,\alpha)}=\lambda_{(p,\alpha)}(\xi,z)k_{\varphi_z(\xi)}^{(q,\alpha)}.
\end{equation}
We use these computations to study the continuity of the above map as a function of $z$.
\begin{lm}
\label{UniformCon}
Fix $\,\xi\in\B_n$.  Then the map $z\mapsto \left(U^{(p,\alpha)}_z\right)^* k_\xi^{(q,\alpha)}$ is uniformly continuous from $(\B_n,\rho)$ into $(A^{q}_{\alpha},\norm{\,\cdot\,}_{A^q_{\alpha}})$.
\end{lm}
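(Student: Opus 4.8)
The starting point is the identity \eqref{Comp}, which says that the map in question is $z\mapsto \lambda_{(p,\alpha)}(\xi,z)\,k_{\varphi_z(\xi)}^{(q,\alpha)}$, a unimodular multiple of a normalized reproducing kernel. The plan is to split the increment by the triangle inequality: writing $w_i=\varphi_{z_i}(\xi)$ and $\lambda_i=\lambda_{(p,\alpha)}(\xi,z_i)$, and using $\abs{\lambda_i}=1$ together with $\norm{k_{w}^{(q,\alpha)}}_{A^q_\alpha}\approx 1$, one gets $\norm{\left(U^{(p,\alpha)}_{z_1}\right)^* k_\xi^{(q,\alpha)}-\left(U^{(p,\alpha)}_{z_2}\right)^* k_\xi^{(q,\alpha)}}_{A^q_\alpha}\lesssim \norm{k_{w_1}^{(q,\alpha)}-k_{w_2}^{(q,\alpha)}}_{A^q_\alpha}+\abs{\lambda_1-\lambda_2}$. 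Thus it suffices to prove, uniformly, that $w\mapsto k_w^{(q,\alpha)}$ and $z\mapsto \lambda_{(p,\alpha)}(\xi,z)$ are $\rho$-uniformly continuous; the hyperbolic distance between the arguments $w_1,w_2$ is then controlled by Lemma~\ref{Invariance}, which gives $\rho(\varphi_{z_1}(\xi),\varphi_{z_2}(\xi))\lesssim \rho(z_1,z_2)/(1-\abs{\xi}^2)$, the factor $(1-\abs{\xi}^2)^{-1}$ being a harmless constant since $\xi$ is fixed.

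The heart of the matter is the uniform $\rho$-continuity of $w\mapsto k_w^{(q,\alpha)}$, which cannot come from Euclidean continuity alone, since the kernels concentrate as $w\to\partial\B_n$. I would therefore exploit the Möbius invariance encoded in the isometry $U^{(q,\alpha)}_{w_2}$. Since $U^{(q,\alpha)}_{w_2}$ preserves the $A^q_\alpha$ norm and $U^{(q,\alpha)}_{w_2}U^{(q,\alpha)}_{w_2}=Id$, I would compute $U^{(q,\alpha)}_{w_2}k_{w_i}^{(q,\alpha)}$ explicitly using the standard Möbius identity for $1-\overline{w_1}\varphi_{w_2}(\zeta)$. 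This localizes the problem at the origin: setting $u=\varphi_{w_2}(w_1)$, so that $\abs{u}=\rho(w_1,w_2)$, a direct calculation yields $U^{(q,\alpha)}_{w_2}k_{w_1}^{(q,\alpha)}=C\,(1-\overline{u}\zeta)^{-(n+1+\alpha)}\,U^{(q,\alpha)}_{w_2}k_{w_2}^{(q,\alpha)}$ for an explicit scalar $C=C(w_1,w_2)$, whence $U^{(q,\alpha)}_{w_2}\left(k_{w_1}^{(q,\alpha)}-k_{w_2}^{(q,\alpha)}\right)=\left(U^{(q,\alpha)}_{w_2}k_{w_2}^{(q,\alpha)}\right)\left[C\,(1-\overline{u}\zeta)^{-(n+1+\alpha)}-1\right]$. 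Bounding the $A^q_\alpha$ norm of the left side by $\norm{C(1-\overline{u}\zeta)^{-(n+1+\alpha)}-1}_{L^\infty}\,\norm{k_{w_2}^{(q,\alpha)}}_{A^q_\alpha}$ reduces everything to showing that the $L^\infty$ factor is $\lesssim \rho(w_1,w_2)$ uniformly in $w_2$.

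That last bound splits into $\sup_\zeta \abs{(1-\overline{u}\zeta)^{-(n+1+\alpha)}-1}\lesssim \abs{u}=\rho(w_1,w_2)$, which is immediate since $1-\overline{u}\zeta$ lies in a disc of radius $\abs{u}$ about $1$, and $\abs{C-1}\lesssim \rho(w_1,w_2)$. For the modulus of $C$ I would combine the Möbius identity $1-\abs{\varphi_{w_2}(w_1)}^2=\frac{(1-\abs{w_1}^2)(1-\abs{w_2}^2)}{\abs{1-\overline{w_1}w_2}^2}$ with the elementary inequality $(1-\abs{w_1}\abs{w_2})^2\ge (1-\abs{w_1}^2)(1-\abs{w_2}^2)$; for the argument of $C$ the key elementary estimate is $\abs{\im(\overline{w_1}w_2)}\le \rho(w_1,w_2)\,\abs{1-\overline{w_1}w_2}$, which follows from the same Möbius identity and $(1-\re(\overline{w_1}w_2))^2\ge (1-\abs{w_1}^2)(1-\abs{w_2}^2)$. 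The phase factor is handled separately: from its defining relation $\lambda_{(p,\alpha)}(\xi,z)=\left(\abs{1-\xi\overline{z}}/(1-\xi\overline{z})\right)^{2(n+1+\alpha)/p}$, which for fixed $\xi$ is Euclidean-Lipschitz in $z$ because $\abs{1-\xi\overline{z}}\ge 1-\abs{\xi}>0$; together with the elementary uniform bound $\abs{z_1-z_2}\lesssim \rho(z_1,z_2)$ (valid for $\rho\le \tfrac12$) this gives $\abs{\lambda_1-\lambda_2}\lesssim \rho(z_1,z_2)$. Assembling the three pieces shows the map is in fact locally $\rho$-Lipschitz, hence uniformly continuous.

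I expect the main obstacle to be the uniform-in-$w_2$ control of $\abs{C-1}$, specifically the argument estimate for $1-\overline{w_1}w_2$: this is exactly where the naive bound $\abs{w_1-w_2}/\abs{1-\overline{w_1}w_2}$ degenerates near the boundary, and where the sharper inequality $\abs{\im(\overline{w_1}w_2)}\le \rho(w_1,w_2)\,\abs{1-\overline{w_1}w_2}$ is essential. Apart from this point, the argument is a bookkeeping of Möbius identities (with all fractional powers taken in the principal branch, as set up before \eqref{bofz}), and the use of Lemma~\ref{Invariance} to transfer continuity from the base point $z$ to the argument $\varphi_z(\xi)$.
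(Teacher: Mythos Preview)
Your argument is correct. The overall structure---use \eqref{Comp}, split by the triangle inequality, invoke Lemma~\ref{Invariance} to transfer $\rho$-continuity from $z$ to $\varphi_z(\xi)$, and then prove uniform $\rho$-continuity of $w\mapsto k_w^{(q,\alpha)}$ and of $z\mapsto\lambda_{(p,\alpha)}(\xi,z)$---is exactly the paper's outline. The divergence is in how you handle the map $w\mapsto k_w^{(q,\alpha)}$. The paper argues by duality: it identifies $\norm{k_{z}^{(q,\alpha)}-k_{\varphi_z(w)}^{(q,\alpha)}}_{A^q_\alpha}$ with a supremum of weighted point-evaluation differences $\abs{(1-\abs{z}^2)^{(n+1+\alpha)/p}f(z)-(1-\abs{\varphi_z(w)}^2)^{(n+1+\alpha)/p}f(\varphi_z(w))}$ over the unit ball of $A^p_\alpha$, and estimates each piece. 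You instead conjugate by the isometry $U^{(q,\alpha)}_{w_2}$ to localize at the origin, factor the difference as a bounded holomorphic multiplier times $U^{(q,\alpha)}_{w_2}k_{w_2}^{(q,\alpha)}$, and bound the multiplier in $L^\infty$ by $\rho(w_1,w_2)$. Both approaches exploit the same M\"obius invariance; yours is more explicit and yields a Lipschitz bound rather than mere uniform continuity, at the cost of a longer computation.

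One simplification: once you carry out the factorization you describe, the scalar $C$ comes out as $\bigl(\frac{1-\abs{u}^2}{\abs{1-\overline{u}w_2}^2}\bigr)^{(n+1+\alpha)/p}(1-\overline{u}w_2)^{n+1+\alpha}$ with $u=\varphi_{w_2}(w_1)$, and since $\abs{u}=\rho(w_1,w_2)$ every factor is $1+O(\abs{u})$ uniformly in $w_2$. So $\abs{C-1}\lesssim\rho(w_1,w_2)$ follows directly, without the separate modulus/argument analysis or the inequality $\abs{\im(\overline{w_1}w_2)}\le\rho(w_1,w_2)\abs{1-\overline{w_1}w_2}$ that you flag as the delicate point.
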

\begin{proof}
By \eqref{Comp} we only need to prove the maps $z\mapsto \lambda_{(p,\alpha)}(z,\xi)$ and
$z\mapsto k_{\varphi_z(\xi)}^{(q,\alpha)}$ are uniformly continuous from $(\B_n,\rho)$ into
$(\C,\abs{\,\cdot\,})$ and $(A^{q}_{\alpha},\norm{\,\cdot\,}_{A^q_{\alpha}})$, respectively.
It is obvious $z\mapsto \lambda_{(p,\alpha)}(z,\xi)$  has the desired property.
So, we focus only in the continuity of the second map.

By Lemma \ref{Invariance}, we have that $z\mapsto\varphi_z(\xi)$ is uniformly continuous from $(\B_n,\rho)$ into itself.  So, it suffices to prove the uniform continuity of the map $w\mapsto k^{(q,\alpha)}_w$.  Namely, for any $\epsilon>0$, there is a $\delta>0$ such that if $\abs{w}<\delta$ then
$$
\sup_{z\in\B_n}\norm{k_{z}^{(q,\alpha)}-k_{\varphi_z(w)}^{(q,\alpha)}}_{A^q_{\alpha}}<\epsilon.
$$
We use the duality between $A^{p}_{\alpha}$ and $A^{q}_{\alpha}$ to have that
$$
\sup_{z\in\B_n}\norm{k_{z}^{(q,\alpha)}-k_{\varphi_z(w)}^{(q,\alpha)}}_{A^q_{\alpha}}\approx\sup_{z\in\B_n}\sup_{f\in A^{p}_{\alpha}}\abs{(1-\abs{z}^2)^{\frac{n+1+\alpha}{p}}f(z)-(1-\abs{\varphi_z(w)}^2)^{\frac{n+1+\alpha}{p}}f(\varphi_z(w))}.
$$
Consider the term inside the supremums, and observe it can be dominated by
$$
\abs{(1-\abs{z}^2)^{\frac{n+1+\alpha}{p}}\left(f(z)-f(\varphi_z(w))\right)} +\abs{f(\varphi_z(w))}\abs{(1-\abs{z}^2)^{\frac{n+1+\alpha}{p}}-(1-\abs{\varphi_z(w)}^2)^{\frac{n+1+\alpha}{p}}}
$$
by adding and subtracting a common term.  For the second term, it is easy to see using the reproducing property of the kernel $k_z^{(p,\alpha)}$ that this is dominated by
$$
C_{p,\alpha}\norm{f}_{A^p_{\alpha}}
\abs{1-\frac{\abs{1-\overline{w} z}^{2\frac{n+1+\alpha}{p}}}{(1-\abs{w}^2)^{\frac{n+1+\alpha}{p}}}},
$$
and the last expression can be made as small we wish, independently of $z$, by taking $\abs{w}$ small.  For the first term, observe that
$$
(1-\abs{z}^2)^{\frac{n+1+\alpha}{p}}\abs{(f(z)-f(\varphi_z(w)))} \lesssim \norm{f}_{A^p_{\alpha}}\norm{K_z-K_{\varphi_z}}_{L^\infty}.
$$
Again, this last estimate can be made as small as desired.
\end{proof}

\begin{prop}
\label{WOTCon}
Let $S\in \mathcal{L}(A^p_{\alpha}, A^p_{\alpha})$.  Then the map $\Psi_S: \B_n \to \left(\mathcal{L}(A^p_{\alpha}, A^p_{\alpha}), WOT\right)$ extends continuously to $M_{\mathcal{A}}$.
\end{prop}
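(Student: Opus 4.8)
The plan is to reduce the $WOT$-valued extension problem to a family of scalar extension problems, one for each pair $(f,g)\in A^p_\alpha\times A^q_\alpha$, and then reassemble. Recall that the weak operator topology on $\mathcal{L}(A^p_{\alpha},A^p_{\alpha})$ is the initial topology generated by the functionals $T\mapsto \langle Tf,g\rangle_{A^2_\alpha}$ with $f\in A^p_\alpha$ and $g\in A^q_\alpha$; hence a map into $(\mathcal{L}(A^p_{\alpha},A^p_{\alpha}),WOT)$ is continuous exactly when each such scalar composition is continuous. So it suffices to show that for every fixed $f,g$ the function $z\mapsto\langle S_zf,g\rangle_{A^2_\alpha}$ extends continuously to $M_{\mathcal{A}}$, and then to verify that the resulting extensions are the matrix coefficients of a genuine bounded operator.

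First I would rewrite the coefficient using adjoints with respect to the $A^2_\alpha$ pairing:
$$
\langle S_z f, g\rangle_{A^2_\alpha} = \langle S (U^{(q,\alpha)}_z)^* f, (U^{(p,\alpha)}_z)^* g\rangle_{A^2_\alpha}.
$$
The core step is to prove that $z\mapsto (U^{(q,\alpha)}_z)^* f$ is $\rho$-uniformly continuous from $\B_n$ into $A^p_\alpha$, and likewise $z\mapsto (U^{(p,\alpha)}_z)^* g$ into $A^q_\alpha$. By \eqref{Comp}, Lemma \ref{UniformCon} gives this when $f,g$ are reproducing kernels (the $f$-side uses the $p\leftrightarrow q$ symmetric form of \eqref{Comp}), and therefore, by linearity, for finite linear combinations of kernels. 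To pass to arbitrary $f,g$ I would use that such combinations are dense in $A^p_\alpha$ and $A^q_\alpha$ (the kernels have dense span by the reproducing property) together with the uniform bound $\sup_z\|(U^{(q,\alpha)}_z)^*\|=1$: approximating $f$ by a kernel combination $f'$ within $\epsilon$, the difference $(U^{(q,\alpha)}_z)^*f-(U^{(q,\alpha)}_w)^*f$ is controlled by $2\epsilon$ plus the uniformly continuous contribution of $f'$, which yields uniform continuity for $f$. I expect this density-plus-uniform-bound argument to be the main obstacle, since it is precisely what upgrades the kernel-level statement of Lemma \ref{UniformCon} to all of $A^p_\alpha$.

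Granting this, the coefficient $z\mapsto\langle S(U^{(q,\alpha)}_z)^* f,(U^{(p,\alpha)}_z)^* g\rangle$ is a bilinear expression in two bounded, $\rho$-uniformly continuous vector-valued maps, hence itself bounded and $\rho$-uniformly continuous into $\C$; its range lies in a bounded, hence relatively compact, subset of $\C$. By Lemma \ref{Extend} it extends to a continuous function $\phi_{f,g}$ on $M_{\mathcal{A}}$. Finally I would fix $x\in M_{\mathcal{A}}$ and observe that $(f,g)\mapsto \phi_{f,g}(x)$ is sesquilinear, since uniqueness of continuous extensions transfers the algebraic identities valid on $\B_n$, and bounded: writing $\phi_{f,g}(x)=\lim_\omega\langle S_{z_\omega}f,g\rangle$ along a net $z_\omega\to x$ and using $\|S_z\|\le\|S\|$ gives $|\phi_{f,g}(x)|\le\|S\|\,\|f\|_{A^p_\alpha}\|g\|_{A^q_\alpha}$. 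Thus there is a unique $\Psi_S(x)\in\mathcal{L}(A^p_{\alpha},A^p_{\alpha})$ with $\langle\Psi_S(x)f,g\rangle=\phi_{f,g}(x)$; it agrees with $S_z$ on $\B_n$, and since each coefficient $x\mapsto\langle\Psi_S(x)f,g\rangle=\phi_{f,g}(x)$ is continuous on $M_{\mathcal{A}}$, the extended map is continuous into $(\mathcal{L}(A^p_{\alpha},A^p_{\alpha}),WOT)$, as desired.
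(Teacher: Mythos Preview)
Your proposal is correct and follows essentially the same route as the paper: both reduce to showing that each scalar coefficient $z\mapsto\langle S_zf,g\rangle_{A^2_\alpha}$ is $\rho$-uniformly continuous, and both obtain this by proving uniform continuity of $z\mapsto (U_z^{(q,\alpha)})^*f$ (and the $p$-version) first on kernels via Lemma~\ref{UniformCon} and then on all of $A^p_\alpha$ by density and the uniform bound on the adjoints. The only cosmetic difference is that the paper applies Lemma~\ref{Extend} directly to the operator-valued map, using that norm-bounded sets in $\mathcal{L}(A^p_\alpha)$ are $WOT$-metrizable with compact closure, whereas you apply Lemma~\ref{Extend} coefficientwise and then reassemble the extensions into a bounded operator---both packagings are standard and equivalent here.
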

\begin{proof}
Bounded sets in $\mathcal{L}(A^p_{\alpha}, A^p_{\alpha})$ are metrizable and have compact closure in the weak operator topology.  Since $\Psi_S(\B_n)$ is bounded, by Lemma \ref{Extend}, we only need to show $\Psi_S$ is uniformly continuous from $(\B_n,\rho)$ into $\left(\mathcal{L}(A^p_{\alpha}, A^p_{\alpha}), WOT\right)$, where $WOT$ is the weak operator topology.  Namely, we need to demonstrate that for $f\in A^p_{\alpha}$ and $g\in A^{q}_{\alpha}$ the function $z\mapsto \ip{S_z f}{g}_{A^2_{\alpha}}$ is uniformly continuous from $(\B_n,\rho)$ into $(\C, \abs{\,\cdot\,})$.

For $z_1,z_2\in\B_n$ we have
\begin{eqnarray*}
S_{z_1}-S_{z_2} & = & U_{z_1}^{(p,\alpha)}S(U_{z_1}^{(q,\alpha)})^*-U_{z_2}^{(p,\alpha)}S(U_{z_2}^{(q,\alpha)})^*\\
 & = & U_{z_1}^{(p,\alpha)}S[(U_{z_1}^{(q,\alpha)})^*-(U_{z_2}^{(q,\alpha)})^*]+(U_{z_1}^{(p,\alpha)}-U_{z_2}^{(p,\alpha)})S(U_{z_2}^{(q,\alpha)})^*\\
 & = & A+B.
\end{eqnarray*}
The terms $A$ and $B$ have a certain symmetry, and so it is enough to deal with either,
since the argument will work in the other case as well.  Observe that
\begin{eqnarray*}
\abs{\ip{Af}{g}_{A^2_{\alpha}}} & \leq & \norm{U_{z_1}^{(p,\alpha)}S}_{\mathcal{L}(A^p_{\alpha}, A^p_{\alpha})}\norm{[(U_{z_1}^{(q,\alpha)})^*-(U_{z_2}^{(q,\alpha)})^*]f}_{A^p_{\alpha}}\norm{g}_{A^q_{\alpha}}\\
\abs{\ip{Bf}{g}_{A^2_{\alpha}}} & \leq & \norm{(U_{z_1}^{(q,\alpha)})^{*}S}_{\mathcal{L}(A^p_{\alpha}, A^p_{\alpha})}\norm{[(U_{z_1}^{(p,\alpha)})^*-(U_{z_2}^{(p,\alpha)})^*]g}_{A^q_{\alpha}}\norm{f}_{A^p_{\alpha}}.
\end{eqnarray*}
Since $S$ is bounded and since $\norm{U_{z}^{(p,\alpha)}}_{\mathcal{L}(A^p_{\alpha}, A^p_{\alpha})}\leq C(p,\alpha)$ for all $z$, we just need to show the expression
$$
\norm{[(U_{z_1}^{(p,\alpha)})^*-(U_{z_2}^{(p,\alpha)})^*]g}_{A^q_{\alpha}}
$$
can be made small.  It suffices to do this on a dense set of functions,
and in particular we can take the linear span of $\left\{k_{w}^{(p,\alpha)}:w\in\B_n\right\}$.
Then we can apply Lemma \ref{UniformCon} to conclude the result.
\end{proof}

This Proposition allows us to define $S_x$ for all $x\in M_{\mathcal{A}}$. Namely, we let $S_x:=\Psi_S(x)$. In particular, if $(z_{\omega})$ is a net in $\B_n$ tending to $x\in M_{\mathcal{A}}$ then $S_{z_\omega}\to S_x$ in the weak operator topology. In Proposition~\ref{SOTCon} below we will show that if $S\in\mathcal{T}_{p,\alpha}$ then we also have $S_{z_\omega}\to S_x$ in the strong operator topology.

\begin{lm}
\label{Inverse}
If $(z_\omega)$ is a net in $\B_n$ converging to $x\in M_{\mathcal{A}}$ then $T_{b_x}$ is invertible and $T_{b_{z_{\omega}}}^{-1}\to T_{b_x}^{-1}$ in the strong operator topology.
\end{lm}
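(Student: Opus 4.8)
The plan is to exploit the two factorizations recorded just before the statement, namely $(U^{(q,\alpha)}_z)^{*}U^{(p,\alpha)}_z = T_{b_z}$ and $U^{(p,\alpha)}_z(U^{(q,\alpha)}_z)^{*} = T_{b_z}^{-1}$, together with the two strong-operator convergences already established there: $T_{b_{z_\omega}}\to T_{b_x}$ in $\mathcal{L}(A^p_{\alpha},A^p_{\alpha})$ and $T_{\overline{b_{z_\omega}}}\to T_{\overline{b_x}}$ in $\mathcal{L}(A^q_{\alpha},A^q_{\alpha})$. First I would observe that since $U^{(p,\alpha)}_z$ is an isometry of $A^p_{\alpha}$ and the Banach-space adjoint $(U^{(q,\alpha)}_z)^{*}$ has norm equal to $\norm{U^{(q,\alpha)}_z}_{\mathcal{L}(A^q_{\alpha},A^q_{\alpha})}=1$, both factorizations give $\norm{T_{b_z}}\le 1$ and $\norm{T_{b_z}^{-1}}\le 1$ for every $z$. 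Then $\norm{f}=\norm{T_{b_z}^{-1}T_{b_z}f}\le\norm{T_{b_z}f}\le\norm{f}$ forces each $T_{b_z}$ to be a surjective isometry, and in particular the inverses are uniformly bounded by $1$.

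Next I would pass to the limit to establish invertibility of $T_{b_x}$. Since $b_x$ is unimodular, $T_{b_x}=P_{\alpha}M_{b_x}$ is a genuine bounded Toeplitz operator, and the strong convergence $T_{b_{z_\omega}}f\to T_{b_x}f$ for each fixed $f$ combined with $\norm{T_{b_{z_\omega}}f}=\norm{f}$ shows $\norm{T_{b_x}f}=\norm{f}$; thus $T_{b_x}$ is an isometry, hence injective with closed range. The same argument applied on $A^q_{\alpha}$ to $T_{\overline{b_{z_\omega}}}\to T_{\overline{b_x}}$ shows that $T_{\overline{b_x}}$ is an isometry of $A^q_{\alpha}$, and since $T_{\overline{b_x}}=(T_{b_x})^{*}$ under the $A^2_{\alpha}$ pairing, the adjoint of $T_{b_x}$ is injective. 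Injectivity of $(T_{b_x})^{*}$ forces $T_{b_x}$ to have dense range, and together with the closed range coming from the lower bound this yields surjectivity; the bounded inverse theorem then makes $T_{b_x}$ invertible (indeed a surjective isometry, so that $\norm{T_{b_x}^{-1}}=1$).

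Finally, I would deduce convergence of the inverses from the resolvent-type identity
$$
T_{b_{z_\omega}}^{-1}-T_{b_x}^{-1}=T_{b_{z_\omega}}^{-1}\left(T_{b_x}-T_{b_{z_\omega}}\right)T_{b_x}^{-1}.
$$
Fixing $f\in A^p_{\alpha}$ and setting $g=T_{b_x}^{-1}f$, the uniform bound $\norm{T_{b_{z_\omega}}^{-1}}\le 1$ gives $\norm{(T_{b_{z_\omega}}^{-1}-T_{b_x}^{-1})f}\le\norm{(T_{b_x}-T_{b_{z_\omega}})g}$, and the right-hand side tends to $0$ because $T_{b_{z_\omega}}\to T_{b_x}$ strongly; this is precisely the asserted strong-operator convergence $T_{b_{z_\omega}}^{-1}\to T_{b_x}^{-1}$. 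The one genuinely non-formal step is the invertibility of $T_{b_x}$: the isometry property passes to the strong limit for free, but surjectivity is \emph{not} automatic for a strong limit of surjective isometries, and the crux is to recover it from the dual side. This is exactly why one needs the second convergence $T_{\overline{b_{z_\omega}}}\to T_{\overline{b_x}}$ on $A^q_{\alpha}$, which makes $(T_{b_x})^{*}$ injective and hence forces $T_{b_x}$ to have dense range.
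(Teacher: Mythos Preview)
Your overall strategy is sound and, once patched, gives a correct proof that differs from the paper's. There is one inaccuracy worth flagging: the identification of $(A^q_\alpha)^*$ with $A^p_\alpha$ via the $A^2_\alpha$ pairing is an isomorphism but \emph{not} an isometry when $p\neq 2$ (the dual extremal function $|f|^{p-2}\bar f/\|f\|_p^{p-1}$ is not holomorphic, so one has to project). Hence $\norm{(U^{(q,\alpha)}_z)^*}_{\mathcal{L}(A^p_\alpha,A^p_\alpha)}$ is only $\le C(p,\alpha,n)$, not $=1$, and $T_{b_z}$ need not be an isometry. This does not break your argument: the factorizations still give the \emph{uniform} bounds $\norm{T_{b_z}}\le C$ and $\norm{T_{b_z}^{-1}}\le C$, whence $\norm{T_{b_z}f}\ge C^{-1}\norm{f}$; strong convergence then yields $\norm{T_{b_x}f}\ge C^{-1}\norm{f}$, so $T_{b_x}$ is bounded below with closed range. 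The dual-side step and the resolvent identity go through unchanged with the constant $C$ in place of $1$.

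As for the comparison: the paper takes a different route to invertibility. It invokes Proposition~\ref{WOTCon} with $S=Id$ to produce a WOT limit $Q$ of the net $T_{b_{z_\omega}}^{-1}=U^{(p,\alpha)}_{z_\omega}(U^{(q,\alpha)}_{z_\omega})^*$, and then checks directly that $T_{b_x}Q=Id$ (and, by the adjoint argument on $A^q_\alpha$, that $QT_{b_x}=Id$) via a pairing computation using the already-known SOT convergence $T_{\overline{b}_{z_\omega}}\to T_{\overline{b}_x}$. Your approach avoids the WOT machinery entirely and is more elementary: uniform lower bounds pass under strong limits, and injectivity of the adjoint forces dense (hence full) range. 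The paper's route has the advantage of producing the inverse as an explicit limit object; yours has the advantage of not needing Proposition~\ref{WOTCon} at all. Both proofs end identically with the resolvent identity for the SOT convergence of the inverses.
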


\begin{proof}
By Proposition \ref{WOTCon} applied to the operator $S=Id_{A^p_\alpha}$ we have that
$U_{z_{\omega}}^{(p,\alpha)}\left(U_{z_{\omega}}^{(q,\alpha)}\right)^*=T_{b_{z_{\omega}}}^{-1}$ has a weak operator limit
in $\mathcal{L}(A^p_{\alpha}, A^p_{\alpha})$, denote this by $Q$.  The Uniform Boundedness Principle then says that
there is a constant $C$ such that $\norm{T_{b_{z_{\omega}}}^{-1}}_{\mathcal{L}(A^p_{\alpha}, A^p_{\alpha})}\leq C$
for all $\omega$.  Then, given $f\in A^p_{\alpha}$ and $g\in A^q_{\alpha}$, since we know that
$$
\norm{\left(T_{\overline{b}_{z_{\omega}}}-T_{\overline{b}_{x}}\right)g}_{A^q_{\alpha}}\to 0,
$$
we have
\begin{eqnarray*}
\ip{T_{b_x}Qf}{g}_{A^2_{\alpha}}=\ip{Qf}{T_{\overline{b}_x}g}_{A^2_{\alpha}} & = & \lim_{\omega}\ip{T^{-1}_{b_{z_{\omega}}}f}{T_{\overline{b}_x}g}_{A^2_{\alpha}}\\
& = & \lim_{\omega}\left(\ip{T^{-1}_{b_{z_{\omega}}}f}{\left(T_{\overline{b}_x}-T_{\overline{b}_{z_{\omega}}}\right)g}_{A^2_{\alpha}}+\ip{T^{-1}_{b_{z_{\omega}}}f}{T_{\overline{b}_{z_{\omega}}}g}_{A^2_{\alpha}}\right)\\
& = & \ip{f}{g}_{A^2_{\alpha}}+\lim_{\omega}\ip{T^{-1}_{b_{z_{\omega}}}f}{\left(T_{\overline{b}_x}-T_{\overline{b}_{z_{\omega}}}\right)g}_{A^2_{\alpha}}=\ip{f}{g}_{A^2_\alpha}.
\end{eqnarray*}
This gives $T_{b_x}Q=Id_{A^p_{\alpha}}$.  Since taking adjoints is a continuous operation in the $WOT$,
$T_{\overline{b}_{z_{\omega}}}^{-1}\to Q^*$, and interchanging the roles of $p$ and $q$,
we have $T_{\overline{b}_x}Q^*=Id_{A^q_{\alpha}}$, which implies that $QT_{b_x}=Id_{A^p_{\alpha}}$.
So, $Q=T_{b_x}^{-1}$ and $T_{b_{z_{\omega}}}^{-1}\to T_{b_x}^{-1}$ in the weak operator topology.  Finally,
$$
T_{b_{z_{\omega}}}^{-1}-T_{b_x}^{-1}=T_{b_{z_{\omega}}}^{-1}\left(T_{b_x}-T_{b_{z_{\omega}}}\right) T_{b_x}^{-1},
$$
and since $\norm{T_{b_{z_{\omega}}}^{-1}}_{\mathcal{L}(A^p_{\alpha}, A^p_{\alpha})}\leq C$ and $T_{b_{z_{\omega}}}-T_{b_x}\to 0$
in the strong operator topology, we also have $T_{b_{z_{\omega}}}^{-1}\to T_{b_x}^{-1}$ in the strong operator topology as claimed.
\end{proof}

\begin{prop}
\label{SOTCon}
If $S\in\mathcal{T}_{p,\alpha}$ and $(z_{\omega})$ is a net in $\B_n$ that tends to $x\in M_{\mathcal{A}}$, then $S_{z_{\omega}}\to S_x$ in the strong operator topology.  In particular, $\Psi_S:\B_n\to (\mathcal{L}(A^p_{\alpha},A^p_{\alpha}), SOT)$ extends continuously to $M_{\mathcal{A}}$.
\end{prop}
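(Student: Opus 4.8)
The plan is to exploit the algebraic structure of the conjugation $S\mapsto S_z$. Writing $\Lambda_z(S):=U^{(p,\alpha)}_z S\,U^{(p,\alpha)}_z$, the involution identity $U^{(p,\alpha)}_z U^{(p,\alpha)}_z=Id$ makes $\Lambda_z$ an isometric algebra automorphism of $\mathcal{L}(A^p_\alpha,A^p_\alpha)$, and since $\left(U^{(q,\alpha)}_z\right)^*=U^{(p,\alpha)}_z T_{b_z}^{-1}$ (obtained from $U^{(p,\alpha)}_z\left(U^{(q,\alpha)}_z\right)^*=T_{b_z}^{-1}$ and the involution property) we get the clean factorization
\begin{equation*}
S_z=\Lambda_z(S)\,T_{b_z}^{-1}.
\end{equation*}
By Lemma \ref{Inverse} we have $T_{b_{z_\omega}}^{-1}\to T_{b_x}^{-1}$ in the strong operator topology, with a uniform norm bound, and the product of two uniformly bounded SOT-convergent nets converges in SOT. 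Thus the proposition reduces to showing that $\Lambda_{z_\omega}(S)$ converges in the strong operator topology.

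Next I would reduce to a single Toeplitz generator. The collection of $S$ for which $\Lambda_{z_\omega}(S)$ converges in SOT along every net $z_\omega\to x$ is a norm-closed subspace, because $\Lambda_z$ is isometric and the usual $\varepsilon/3$ argument applies; it is moreover an algebra, since $\Lambda_{z_\omega}(ST)=\Lambda_{z_\omega}(S)\Lambda_{z_\omega}(T)$ and products of uniformly bounded SOT-convergent nets converge in SOT. By Theorem \ref{Sua2Thm}, $\mathcal{T}_{p,\alpha}$ is the closed algebra generated by $\{T_a:a\in\mathcal{A}\}$, so it suffices to prove that $\Lambda_{z_\omega}(T_a)$ converges in SOT for each $a\in\mathcal{A}$.

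The generator step is the crux. A change of variables $w=\varphi_z(v)$, using the Möbius identities for $1-\varphi_z(v)\overline z$ and for $1-\varphi_z(\zeta)\overline{\varphi_z(v)}$, shows that $\Lambda_z(T_a)$ is the integral operator
\begin{equation*}
\Lambda_z(T_a)h(\zeta)=\int_{\B_n} a(\varphi_z(v))\,\frac{(1-\zeta\overline z)^{(n+1+\alpha)s}}{(1-v\overline z)^{(n+1+\alpha)s}}\,\frac{h(v)}{(1-\zeta\overline v)^{n+1+\alpha}}\,dv_\alpha(v),\qquad s=\tfrac1q-\tfrac1p,
\end{equation*}
equivalently $\Lambda_z(T_a)=T_{g_z}T_{a\circ\varphi_z}T_{1/g_z}$ with the analytic symbol $g_z(\zeta)=(1-\zeta\overline z)^{(n+1+\alpha)s}$. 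As $z_\omega\to x$, Lemma \ref{Maximal3} gives $a\circ\varphi_{z_\omega}\to a\circ\varphi_x$ and $z_\omega\to\pi(x)$ in the Euclidean metric, so the kernels converge pointwise and boundedly on $\B_n\times\B_n$. Since $\|U^{(p,\alpha)}_z\|=1$, the operators $\Lambda_z(T_a)$ are uniformly bounded, and it therefore suffices to prove the norm convergence $\Lambda_{z_\omega}(T_a)h\to\Lambda_x(T_a)h$ on the dense set $h\in H^\infty$; this follows from dominated convergence, the integrable majorant being supplied by the growth estimate of Lemma \ref{Growth}. The main obstacle lives precisely here: in the factorization $T_{g_z}T_{a\circ\varphi_z}T_{1/g_z}$ one of the analytic multipliers $g_z^{\pm1}$ has symbol $(1-\,\cdot\,\overline z)^{\pm(n+1+\alpha)s}$ whose sup-norm blows up as $z\to\partial\B_n$, so one cannot pass to the SOT limit factor by factor; the argument must use the uniform bound on the full product together with convergence on a dense set rather than on the individual pieces.

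Assembling the estimates, $(T_a)_{z_\omega}=\Lambda_{z_\omega}(T_a)T_{b_{z_\omega}}^{-1}\to(T_a)_x$ in SOT, and propagating through the closed subalgebra yields $S_{z_\omega}\to S_x$ in SOT for every $S\in\mathcal{T}_{p,\alpha}$; the SOT limit coincides with the weak operator limit $S_x$ furnished by Proposition \ref{WOTCon} by uniqueness of weak limits. Finally, the continuous SOT-extension of $\Psi_S$ to $M_{\mathcal{A}}$ follows from this net-convergence statement together with the density of $\B_n$ in $M_{\mathcal{A}}$, the uniform operator bound, and the metrizability of norm-bounded sets in $(\mathcal{L}(A^p_\alpha,A^p_\alpha),\mathrm{SOT})$ coming from the separability of $A^p_\alpha$, in the same manner as the weak extension was obtained in Proposition \ref{WOTCon}.
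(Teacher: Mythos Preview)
Your structural reduction is sound and quite elegant: the factorization $S_z=\Lambda_z(S)\,T_{b_z}^{-1}$ with $\Lambda_z$ an isometric algebra automorphism, the closed-subalgebra argument, and the appeal to Theorem \ref{Sua2Thm} to pass to generators $T_a$ with $a\in\mathcal{A}$ are all correct and mirror the paper's strategy. The gap is at the generator step, where you assert that $\Lambda_{z_\omega}(T_a)h\to\Lambda_x(T_a)h$ in $A^p_\alpha$ for $h\in H^\infty$ ``by dominated convergence, the integrable majorant being supplied by Lemma \ref{Growth}.'' This majorant does not exist for all $p$. In your kernel, the factor $(1-v\overline z)^{-(n+1+\alpha)s}$ (when $s>0$, i.e.\ $p>2$) can only be dominated uniformly along the net by $(1-|v|)^{-(n+1+\alpha)s}$, and then the inner integral requires $\alpha-(n+1+\alpha)s>-1$, which fails once $p\ge \frac{2(n+1+\alpha)}{n}$; a symmetric obstruction arises for small $p$ in the outer $L^p_\alpha$-integral. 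So the dominated-convergence route, as stated, breaks down precisely when $p$ is far from $2$.

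The paper sidesteps this entirely by noticing that the troublesome analytic factor $g_z$ can be traded for the \emph{unimodular} symbol $b_z=\overline{g_z}/g_z$: one computes directly
\[
(T_a)_z \;=\; T_{b_z}^{-1}\,T_{(a\circ\varphi_z)\,b_z}\,T_{b_z}^{-1},
\]
so that every Toeplitz factor has a symbol of $L^\infty$-norm at most $\|a\|_\infty$, uniformly in $z$. Then $T_{(a\circ\varphi_{z_\omega})b_{z_\omega}}\to T_{(a\circ\varphi_x)b_x}$ in SOT (bounded symbols converging uniformly on compacta), $T_{b_{z_\omega}}^{-1}\to T_{b_x}^{-1}$ in SOT by Lemma \ref{Inverse}, and the product of uniformly bounded SOT-convergent nets converges. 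In your own framework the missing one-line observation is
\[
\Lambda_z(T_a)\;=\;(T_a)_z\,T_{b_z}\;=\;T_{b_z}^{-1}\,T_{(a\circ\varphi_z)\,b_z},
\]
which replaces your unbounded factorization $T_{g_z}T_{a\circ\varphi_z}T_{1/g_z}$ by a product of two uniformly bounded, SOT-convergent operators and makes any kernel estimate unnecessary.
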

\begin{proof}
First observe that if $A, B\in \mathcal{L}(A^p_{\alpha}, A^p_{\alpha})$ then
\begin{eqnarray*}
(AB)_z= U^{(p,\alpha)}_zAB(U^{(q,\alpha)}_z)^{*} & = & U^{(p,\alpha)}_zA(U^{(q,\alpha)}_z)^{*}(U^{(q,\alpha)}_z)^{*} U^{(p,\alpha)}_zU^{(p,\alpha)}_zB(U^{(q,\alpha)}_z)^{*}\\
 & = & A_z T_{b_z}B_z.
\end{eqnarray*}
In general, this applies to longer products of operators.

For $S\in \mathcal{T}_{p,\alpha}$ and $\epsilon>0$, by Theorem \ref{Sua2Thm}
there is a finite sum of finite products of Toeplitz operators with symbols in $\mathcal{A}$ such that
$\norm{R-S}_{\mathcal{L}(A^p_{\alpha},A^p_{\alpha})}<\epsilon$, and so
$\norm{R_z-S_z}_{\mathcal{L}(A^p_{\alpha},A^p_{\alpha})}<C(p,\alpha)\epsilon$.
Passing to the $WOT$ limit we have $\norm{R_x-S_x}_{\mathcal{L}(A^p_{\alpha},A^p_{\alpha})}<C(p,\alpha)\epsilon$
for all $x\in M_{\mathcal{A}}$.
These observations imply that it suffices to prove the Lemma for $R$ alone, and then by linearity,
it suffices to consider the special case $R=\prod_{j=1}^m T_{a_j}$, where $a_j\in\mathcal{A}$.
A simple computation shows that
$$
U^{(2,\alpha)}_z T_a U^{(2,\alpha)}_z= T_{a\circ\varphi_z}
$$
and more generally,
\begin{eqnarray*}
(T_a)_z & = & U^{(p,\alpha)}_z\left(U^{(q,\alpha)}_z\right)^*\left(U^{(q,\alpha)}_z\right)^*T_a U^{(p,\alpha)}_zU^{(p,\alpha)}_z\left(U^{(q,\alpha)}_z\right)^*\\
& = & U^{(p,\alpha)}_z\left(U^{(q,\alpha)}_z\right)^*\left(T_{\overline{J}_z^{1-\frac{2}{q}}}U^{(2,\alpha)}_zT_aU^{(2,\alpha)}_zT_{J_z^{1-\frac{2}{p}}}\right)U^{(p,\alpha)}_z\left(U^{(q,\alpha)}_z\right)^*\\
& = & T_{b_{z}}^{-1}T_{(a\circ\varphi_z) b_z}T_{b_{z}}^{-1}.
\end{eqnarray*}
We now combine this computation with the observation at the beginning of the lemma to see that
\begin{eqnarray*}
\left(\prod_{j=1}^{m} T_{a_j}\right)_z & = & (T_{a_1})_z T_{b_z}\cdots T_{b_z}(T_{a_m})_z\\
 & = & T^{-1}_{b_z} T_{(a_1\circ\varphi_z) b_z}T^{-1}_{b_z}T_{(a_2\circ\varphi_z) b_z}T^{-1}_{b_z}\cdots
 T^{-1}_{b_z}T_{(a_m\circ\varphi_z) b_z}T^{-1}_{b_z}.
\end{eqnarray*}
But, since the product of $SOT$ nets is $SOT$ convergent, Lemma \ref{Inverse} and the fact that
$T_{(a\circ\varphi_{z_\omega})b_{z_{\omega}}}\to T_{(a\circ\varphi_x) b_x}$ in the $SOT$, give
$$
\left(\prod_{j=1}^{m} T_{a_j}\right)_{z_\alpha}\to T^{-1}_{b_x}
T_{(a_1\circ\varphi_x) b_x}T^{-1}_{b_x}T_{(a_2\circ\varphi_x) b_x}T^{-1}_{b_x}\cdots T^{-1}_{b_x}
T_{(a_m\circ\varphi_x) b_x}T^{-1}_{b_x}.
$$
But this is exactly the statement $R_{z_{\omega}}\to R_x$ in the $SOT$ for the operator $\prod_{j=1}^{m} T_{a_j}$,
and proves the claimed continuous extension.
\end{proof}

The next result gives information about the Berezin transform vanishing in terms of the operators $S_x$.

\begin{prop}
\label{BerezinVanish}
Let $S\in\mathcal{L}(A^p_{\alpha},A^p_{\alpha})$.  Then $B(S)(z)\to 0$ as $\abs{z}\to 1$ if and only if\/ $S_x=0$
for all\/ $x\in M_{\mathcal{A}}\setminus\B_n$.
\end{prop}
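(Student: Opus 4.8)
\emph{Proof proposal.} The plan is to reduce both implications to a single covariance identity for the Berezin transform,
$$
\abs{B(S_z)(\xi)} = \abs{B(S)(\varphi_z(\xi))} \qquad (z,\xi\in\B_n),
$$
which I would extract from \eqref{Comp}. Starting from
$$
B(S_z)(\xi) = \ip{S_z k_\xi^{(p,\alpha)}}{k_\xi^{(q,\alpha)}}_{A^2_\alpha}
= \ip{S\,(U^{(q,\alpha)}_z)^* k_\xi^{(p,\alpha)}}{(U^{(p,\alpha)}_z)^* k_\xi^{(q,\alpha)}}_{A^2_\alpha}
$$
and evaluating both adjoints by \eqref{Comp} together with its $p\leftrightarrow q$ analogue $(U^{(q,\alpha)}_z)^* k_\xi^{(p,\alpha)} = \lambda_{(q,\alpha)}(\xi,z) k_{\varphi_z(\xi)}^{(p,\alpha)}$, one obtains
$$
B(S_z)(\xi) = \lambda_{(q,\alpha)}(\xi,z)\,\overline{\lambda_{(p,\alpha)}(\xi,z)}\; B(S)(\varphi_z(\xi)),
$$
and the two scalars are unimodular. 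Specializing $\xi=0$, where $k_0^{(p,\alpha)}=k_0^{(q,\alpha)}=1$ and $\varphi_z(0)=z$, gives the useful special case $\abs{\ip{S_z 1}{1}_{A^2_\alpha}} = \abs{B(S)(z)}$.

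Before using the identity I would record two facts about a net $z_\omega\to x$ in $M_{\mathcal{A}}$. First, by Proposition \ref{WOTCon} we have $\ip{S_{z_\omega}f}{g}_{A^2_\alpha}\to\ip{S_x f}{g}_{A^2_\alpha}$ for all $f\in A^p_\alpha,\ g\in A^q_\alpha$; in particular $B(S_{z_\omega})(\xi)\to B(S_x)(\xi)$ for each fixed $\xi$. Second, $x\in M_{\mathcal{A}}\setminus\B_n$ if and only if $\abs{z_\omega}\to 1$: if $\abs{z_\omega}\not\to 1$ then a subnet remains in some Euclidean-compact $\overline{r\B_n}$, whose $M_{\mathcal{A}}$-closure is its Euclidean closure, forcing $x\in\B_n$; conversely the two topologies agree on $\B_n$. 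Combining this with the automorphism identity $1-\abs{\varphi_{z_\omega}(\xi)}^2 = \frac{(1-\abs{z_\omega}^2)(1-\abs{\xi}^2)}{\abs{1-\overline{z_\omega}\xi}^2}$ and $\abs{1-\overline{z_\omega}\xi}\ge 1-\abs{\xi}$, I conclude that when $x\notin\B_n$ we have $\abs{\varphi_{z_\omega}(\xi)}\to 1$ for every fixed $\xi\in\B_n$.

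For the forward direction, assume $B(S)(w)\to 0$ as $\abs{w}\to 1$, fix $x\in M_{\mathcal{A}}\setminus\B_n$ and a net $z_\omega\to x$. For each fixed $\xi$ the point $\varphi_{z_\omega}(\xi)$ tends to the boundary, so $B(S)(\varphi_{z_\omega}(\xi))\to 0$; the covariance identity gives $B(S_{z_\omega})(\xi)\to 0$, and WOT-continuity then yields $B(S_x)(\xi)=0$. Since this holds for every $\xi\in\B_n$ and $S_x\in\mathcal{L}(A^p_\alpha,A^p_\alpha)$, the injectivity of the Berezin transform noted in the Introduction forces $S_x=0$. For the converse, assume $S_x=0$ for all $x\in M_{\mathcal{A}}\setminus\B_n$ but suppose toward a contradiction that $B(S)$ does not vanish at the boundary, so there are $\epsilon>0$ and points $z_j$ with $\abs{z_j}\to 1$ and $\abs{B(S)(z_j)}\ge\epsilon$. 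By compactness of $M_{\mathcal{A}}$, pass to a subnet $z_{j_\omega}\to x$; since $\abs{z_{j_\omega}}\to 1$ we have $x\notin\B_n$. Then the $\xi=0$ case and WOT-continuity give $\abs{B(S)(z_{j_\omega})}=\abs{\ip{S_{z_{j_\omega}}1}{1}_{A^2_\alpha}}\to\abs{\ip{S_x 1}{1}_{A^2_\alpha}}=0$, contradicting $\abs{B(S)(z_{j_\omega})}\ge\epsilon$.

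The main obstacle is the forward direction: the $\xi=0$ relation only controls the scalar $\ip{S_x 1}{1}$, whereas $S_x=0$ requires killing every matrix coefficient. The decisive step is therefore to propagate the boundary decay through the full covariance identity to obtain $B(S_x)\equiv 0$ on all of $\B_n$, after which injectivity of the Berezin transform closes the argument; the supporting ingredients—WOT-continuity of $\Psi_S$ from Proposition \ref{WOTCon} and the geometric fact $\abs{\varphi_{z_\omega}(\xi)}\to 1$—are routine consequences of the earlier results.
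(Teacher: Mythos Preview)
Your proof is correct and follows essentially the same route as the paper's own argument: both establish the covariance identity $\abs{B(S_z)(\xi)}=\abs{B(S)(\varphi_z(\xi))}$ from \eqref{Comp}, then use the WOT-continuity of Proposition~\ref{WOTCon} together with $\abs{\varphi_{z_\omega}(\xi)}\to 1$ and injectivity of the Berezin transform for the forward direction, and a compactness/subnet contradiction at $\xi=0$ for the converse. If anything, you supply a bit more detail (the explicit justification that $x\notin\B_n\Leftrightarrow\abs{z_\omega}\to 1$ and the computation showing $\abs{\varphi_{z_\omega}(\xi)}\to 1$) than the paper does.
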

\begin{proof}
If $z,\xi\in\B_n$, then we have
\begin{eqnarray*}
B(S_z)(\xi) & = & \ip{S\left(U^{(q,\alpha)}\right)^*k_\xi^{(p,\alpha)}}{\left(U^{(p,\alpha)}\right)^*k_\xi^{(q,\alpha)}}_{A^2_{\alpha}}\\
& = & \lambda_{(q,\alpha)}(\xi,z)\overline{\lambda_{(p,\alpha)}(\xi,z)}\ip{Sk_{\varphi_z(\xi)}^{(p,\alpha)}}{k_{\varphi_z(\xi)}^{(q,\alpha)}}_{A^2_{\alpha}}.
\end{eqnarray*}
Thus, $\abs{B(S_z)(\xi)}=\abs{B(S)(\varphi_z(\xi))}$ since $\lambda_{(p,\alpha)}$ and $\lambda_{(q,\alpha)}$
are unimodular numbers.
For $x\in M_{\mathcal{A}}\setminus\B_n$ and $\xi\in\B_n$ fixed, if $(z_\omega)$ is a net in $\B_n$
tending to $x$, the continuity of $\Psi_S$ in the $WOT$ and Proposition \ref{WOTCon} give that
$B(S_{z_\omega})(\xi)\to B(S_x)(\xi)$, and consequently $\abs{B(S)(\varphi_{z_\omega}(\xi))}\to \abs{B(S_x)(\xi)}$.

Now, suppose that $B(S)(z)$ vanishes as $\abs{z}\to 1$.  Since $x\in M_{\mathcal{A}}\setminus\B_n$ and $z_\omega\to x$,
we have that $\abs{z_\omega}\to 1$, and similarly $\abs{\varphi_{z_{\omega}}(\xi)}\to 1$.
Since $B(S)(z)$ vanishes as we approach the boundary, $B(S_x)(\xi)=0$, and since $\xi\in\B_n$ was arbitrary and the
Berezin transform is one-to-one, we see that $S_x=0$.

Conversely, suppose that the Berezin transform does not vanish as we approach the boundary.
Then there is a sequence $\{z_k\}$ in $\B_n$ such that $\abs{z_k}\to 1$ and $\abs{B(S)(z_k)}\geq\delta>0$.
Since $M_{\mathcal{A}}$ is compact, we can extract a subnet $(z_{\omega})$ of $\{z_k\}$ converging in $M_{\mathcal{A}}$ to
$x\in M_{\mathcal{A}}\setminus\B_n$.  The computations above imply $\abs{B(S_x)(0)}\geq\delta>0$, which gives that $S_x\neq 0$.
\end{proof}

\section{Characterization of the Essential Norm on \texorpdfstring{$A^p_{\alpha}\ $}{Weighted Bergman Spaces}}
\label{Characterization}

We have now collected enough tools to provide a characterization of the essential norm of an operator on $A^p_{\alpha}$.
Fix $\varrho>0$ and let $\{w_m\}$ and $D_m$ be the sets of Lemma \ref{StandardGeo}.  Define the measure
$$
\mu_{\varrho}:=\sum_{m=1}^\infty v_{\alpha}(D_m) \delta_{w_m},
$$
which is well-known to be a $A^p_{\alpha}$ Carleson measure, and so $T_{\mu_\varrho}:A^p_{\alpha}\to A^p_{\alpha}$ is bounded.  The following Lemma can be deduced from results in Luecking, \cite{L}, or work of Coifman and Rochberg, \cite{CR}, and we omit the proof.
\begin{lm}
$T_{\mu_\varrho}\to Id_{A^p_\alpha}$ on $\mathcal{L}(A^p_{\alpha},A^p_{\alpha})$ when $\varrho\to 0$.
\end{lm}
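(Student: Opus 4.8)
The plan is to prove the equivalent statement $\norm{Id_{A^p_\alpha}-T_{\mu_\varrho}}_{\mathcal{L}(A^p_\alpha,A^p_\alpha)}\to 0$ as $\varrho\to 0$, by recognizing $Id-T_{\mu_\varrho}$ as a quadrature error and estimating it by duality against the machinery already developed in Section \ref{UniformAlg}. Write $K_w(z)=(1-\overline{w}z)^{-(n+1+\alpha)}$, so that the reproducing formula $f=P_\alpha f$ on $A^p_\alpha$ together with the disjoint decomposition $\B_n=\bigcup_m D_m$ of Lemma \ref{StandardGeo} gives, for $f\in A^p_\alpha$,
\[
(Id-T_{\mu_\varrho})f(z)=\sum_{m}\int_{D_m}\big[f(w)K_w(z)-f(w_m)K_{w_m}(z)\big]\,dv_\alpha(w),
\]
i.e.\ the error of replacing the Bergman integral on each $D_m$ by its one-point quadrature weighted by $v_\alpha(D_m)$. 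I would control this in $A^p_\alpha=L^p_\alpha$ by pairing against $g\in L^q_\alpha$ with $\norm{g}_{L^q_\alpha}\le 1$.

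The key device is to transfer the kernel onto $g$. Since $\int_{\B_n}K_w(z)\overline{g(z)}\,dv_\alpha(z)=\overline{(P_\alpha g)(w)}$, integrating the display above against $\overline{g}$ (the interchange of sum and integral being justified by absolute convergence) and setting $G:=P_\alpha g\in A^q_\alpha$ and $\psi:=f\overline{G}$ turns the pairing into a quadrature error for the single function $\psi$:
\[
\ip{(Id-T_{\mu_\varrho})f}{g}_{A^2_\alpha}=\sum_m\int_{D_m}\big[\psi(w)-\psi(w_m)\big]\,dv_\alpha(w).
\]
This is exactly the type of expression handled in Lemma \ref{new1}: $\psi=f\overline{G}$ with $f\in A^p_\alpha$ and $G\in A^q_\alpha$. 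The point of the reduction is that although neither $K_w$ nor $f$ alone has a useful pointwise $\rho$-modulus of continuity near the boundary, the product $f\overline{G}$ does.

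For $w\in D_m\subset D(w_m,\varrho)$ one has $\rho(w,w_m)\le\tanh\varrho$, so the oscillation inequality established inside the proof of Lemma \ref{new1} (the display following \eqref{rba}, with $w=w_m$) yields
\[
|\psi(w)-\psi(w_m)|\lesssim(\tanh\varrho)\int_{\B_n}\big(|G(\zeta)|\,|\widetilde{\nabla} f(\zeta)|+|f(\zeta)|\,|\widetilde{\nabla} G(\zeta)|\big)\frac{(1-|\zeta|^2)^{n+1+\alpha}}{|1-\overline{\zeta}w_m|^{2(n+1+\alpha)}}\,dv_\alpha(\zeta).
\]
Multiplying by $v_\alpha(D_m)$, summing in $m$, and interchanging integrations, the weights reassemble: using $|1-\overline{\zeta}w_m|\approx|1-\overline{\zeta}w|$ on $D_m$ one gets $\sum_m v_\alpha(D_m)|1-\overline{\zeta}w_m|^{-2(n+1+\alpha)}\approx\int_{\B_n}|1-\overline{\zeta}w|^{-2(n+1+\alpha)}\,dv_\alpha(w)$, which by Lemma \ref{Growth} is $\approx(1-|\zeta|^2)^{-(n+1+\alpha)}$. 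This cancels the weight $(1-|\zeta|^2)^{n+1+\alpha}$ and collapses the estimate to
\[
\big|\ip{(Id-T_{\mu_\varrho})f}{g}_{A^2_\alpha}\big|\lesssim(\tanh\varrho)\int_{\B_n}\big(|G|\,|\widetilde{\nabla} f|+|f|\,|\widetilde{\nabla} G|\big)\,dv_\alpha.
\]
Now H\"older's inequality, the gradient bounds $\norm{\widetilde{\nabla} f}_{L^p_\alpha}\lesssim\norm{f}_{A^p_\alpha}$ and $\norm{\widetilde{\nabla} G}_{L^q_\alpha}\lesssim\norm{G}_{A^q_\alpha}$ from \cite{Zhu}*{Theorem 2.16}, and $\norm{G}_{A^q_\alpha}=\norm{P_\alpha g}_{A^q_\alpha}\lesssim\norm{g}_{L^q_\alpha}\le 1$, bound the right-hand side by $C(\tanh\varrho)\norm{f}_{A^p_\alpha}$. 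Taking the supremum over $g$ gives $\norm{Id-T_{\mu_\varrho}}_{\mathcal{L}(A^p_\alpha,A^p_\alpha)}\lesssim\tanh\varrho\to 0$, as claimed.

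I expect the genuine content to lie entirely in the duality reduction to $\psi=f\overline{G}$ and the invariant-oscillation estimate of Lemma \ref{new1}. A naive direct splitting of the integrand into a ``value-variation'' term $[f(w)-f(w_m)]K_w$ and a ``kernel-variation'' term $f(w_m)[K_w-K_{w_m}]$ does \emph{not} succeed, because $K_w$ fails to be uniformly Lipschitz in $\rho$ near the boundary: by \eqref{zzhu} its invariant gradient is of order $(1-|w|^2)^{-1/2}|K_w|$ in complex-tangential directions, so the crucial factor $\tanh\varrho$ cannot be extracted from the kernel in isolation. Routing the whole argument through the product $f\overline{G}$, exactly as in the Berezin-transform estimates of Section \ref{UniformAlg}, is what makes the gain available and uniform in $z$.
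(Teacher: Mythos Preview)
Your argument is correct. The paper itself gives no proof at all: it simply cites Luecking \cite{L} and Coifman--Rochberg \cite{CR} and omits the details. Your route is genuinely different and, in a sense, more in the spirit of the paper, since it recycles the oscillation estimate embedded in the proof of Lemma~\ref{new1} rather than appealing to external atomic-decomposition results. The duality trick that turns the quadrature error into $\sum_m\int_{D_m}[\psi(w)-\psi(w_m)]\,dv_\alpha$ with $\psi=f\overline{G}$ is exactly the right move, and your diagnosis of why a naive kernel-splitting fails is accurate. Two minor remarks: (i) the pointwise oscillation bound from Lemma~\ref{new1} was derived under $|u|\le r\le\frac14$, so your argument as written covers $\tanh\varrho\le\frac14$, which of course suffices for $\varrho\to 0$; (ii) the comparison constants in $|1-\overline{\zeta}w|\approx|1-\overline{\zeta}w_m|$ on $D_m$ depend only on an upper bound for $\varrho$ (they improve to $1$ as $\varrho\to 0$), so the resummation step is uniform. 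What your approach buys over the cited literature is a self-contained argument inside the paper's own framework and an explicit rate $O(\tanh\varrho)$; what the Luecking/Coifman--Rochberg approach buys is that it is standard and does not rely on the somewhat delicate invariant-gradient machinery of Section~\ref{UniformAlg}.
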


Now choose $0<\varrho\leq 1$ so that $\norm{T_{\mu_\varrho}-Id_{A^p_\alpha}}_{\mathcal{L}(A^p_{\alpha},A^p_{\alpha})}<\frac{1}{4}$ and consequently $\norm{T_{\mu_\varrho}}_{\mathcal{L}(A^p_{\alpha}, A^p_{\alpha})}$ and $\norm{T_{\mu_\varrho}^{-1}}_{\mathcal{L}(A^p_{\alpha}, A^p_{\alpha})}$ are less than $\frac{3}{2}$.  Fix this value of $\varrho$, and denote $\mu_\varrho:=\mu$ for the rest of the paper.

For $S\in\mathcal{L}(A^p_{\alpha}, A^p_{\alpha})$ and $r>0$, let
$$
\mathfrak{a}_S(r):=\varlimsup_{\abs{z}\to 1}\sup\left\{\norm{Sf}_{A^p_{\alpha}}: f\in T_{\mu 1_{D(z,r)}}(A^p_{\alpha}),\norm{f}_{A^p_{\alpha}}\leq 1\right\},
$$
and then define
$$
\mathfrak{a}_S:=\lim_{r\to 1}\mathfrak{a}_S(r).
$$
Since for $r_1<r_2$ we have $T_{\mu 1_{D(z,r_1)}}(A^p_{\alpha})\subset T_{\mu 1_{D(z,r_2)}}(A^p_{\alpha})$ and
$\mathfrak{a}_S(r)\leq\norm{S}_{\mathcal{L}(A^p_{\alpha},A^p_{\alpha})}$, this limit is well defined.
We define two other measures of the size of an operator which are given in a very intrinsic and geometric way:
\begin{eqnarray*}
\mathfrak{b}_{S} & := & \sup_{r>0} \varlimsup_{\abs{z}\to 1}\norm{M_{1_{D(z,r)}}S} _{\mathcal{L}(A^p_{\alpha},L^p_{\alpha})},\\
\mathfrak{c}_{S} & := & \lim_{r\to 1}\norm{M_{1_{(r\B_n)^c}}S}_{\mathcal{L}(A^p_{\alpha},L^p_{\alpha})}.
\end{eqnarray*}
In the last definition, for notational simplicity, we let $\left(r\B_n\right)^c=\B_n\setminus r\B_n$.  Finally, for $S\in \mathcal{L}(A^p_{\alpha},A^p_{\alpha})$ recall that
$$
\norm{S}_e=\inf\left\{\norm{S-Q}_{\mathcal{L}(A^p_{\alpha},A^p_{\alpha})}:Q\textnormal{ is compact}\right\}.
$$

We first show how to compute the essential norm of an operator $S$ in terms of the operators $S_x$,
where $x\in M_{\mathcal{A}}\setminus\B_n$.

\begin{thm}
\label{EssentialviaSx}
Let $S\in\mathcal{T}_{p,\alpha}$.  Then there exists a constant $C(p,\alpha,n)$ such that
\begin{equation}
\label{EssentialNorm&Sx}
\sup_{x\in M_{\mathcal{A}}\setminus\B_n}\norm{S_x}_{\mathcal{L}(A^p_\alpha, A^p_{\alpha})}\lesssim\norm{S}_e\lesssim \sup_{x\in M_{\mathcal{A}}\setminus\B_n}\norm{S_x}_{\mathcal{L}(A^p_\alpha, A^p_{\alpha})}.
\end{equation}
\end{thm}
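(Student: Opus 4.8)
The plan is to prove the two inequalities in \eqref{EssentialNorm&Sx} separately. The left-hand (lower) bound for $\norm{S}_e$ is the softer one: it rests on the fact that conjugation by the ``translation'' operators $U_z$ annihilates any compact operator as $\abs{z}\to 1$. The right-hand (upper) bound is the substantive direction, and this is where the segmentation result, Theorem~\ref{Approx3}, combined with the atomic structure of $\mu=\mu_\varrho$, does the work.

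For the lower bound, fix $x\in M_{\mathcal{A}}\setminus\B_n$ and a net $z_\omega\to x$ in $\B_n$; since $x\notin\B_n$ we have $\abs{z_\omega}\to 1$. The key point is that $(U^{(q,\alpha)}_{z_\omega})^\ast\to 0$ in the weak operator topology: by \eqref{Comp} (with $p$ and $q$ interchanged) it sends each $k^{(p,\alpha)}_\xi$ to a unimodular multiple of $k^{(p,\alpha)}_{\varphi_{z_\omega}(\xi)}$, and these normalized kernels tend weakly to $0$ because $\abs{\varphi_{z_\omega}(\xi)}\to 1$; the density of the span of the $k^{(p,\alpha)}_\xi$ together with the uniform boundedness of the $U$'s promotes this to all of $A^p_\alpha$. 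Consequently, for any compact $Q$ and any $g\in A^p_\alpha$, compactness forces $Q(U^{(q,\alpha)}_{z_\omega})^\ast g\to 0$ in norm, hence $Q_{z_\omega}g\to 0$. Invoking Proposition~\ref{WOTCon} (so that $S_{z_\omega}\to S_x$ weakly) together with the uniform estimate $\norm{(S-Q)_{z_\omega}}_{\mathcal{L}(A^p_\alpha,A^p_\alpha)}\le C(p,\alpha)^2\norm{S-Q}$, we obtain for all $g,h$ that $\abs{\ip{S_x g}{h}}\le C(p,\alpha)^2\norm{S-Q}\,\norm{g}\,\norm{h}$, and thus $\norm{S_x}\le C(p,\alpha)^2\norm{S-Q}$. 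Taking the infimum over compact $Q$ and the supremum over $x$ gives the left inequality.

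For the upper bound I would first reduce to $ST_\mu$: since $T_\mu=T_{\mu_\varrho}$ is invertible with $\norm{T_\mu^{-1}}<\tfrac32$, we have $\norm{S}_e\le\tfrac32\norm{ST_\mu}_e$. Apply Theorem~\ref{Approx3} to this $S$ and $\mu$ with a given $\epsilon$, producing sets $F_j\subset G_j$ (the $F_j$ disjoint and covering $\B_n$, the $G_j$ of bounded overlap and $\textnormal{diam}_\beta G_j\le d$) with $\norm{ST_\mu-\sum_j M_{1_{F_j}}ST_{\mu 1_{G_j}}}_{\mathcal{L}(A^p_\alpha,L^p_\alpha)}<\epsilon$; composing on the left with the bounded projection $P_\alpha$ keeps this within $\norm{P_\alpha}\epsilon$ in $\mathcal{L}(A^p_\alpha,A^p_\alpha)$ and turns each $M_{1_{F_j}}$ into $T_{1_{F_j}}$. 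The decisive structural fact is that $\mu_\varrho=\sum_m v_\alpha(D_m)\delta_{w_m}$ is atomic with $\varrho/4$-separated atoms. Split the indices into those with $G_j\cap r\B_n\neq\emptyset$ (``central'') and those with $G_j\subset(r\B_n)^c$ (``boundary''). A central $G_j$ lies in the compact hyperbolic ball $D(0,\textnormal{arctanh}\,r+d)$, which contains only finitely many atoms; since each atom belongs to at most $N$ of the $G_j$, only finitely many central indices carry any atom, and for each of these $T_{\mu 1_{G_j}}$ has finite rank. Hence the central partial sum is finite rank and drops out of the essential norm, so modulo compacts $ST_\mu$ is represented by the boundary sum $\sum_{j\in\mathcal{B}}T_{1_{F_j}}ST_{\mu 1_{G_j}}$.

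It remains to bound this boundary sum by $\sup_x\norm{S_x}$. Using the bounded overlap of $\{F_j\}$ and $\{G_j\}$ exactly as in Lemmas~\ref{Tech2} and~\ref{CM-Cor}, its norm is controlled by $\sup_{j\in\mathcal{B}}$ of the norms of the individual localized pieces $M_{1_{F_j}}ST_{\mu 1_{G_j}}$. For each piece I would conjugate by $U_{w_j}$, where $w_j$ is a center of $G_j$ (so $\abs{w_j}\to 1$ over $\mathcal{B}$ as $r\to 1$): the intertwining identities $(U^{(q,\alpha)}_{w})^\ast U^{(p,\alpha)}_w=T_{b_w}$ and the change of variable $\varphi_{w_j}$ recast the piece as $M_{\chi_j}S_{w_j}T_{\nu_j}$, with $\chi_j$ and $\nu_j$ supported in the fixed hyperbolic ball $D(0,d)$ about the origin. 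I expect this final step to be the main obstacle: because $S_{w_j}\to S_x$ holds only in the strong (or weak) operator topology, the operator norm is merely lower semicontinuous, so one cannot naively dominate $\varlimsup_j\norm{S_{w_j}}$ by $\norm{S_x}$. The resolution is once more the atomicity of $\mu_\varrho$: the pushed-forward measures $\nu_j$ are carried by a uniformly bounded number of atoms in the compact set $\overline{D(0,d)}$ with bounded weights, so along a subnet $w_j\to x\in M_{\mathcal{A}}\setminus\B_n$ the $T_{\nu_j}$ are uniformly finite rank and converge in norm; combined with $S_{w_j}\to S_x$ strongly this yields genuine norm convergence $S_{w_j}T_{\nu_j}\to S_xT_{\nu_x}$. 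Each localized piece is therefore bounded in the limit by $C\norm{S_x}\le C\sup_{x}\norm{S_x}$, and assembling the estimates over the two regions with the error from Theorem~\ref{Approx3} gives $\norm{S}_e\lesssim\sup_{x\in M_{\mathcal{A}}\setminus\B_n}\norm{S_x}$ upon letting $\epsilon\to 0$ and $r\to 1$.
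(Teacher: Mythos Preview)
Your plan is essentially the paper's own argument, including the two crucial ingredients: the atomicity of $\mu=\mu_\varrho$ (so each $T_{\mu 1_{G_j}}$ has uniformly bounded finite rank) and the SOT continuity from Proposition~\ref{SOTCon} to pass to the limit. The paper packages the upper bound through the intermediate quantity $\mathfrak{a}_S(r)=\varlimsup_{|z|\to 1}\sup\{\|Sg\|:g\in T_{\mu 1_{D(z,r)}}(A^p_\alpha),\ \|g\|\le 1\}$, proving $\mathfrak{a}_S(r)\lesssim\sup_x\|S_x\|$ here and deferring $\|S\|_e\lesssim\mathfrak{a}_S$ to the next theorem; you have merged these into one step.

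Two places where your write-up is loose deserve tightening. First, the sentence ``its norm is controlled by $\sup_{j\in\mathcal{B}}$ of the norms of the individual localized pieces $M_{1_{F_j}}ST_{\mu 1_{G_j}}$'' is not quite the right quantity: disjointness of the $F_j$ gives $\|\sum_j M_{1_{F_j}}ST_{\mu 1_{G_j}}f\|^p=\sum_j\|M_{1_{F_j}}ST_{\mu 1_{G_j}}f\|^p$, and one then factors through $\|T_{\mu 1_{G_j}}f\|$ to reach $\sup_j\{\|Sg\|:g\in T_{\mu 1_{G_j}}(A^p_\alpha),\ \|g\|=1\}$ times $\sum_j\|T_{\mu 1_{G_j}}f\|^p\lesssim\|f\|^p$. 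It is this restricted-range norm, not the full operator norm of the composition, that your subsequent analysis actually estimates. Second, the clean recasting ``$M_{\chi_j}S_{w_j}T_{\nu_j}$'' does not literally hold (the intertwining of $U_w^{(p,\alpha)}$ with $T_{\mu 1_{G_j}}$ produces extra unimodular factors and the $T_{b_w}$). The paper avoids this by working pointwise: for near-extremal $f_j\in T_{\mu 1_{D(z_j,r)}}(A^p_\alpha)$ it computes $(U_{z_j}^{(q,\alpha)})^\ast f_j$ directly as a finite linear combination of normalized kernels $k^{(p,\alpha)}_{\varphi_{z_j}(w_m)}$ sitting in $D(0,r)$, extracts a subsequence on which coefficients and base points converge, and then applies SOT convergence to the single limit vector $h$. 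This is exactly your finite-rank compactness argument, executed on vectors rather than operators, and it sidesteps the need for the operator identity you invoke.
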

\begin{proof}
For $S$ compact, \eqref{EssentialNorm&Sx} is easy to demonstrate.
Since $k_\xi^{(p,\alpha)}\to 0$ weakly as $\abs{\xi}\to 1$, then $\norm{Sk_\xi^{(p,\alpha)}}_{A^p_{\alpha}}$ goes to $0$ as well.
Thus, we have
\begin{equation}
\label{Vanishing}
\abs{B(S)(\xi)}=\abs{\ip{S k_\xi^{(p,\alpha)}}{k_\xi^{(q,\alpha)}}_{A^2_\alpha}}\leq
\norm{Sk_\xi^{(p,\alpha)}}_{A^p_{\alpha}}\norm{k_\xi^{(q,\alpha)}}_{A^q_{\alpha}}\approx\norm{Sk_\xi^{(p,\alpha)}}_{A^p_{\alpha}}.
\end{equation}
Hence, the compactness of $S$ implies that the Berezin transform vanishes as $\abs{\xi}\to 1$.
Then  Proposition \ref{BerezinVanish} gives that $S_x=0$ for all $x\in M_{\mathcal{A}}\setminus\B_n$.

Now let $S$ be any bounded operator on $A^p_{\alpha}$ and suppose that $Q$ is a compact operator on $A^p_{\alpha}$.
Select $x\in M_{\mathcal{A}}\setminus \B_n$ and a net $(z_\omega)$ in $\B_n$ tending to $x$.
Since the maps $U_{z_\omega}^{(p,\alpha)}$ and $U_{z_\omega}^{(q,\alpha)}$ are isometries on $A^p_{\alpha}$ and $A^q_{\alpha}$,
we have
$$
\norm{S_{z_{\omega}}+Q_{z_{\omega}}}_{\mathcal{L}(A^p_\alpha, A^p_{\alpha})}\leq\norm{S+Q}_{\mathcal{L}(A^p_\alpha, A^p_{\alpha})}.
$$
Since $S_{z_{\omega}}+Q_{z_{\omega}}\to S_x$ in the $WOT$, passing to the limit we get
$$
\norm{S_x}_{\mathcal{L}(A^p_\alpha, A^p_{\alpha})}\lesssim \varliminf\norm{S_{z_{\omega}}+Q_{z_{\omega}}}_{\mathcal{L}(A^p_\alpha, A^p_{\alpha})}\leq \norm{S+Q}_{\mathcal{L}(A^p_\alpha, A^p_{\alpha})},
$$
which gives
$$
\sup_{x\in M_{\mathcal{A}}\setminus\B_n}\norm{S_x}_{\mathcal{L}(A^p_\alpha, A^p_{\alpha})}\lesssim\norm{S}_e ,
$$
the first inequality in \eqref{EssentialNorm&Sx}.  It only remains to address the last inequality.  To accomplish this, we will instead prove that
\begin{equation}
\label{AlphaControlled}
\mathfrak{a}_S\lesssim \sup_{x\in M_{\mathcal{A}}\setminus\B_n}\norm{S_x}_{\mathcal{L}(A^p_\alpha, A^p_{\alpha})}.
\end{equation}
Then we compare this with the first inequality in \eqref{Redux2}, $\norm{S}_e\lesssim\mathfrak{a}_S$, shown below, to obtain
$$
\norm{S}_e\lesssim \sup_{x\in M_{\mathcal{A}}\setminus\B_n}\norm{S_x}_{\mathcal{L}(A^p_\alpha, A^p_{\alpha})}.
$$
Also note that if \eqref{AlphaControlled} holds, then
\begin{equation}
\label{LastStep}
\mathfrak{a}_S\lesssim\norm{S}_e
\end{equation}
is also true.  We now turn to addressing \eqref{AlphaControlled}.  It suffices to demonstrate that
$$
\mathfrak{a}_S(r)\lesssim\sup_{x\in M_{\mathcal{A}}\setminus\B_n}\norm{S_x}_{\mathcal{L}(A^p_\alpha,
A^p_{\alpha})}\quad\ \forall r>0.
$$
Fix a radius $r>0$. By the definition of $\mathfrak{a}_S(r)$ there is a sequence $\{z_j\}\subset\B_n$ tending to
$\partial\B_n$ and a normalized sequence of functions $f_j\in T_{\mu 1_{D(z_j,r)}}(A^p_{\alpha})$ with
$\norm{Sf_j}_{A^p_{\alpha}}\to\mathfrak{a}_S(r)$.  To each $f_j$ we have a corresponding $h_j\in A^p_{\alpha}$, and then
\begin{eqnarray*}
f_j(w)=T_{\mu 1_{D(z_j,r)}}h_j(w) & = & \sum_{w_m\in D(z_j,r)}\frac{v_\alpha(D_m)}{(1-\overline{w_m}w)^{n+1+\alpha}}h_j(w_m)\\
 & = & \sum_{w_m\in D(z_j,r)} a_{j,m} \frac{(1-\abs{w_m}^2)^{\frac{n+1+\alpha}{q}}}{(1-\overline{w_m}w)^{n+1+\alpha}}\\
 & = & \sum_{w_m\in D(z_j,r)} a_{j,m} k_{w_m}^{(p,\alpha)}(w),
\end{eqnarray*}
where $a_{j,m}=v_\alpha(D_m)(1-\abs{w_m}^2)^{-\frac{n+1+\alpha}{q}}h_j(w_m)$.  We then have that
$$
\left(U^{(q,\alpha)}_{z_j}\right)^{*}f_j(w)= \sum_{\varphi_{z_j}(w_m)\in D(0,r)} a_{j,m}' k_{\varphi_{z_j}(w_m)}^{(p,\alpha)}(w),
$$
where $a_{j,m}'$ is simply the original constant $a_{j,m}$ multiplied by the unimodular constant $\lambda_{(q,\alpha)}$.

Observe that the points $\abs{\varphi_{z_j}(w_m)}\leq\tanh r$.  For $j$ fixed, arrange the points $\varphi_{z_j}(w_m)$ such that $\abs{\varphi_{z_j}(w_m)}\leq\abs{\varphi_{z_j}(w_{m+1})}$ and $\textnormal{arg}\,\varphi_{z_j}(w_m)\leq\textnormal{arg}\, \varphi_{z_j}(w_{m+1})$.  Since the M\"obius map $\varphi_{z_j}$ preserves the hyperbolic distance between the points $\{w_m\}$ we have for $m\neq k$ that
$$
\beta(\varphi_{z_j}(w_m),\varphi_{z_j}(w_k))=\beta(w_m,w_k)\geq\frac{\varrho}{4}>0.
$$
Thus, there can only be at most $N_j\leq M(\varrho, r)$ points in the collection $\varphi_{z_j}(w_m)$ belonging to the disc $D(0,z_j)$.  By passing to a subsequence, we can assume that $N_j=M$ and is independent of $j$.

For the fixed $j$, and $1\leq m\leq M$, select $g_{j,k}\in H^\infty$ with
$\norm{g_{j,k}}_{H^\infty}\leq C(\tanh r,\frac{\varrho}{4})$, such that $g_{j,k}(\varphi_{z_j}(w_m))=\delta_{k,m}$,
the Kronecker delta, when $1\leq k\leq M$.  The existence of the functions is easy to deduce from a result of Berndtsson \cite{B}, see also \cite{Sua}.  We then have
\begin{eqnarray*}
\ip{\left(U_{z_j}^{(q,\alpha)}\right)^{*}f_j}{g_{j,k}}_{A^2_{\alpha}} & = & \sum_{\varphi_{z_j}(w_m)\in D(0,r)} a_{j,m}' g_{j,k}(\varphi_{z_j}(w_m))\left(1-\abs{\varphi_{z_j}(w_m)}^2\right)^{\frac{n+1+\alpha}{q}}\\
 & = & a_{j,k}'\left(1-\abs{\varphi_{z_j}(w_k)}^2\right)^{\frac{n+1+\alpha}{q}}.
\end{eqnarray*}
This expression implies that the sequence $\abs{a_{j,k}'}\leq C=C(n,p,\varrho,r,\alpha)$ independently of $j$ and $k$,
because $g_{j,k}\in H^\infty$ has norm controlled by $C(r,\varrho)$, $\left( U^{(q,\alpha)}_z\right)^*$ is a bounded operator, and $f_j$ is a normalized sequence of functions in $A^p_\alpha$.

Now $(\varphi_{z_j}(w_1),\ldots,\varphi_{z_j}(w_M), a_{1}',\ldots,a_{M}')\in\C^{M(n+1)}$ is a bounded sequence in $j$,
and passing to a subsequence if necessary, we can assume that converges to a point $(v_1,\ldots, v_M,a_{1}',\ldots a_{M}')$.
Here $\abs{v_k}\leq\tanh r$ and $\abs{a_k'}\leq C$.  This gives that
$$
\left(U^{(q,\alpha)}_{z_j}\right)^{*}f_j\to \sum_{k=1}^M a_k' k_{v_k}^{(p,\alpha)} := h
$$
in the $L^p_{\alpha}$ norm and moreover,
$$
\norm{\sum_{k=1}^M a_k' k_{v_k}^{(p,\alpha)}}_{L^p_{\alpha}}=\lim_{j}\norm{\left(U^{(q,\alpha)}_{z_j}\right)^{*}f_j}_{L^p_{\alpha}}\lesssim1.
$$
Since the operator $U_{z_j}^{(p,\alpha)}$ is isometric and $\norm{S_{z_j}}_{\mathcal{L}(A^p_\alpha, A^p_{\alpha})}$
is bounded independently of $j$,
$$
\mathfrak{a}_S(r)=\lim_{j}\norm{S f_j}_{A^p_\alpha}=\lim_{j}\norm{S_{z_j}(U_{z_j}^{(q,\alpha)})^* f_j}_{A^p_\alpha}=
\lim\norm{S_{z_j} h}_{A^p_\alpha}.
$$
Since $\abs{z_j}\to 1$, by using the compactness of $M_{\mathcal{A}}$ it is possible to extract a subnet $(z_\omega)$
which converges to some point $x\in M_{\mathcal{A}}\setminus\B_n$.  Then $S_{z_\omega}h\to S_x h$ in $A^p_{\alpha}$,  so
$$
\mathfrak{a}_S(r)=\lim_{\omega}\norm{S_{z_\omega} h}_{A^p_\alpha}=\norm{S_xh}_{A^p_\alpha}\lesssim\norm{S_x}_{\mathcal{L}(A^p_\alpha, A^p_{\alpha})}\lesssim\sup_{x\in M_{\mathcal{A}}\setminus\B_n}\norm{S_x}_{\mathcal{L}(A^p_\alpha, A^p_{\alpha})}.
$$
The above limit uses the continuity in the $SOT$ as guaranteed by Proposition \ref{SOTCon}.
\end{proof}

\begin{thm}
Let\/ $1<p<\infty$, $\,\alpha>-1$ and\/ $S\in \mathcal{T}_{p,\alpha}$.  Then there exist constants depending only on $n$, $p$, and $\alpha$ such that:
$$
\mathfrak{a}_S\approx\mathfrak{b}_S\approx\mathfrak{c}_S\approx\norm{S}_e.
$$
\end{thm}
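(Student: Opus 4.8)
The plan is to close a cycle of inequalities. I will record the two substantive bounds as
\begin{equation}\label{Redux2}
\norm{S}_e \lesssim \mathfrak{a}_S \qquad\text{and}\qquad \norm{S}_e \lesssim \mathfrak{b}_S ,
\end{equation}
and then combine them with the elementary tail estimates $\mathfrak{b}_S \le \mathfrak{c}_S \le \norm{S}_e$ and with the bound $\mathfrak{a}_S \lesssim \norm{S}_e$ already obtained in Theorem~\ref{EssentialviaSx} (namely \eqref{AlphaControlled} together with the first inequality of \eqref{EssentialNorm&Sx}, i.e.\ \eqref{LastStep}; note this does \emph{not} use \eqref{Redux2}, so there is no circularity). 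Once \eqref{Redux2} is in hand, $\mathfrak{b}_S \le \mathfrak{c}_S \le \norm{S}_e \lesssim \mathfrak{b}_S$ forces $\mathfrak{b}_S\approx\mathfrak{c}_S\approx\norm{S}_e$, while $\mathfrak{a}_S\lesssim\norm{S}_e\lesssim\mathfrak{a}_S$ gives $\mathfrak{a}_S\approx\norm{S}_e$, and all four quantities are comparable.

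First I would dispose of the two elementary inequalities. For $\mathfrak{c}_S\le\norm{S}_e$, for any compact $Q$ the operators $M_{1_{(r\B_n)^c}}Q$ tend to $0$ in norm as $r\to1$ (the image of the unit ball is relatively compact, so the $L^p_\alpha$ mass of $Qf$ escaping past $r\B_n$ is uniformly small), hence $\mathfrak{c}_S=\lim_{r\to1}\norm{M_{1_{(r\B_n)^c}}S}\le\norm{S-Q}$, and taking the infimum over $Q$ gives the claim. For $\mathfrak{b}_S\le\mathfrak{c}_S$, for fixed $r$ and $\abs{z}$ large one has $D(z,r)\subset(R\B_n)^c$ with $R=R(z,r)\to1$ as $\abs{z}\to1$, so $1_{D(z,r)}\le 1_{(R\B_n)^c}$ pointwise and $\norm{M_{1_{D(z,r)}}S}\le\norm{M_{1_{(R\B_n)^c}}S}$; taking $\varlimsup_{\abs{z}\to1}$ and then $\sup_r$ yields $\mathfrak{b}_S\le\mathfrak{c}_S$.

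The heart of the argument is \eqref{Redux2}, and both of its inequalities will flow from the \emph{single} approximation scheme of Theorem~\ref{Approx3} applied to the fixed atomic Carleson measure $\mu=\mu_\varrho$. Since $T_\mu$ is invertible, for each $\epsilon>0$ Theorem~\ref{Approx3} furnishes sets $F_j\subset G_j$ (with $\{F_j\}$ a disjoint cover, $\{G_j\}$ of $N$-bounded overlap and bounded hyperbolic diameter $d=d(p,S,\epsilon)$) so that
\[
\Big\| S-\Big(\sum_j M_{1_{F_j}}S\,T_{\mu 1_{G_j}}\Big)T_\mu^{-1}\Big\|_{\mathcal{L}(A^p_\alpha,A^p_\alpha)}\lesssim\epsilon .
\]
Fixing $R<1$ and splitting the sum according to $J_R=\{j:G_j\cap R\B_n\neq\emptyset\}$, I would argue that $\sum_{j\in J_R} M_{1_{F_j}}S\,T_{\mu 1_{G_j}}$ is finite rank, hence compact: only finitely many atoms $w_m$ of $\mu$ lie in the (bounded) $d$-neighborhood of $R\B_n$, each atom meets at most $N$ of the $G_j$, and for atomic $\mu$ each $T_{\mu 1_{G_j}}$ is a finite linear combination of the kernels $K_{w_m}$, $w_m\in G_j$. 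Thus, modulo a compact operator, $\norm{S}_e$ is bounded by $\norm{T_\mu^{-1}}$ times the norm of the boundary tail $\sum_{j\notin J_R} M_{1_{F_j}}S\,T_{\mu 1_{G_j}}$. For the tail I would use disjointness of the $F_j$ to write, for $f\in A^p_\alpha$,
\[
\Big\|\sum_{j\notin J_R}M_{1_{F_j}}S\,T_{\mu 1_{G_j}}f\Big\|_{L^p_\alpha}^p=\sum_{j\notin J_R}\norm{M_{1_{F_j}}S\,T_{\mu 1_{G_j}}f}_{L^p_\alpha}^p ,
\]
and estimate each summand in one of two ways. On the \emph{domain} side: since $G_j\subset D(z_j,d)$ for a center $z_j\in G_j$ with $\abs{z_j}\ge R$, the range inclusion $T_{\mu 1_{G_j}}(A^p_\alpha)\subset T_{\mu 1_{D(z_j,d)}}(A^p_\alpha)$ holds (both ranges are spans of kernels at the atoms they contain), so $\norm{S\,T_{\mu 1_{G_j}}f}\le(\mathfrak{a}_S(d)+\epsilon)\norm{T_{\mu 1_{G_j}}f}$ once $R$ is near $1$. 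On the \emph{codomain} side: $F_j\subset D(z_j,d)$ gives $\norm{M_{1_{F_j}}Sg}\le\norm{M_{1_{D(z_j,d)}}S}\norm{g}$, and $\sup_{\abs{z}\ge R}\norm{M_{1_{D(z,d)}}S}\downarrow\varlimsup_{\abs{z}\to1}\norm{M_{1_{D(z,d)}}S}\le\mathfrak{b}_S$. In either case I would finish with the Carleson bound $\sum_j\norm{T_{\mu 1_{G_j}}f}_{A^p_\alpha}^p\lesssim\norm{f}_{A^p_\alpha}^p$, obtained from Lemma~\ref{CM-Cor}, the $N$-bounded overlap of $\{G_j\}$, and Lemma~\ref{CM}, exactly as in the proof of Theorem~\ref{Approx3}. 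Letting $R\to1$ and then $\epsilon\to0$ yields both inequalities of \eqref{Redux2}.

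The main obstacle is this boundary-tail estimate together with the bookkeeping that makes the central block genuinely compact. The essential structural input is that $\mu=\mu_\varrho$ is atomic with hyperbolically separated atoms: this simultaneously forces $T_{\mu 1_{G_j}}$ to be finite rank on bounded $G_j$ (so $\sum_{j\in J_R}$ is compact) and supplies the range inclusion needed to invoke the definition of $\mathfrak{a}_S(d)$. One must also verify that the diameter $d=d(p,S,\epsilon)$ produced by Theorem~\ref{Approx3} does no harm; this is handled by the monotonicity $\mathfrak{a}_S(d)\le\mathfrak{a}_S$ and by $\mathfrak{b}_S$ being a supremum over all radii, so that a fixed (even $\epsilon$-dependent) $d$ is always dominated by the global quantities.
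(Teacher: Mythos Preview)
Your proposal is correct and follows essentially the same approach as the paper: both use Theorem~\ref{Approx3} with the atomic measure $\mu=\mu_\varrho$ to split $ST_\mu$ into a compact finite-rank central block and a boundary tail, then estimate the tail via the domain-side bound (giving $\norm{S}_e\lesssim\mathfrak{a}_S$) and the codomain-side bound (giving $\norm{S}_e\lesssim\mathfrak{b}_S$), closing the cycle with $\mathfrak{a}_S\lesssim\norm{S}_e$ from Theorem~\ref{EssentialviaSx}. The one organizational difference is that you prove $\mathfrak{c}_S\le\norm{S}_e$ directly by the elementary observation that $M_{1_{(r\B_n)^c}}Q\to 0$ for $Q$ compact, whereas the paper instead introduces the auxiliary quantity $\varlimsup_m\norm{S_m}$ as a hub and obtains $\mathfrak{c}_S\lesssim\varlimsup_m\norm{S_m}$ through a second application of the approximation; your route is a mild simplification but not a genuinely different argument.
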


\begin{proof}
By Theorem \ref{Approx3} there are Borel sets $F_j\subset G_j\subset\B_n$ such that
\begin{itemize}
\item[(i)] $\B_n=\cup F_j$;
\item[(ii)] $F_j\cap F_k=\emptyset$ if $j\neq k$;
\item[(iii)] each point of $\B_n$ lies in no more than $N(n)$ of the sets $G_j$;
\item[(iv)] $\textnormal{diam}_{\beta}\, G_j\leq d(p,S,\epsilon)$
\end{itemize}
and
\begin{equation}
\label{RecallEst}
\norm{ST_\mu-\sum_{j=1}^{\infty} M_{1_{F_j}}ST_{\mu 1_{G_j}}}_{\mathcal{L}(A^p_{\alpha}, L^p_{\alpha})}<\epsilon.
\end{equation}
Set
$$
S_m=\sum_{j=m}^{\infty} M_{1_{F_j}} S T_{\mu 1_{G_j}}.
$$
Next, we consider one more measure of the size of $S$,
$$
\varlimsup_{m\to\infty} \norm{\sum_{j=m}^{\infty} M_{1_{F_j}} S T_{\mu 1_{G_j}}}_{\mathcal{L}(A^p_{\alpha}, L^p_{\alpha})}=\varlimsup_{m\to\infty}\norm{S_m}_{\mathcal{L}(A^p_{\alpha}, L^p_{\alpha})}.
$$
First some observations.  Since every $z\in\B_n$ belongs to only $N(n)$ sets $G_j$, Lemma~\ref{CM-Cor} gives
$$
\sum_{j=m}^{\infty}\norm{T_{\mu 1_{G_j}}f}_{A^p_{\alpha}}^p\lesssim \sum_{j=1}^{\infty} \norm{1_{G_j}f}_{L^p(\mu)}^p \lesssim\norm{f}_{A^p_{\alpha}}^p.
$$
Also, since $T_\mu$ is bounded and invertible, we have that $\norm{S}_e\approx\norm{ST_\mu}_e$.  Finally, we will need to compute both norms in $\mathcal{L}(A^p_{\alpha}, A^p_{\alpha})$ and $\mathcal{L}(A^p_{\alpha}, L^p_{\alpha})$.  When necessary, we will denote the respective essential norms as $\norm{\,\cdot\,}_{e}$ and $\norm{\,\cdot\,}_{ex}$.  It is easy to show that
$$
\norm{R}_{ex}\leq\norm{R}_e\leq\norm{P_\alpha}_{L^p_{\alpha}\to A^p_{\alpha}}\norm{R}_{ex}.
$$
The strategy behind the proof of the theorem is to demonstrate the following string of inequalities
\begin{eqnarray}
\mathfrak{b}_S & \leq & \mathfrak{c}_S\,\,\lesssim \,\,\varlimsup_{m\to\infty}\norm{S_m}_{\mathcal{L}(A^p_{\alpha}, L^p_{\alpha})}\,\,\lesssim\,\,\mathfrak{b}_S\label{Redux1}\\
\norm{S}_{e} & \lesssim & \varlimsup_{m\to\infty}\norm{S_m}_{\mathcal{L}(A^p_{\alpha}, L^p_{\alpha})}\,\,\lesssim\,\, \mathfrak{a}_S\,\,\lesssim\,\, \norm{S}_e\label{Redux2}.
\end{eqnarray}

The implied constants in all these estimates depend only on $p,\alpha$ and the dimension.
Combining \eqref{Redux1} and \eqref{Redux2} we have the theorem.
We prove now the first two inequalities in \eqref{Redux2}.

Fix  $f\in A^p_{\alpha}$ of norm 1 and note that
\begin{eqnarray}
\norm{S_m f}^p_{L^p_{\alpha}} & = & \sum_{j=m}^{\infty} \norm{M_{1_{F_j}} ST_{\mu 1_{G_j}}f}_{L^p_{\alpha}}^p\notag\\
& = & \sum_{j=m}^{\infty}\left(\frac{\norm{M_{1_{F_j}} ST_{\mu 1_{G_j}}f}_{L^p_{\alpha}}}{\norm{T_{\mu 1_{G_j}}f}_{A^p_{\alpha}}}\right)^{p}\norm{T_{\mu 1_{G_j}}f}_{A^p_{\alpha}}^p\notag\\
& \leq & \sup_{j\geq m}\sup\left\{\norm{M_{1_{F_j}} Sg}^p_{L^p_{\alpha}}: g\in T_{\mu 1_{G_j}}(A^p_{\alpha}),\norm{g}_{A^p_{\alpha}}= 1\right\}\sum_{j\geq m}\norm{T_{\mu 1_{G_j}}f}_{A^p_{\alpha}}^p\notag\\
& \lesssim & \sup_{j\geq m}\sup\left\{\norm{M_{1_{F_j}} Sg}^p_{L^p_{\alpha}}: g\in T_{\mu 1_{G_j}}(A^p_{\alpha}),\norm{g}_{A^p_{\alpha}}= 1\right\}\label{Last}.
\end{eqnarray}
Since $\textnormal{diam}_{\beta}\, G_j\leq d$, by selecting $z_j\in G_j$ we have $G_j\subset D(z_j,d)$, and so $T_{\mu 1_{G_j}}(A^p_\alpha)\subset T_{1_{\mu D(z_j,d)}}(A^p_\alpha)$.
Since $z_j$ approaches the boundary, we can select an additional sequence $0<\gamma_m<1$ tending to $1$ such that
$\abs{z_j}\geq\gamma_m$ when $j\geq m$.  Using \eqref{Last} we find that
\begin{eqnarray}
\norm{S_m}_{\mathcal{L}(A_\alpha^p,L_\alpha^p)} & \lesssim & \sup_{j\geq m}\sup\left\{\norm{M_{1_{F_j}} Sg}_{L^p_{\alpha}}: g\in T_{\mu 1_{G_j}}(A^p_{\alpha}),\norm{g}_{A^p_{\alpha}}= 1\right\}\notag\\
 & \lesssim & \sup_{\abs{z_j}\geq \gamma_m}\sup\left\{\norm{M_{1_{D(z_j,d)}} Sg}_{L^p_{\alpha}}: g\in T_{\mu 1_{D(z_j,d)}}(A^p_{\alpha}),\norm{g}_{A^p_{\alpha}}= 1\right\}\label{important}\\
 & \lesssim & \sup_{\abs{z_j}\geq \gamma_m}\sup\left\{\norm{Sg}_{L^p_{\alpha}}: g\in T_{\mu 1_{D(z_j,d)}}(A^p_{\alpha}),\norm{g}_{A^p_{\alpha}}= 1\right\}\notag.
\end{eqnarray}
Since $\gamma_m\to 1$ as $m\to \infty$, we get
$$
\varlimsup_{m\to\infty} \norm{S_m}_{\mathcal{L}(A_\alpha^p,L_\alpha^p)}\lesssim \mathfrak{a}_S(d).
$$
From \eqref{RecallEst} we see that
$$
\norm{ST_\mu}_{ex}\leq\varlimsup_{m\to\infty} \norm{S_m}_{\mathcal{L}(A_\alpha^p,L_\alpha^p)}+\epsilon\lesssim \mathfrak{a}_S(d)+\epsilon\lesssim\mathfrak{a}_S+\epsilon,
$$
giving $\norm{ST_\mu}_{ex}\leq\varlimsup_{m\to\infty} \norm{S_m}_{\mathcal{L}(A_\alpha^p,L_\alpha^p)}\lesssim\mathfrak{a}_S$,
since $\epsilon$ is arbitrary.  Therefore,
\begin{equation}
\norm{S}_e\approx\norm{ST_\mu}_{e}\lesssim \norm{ST_\mu}_{ex}\leq \varlimsup_{m} \norm{S_m}_{\mathcal{L}(A_\alpha^p,L_\alpha^p)}\lesssim \mathfrak{a}_S.
\end{equation}
This gives the first two inequalities in \eqref{Redux2}.
The remaining inequality is simply \eqref{LastStep}, which was proved in Theorem \ref{EssentialviaSx}.

We now consider \eqref{Redux1}.  If $0<r<1$, there exists a positive integer $m(r)$ such that $\bigcup_{j<m(r)}F_j\subset r\B_n$.
Then
\begin{eqnarray*}
\norm{M_{1_{(r\B_n)^c}}S}_{\mathcal{L}(A_\alpha^p,L_\alpha^p)}\norm{T^{-1}_\mu}^{-1}_{\mathcal{L}(A_\alpha^p,A_\alpha^p)} & \leq & \norm{M_{1_{(r\B_n)^c}}ST_\mu}_{\mathcal{L}(A_\alpha^p,L_\alpha^p)}\\
& \leq & \norm{M_{1_{(r\B_n)^c}}\left(ST_\mu-\sum_{j=1}^{\infty} M_{1_{F_j}}ST_{1_{G_j}\mu}\right)}_{\mathcal{L}(A_\alpha^p,L_\alpha^p)}\\
& & +\norm{M_{1_{(r\B_n)^c}}\sum_{j=1}^{\infty} M_{1_{F_j}}ST_{1_{G_j}\mu}}_{\mathcal{L}(A_\alpha^p,L_\alpha^p)}\\
& \leq & \epsilon +\norm{\sum_{j=m(r)}^{\infty} M_{1_{F_j}}ST_{1_{G_j}\mu}}_{\mathcal{L}(A_\alpha^p,L_\alpha^p)}=\epsilon+\norm{S_{m(r)}}_{\mathcal{L}(A_\alpha^p,L_\alpha^p)}.
\end{eqnarray*}
This string of inequalities easily yields
\begin{equation}
\label{Redux1-1}
\mathfrak{c}_{S}=\varlimsup_{r\to 1}\norm{M_{1_{(r\B_n)^c}}S}_{\mathcal{L}(A^p_{\alpha},L^p_{\alpha})}\lesssim \varlimsup_{m\to\infty} \norm{S_m}_{\mathcal{L}(A_\alpha^p,L_\alpha^p)}.
\end{equation}
Also, \eqref{important} gives that
\begin{equation}
\label{Redux1-2}
\varlimsup_{m\to\infty} \norm{S_m}_{\mathcal{L}(A_\alpha^p,L_\alpha^p)}\lesssim\varlimsup_{\abs{z}\to 1}
\norm{M_{1_{D(z,r)}}S} _{\mathcal{L}(A^p_{\alpha},L^p_{\alpha})}\lesssim\mathfrak{b}_S.
\end{equation}
Combining the trivial inequality $\mathfrak{b}_S\leq\mathfrak{c}_S$ with \eqref{Redux1-1} and \eqref{Redux1-2} we obtain \eqref{Redux1}.
\end{proof}

From these Theorems we can deduce two results of interest.
\begin{cor}
Let $\alpha>-1$ and $1<p<\infty$ and $S\in\mathcal{T}_{p,\alpha}$.  Then
$$
\norm{S}_e\approx \sup_{\norm{f}_{A^p_\alpha}=1}\varlimsup_{\abs{z}\to 1}\norm{S_z f}_{A^p_\alpha}.
$$
\end{cor}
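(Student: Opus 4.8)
The plan is to show that the right-hand side, which I will denote
$$
\mathfrak{d}_S:=\sup_{\norm{f}_{A^p_\alpha}=1}\varlimsup_{\abs{z}\to 1}\norm{S_z f}_{A^p_\alpha},
$$
is in fact \emph{equal} to $\sup_{x\in M_{\mathcal{A}}\setminus\B_n}\norm{S_x}_{\mathcal{L}(A^p_\alpha,A^p_\alpha)}$. Once this identity is in hand, the corollary follows immediately from Theorem \ref{EssentialviaSx}, which already supplies $\norm{S}_e\approx\sup_{x\in M_{\mathcal{A}}\setminus\B_n}\norm{S_x}_{\mathcal{L}(A^p_\alpha,A^p_\alpha)}$. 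The two workhorses throughout will be the strong operator continuity of $\Psi_S$ on $M_{\mathcal{A}}$ from Proposition \ref{SOTCon} (which is exactly where the hypothesis $S\in\mathcal{T}_{p,\alpha}$ enters) and the uniform bound $\norm{S_z}_{\mathcal{L}(A^p_\alpha,A^p_\alpha)}\lesssim\norm{S}_{\mathcal{L}(A^p_\alpha,A^p_\alpha)}$ coming from the isometry property of the $U_z^{(p,\alpha)}$.

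To prove $\mathfrak{d}_S\le\sup_x\norm{S_x}$, I would fix a unit vector $f\in A^p_\alpha$. Since $z\mapsto\norm{S_z f}_{A^p_\alpha}$ is bounded, I can select a sequence $\{z_j\}$ with $\abs{z_j}\to 1$ realizing $\varlimsup_{\abs{z}\to 1}\norm{S_z f}_{A^p_\alpha}$, and then use compactness of $M_{\mathcal{A}}$ to pass to a subnet $(z_\omega)$ converging to some $x\in M_{\mathcal{A}}\setminus\B_n$, the limit lying off $\B_n$ precisely because $\abs{z_j}\to 1$. The subnet $\norm{S_{z_\omega}f}_{A^p_\alpha}$ still tends to that same $\varlimsup$, while Proposition \ref{SOTCon} forces $S_{z_\omega}f\to S_x f$ in $A^p_\alpha$, hence $\norm{S_{z_\omega}f}_{A^p_\alpha}\to\norm{S_x f}_{A^p_\alpha}$. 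Equating the two limits identifies $\varlimsup_{\abs{z}\to 1}\norm{S_z f}_{A^p_\alpha}=\norm{S_x f}_{A^p_\alpha}\le\norm{S_x}_{\mathcal{L}(A^p_\alpha,A^p_\alpha)}\le\sup_{x\in M_{\mathcal{A}}\setminus\B_n}\norm{S_x}_{\mathcal{L}(A^p_\alpha,A^p_\alpha)}$, and taking the supremum over $f$ gives the inequality.

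For the reverse inequality $\sup_x\norm{S_x}\le\mathfrak{d}_S$, I would fix $x\in M_{\mathcal{A}}\setminus\B_n$ together with a unit vector $f$, and choose any net $(z_\omega)\subset\B_n$ with $z_\omega\to x$. Because $x\notin\B_n$ we have $\abs{z_\omega}\to 1$, so the net eventually lies in every annulus $\{r<\abs{z}<1\}$; consequently $\varlimsup_\omega\norm{S_{z_\omega}f}_{A^p_\alpha}\le\varlimsup_{\abs{z}\to 1}\norm{S_z f}_{A^p_\alpha}$. By Proposition \ref{SOTCon} the left-hand side equals $\norm{S_x f}_{A^p_\alpha}$, so $\norm{S_x f}_{A^p_\alpha}\le\mathfrak{d}_S$; taking the supremum over unit $f$ and then over $x$ yields $\sup_x\norm{S_x}\le\mathfrak{d}_S$. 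Together with the previous paragraph this gives the claimed identity, and chaining it with Theorem \ref{EssentialviaSx} finishes the proof.

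The only genuinely delicate point I anticipate is the bookkeeping between limits taken along nets in $M_{\mathcal{A}}$ and the $\varlimsup$ taken in the Euclidean sense as $\abs{z}\to 1$. This is dispatched by the single observation that any net converging to a boundary point of $M_{\mathcal{A}}$ must satisfy $\abs{z_\omega}\to 1$ and therefore eventually enters each annulus near the sphere, so that net-limsups are dominated by the Euclidean $\varlimsup$; dually, a sequence realizing the $\varlimsup$ admits a subnet that is $M_{\mathcal{A}}$-convergent. Beyond this, everything is a direct application of Proposition \ref{SOTCon} and Theorem \ref{EssentialviaSx}, so I expect no serious obstacle.
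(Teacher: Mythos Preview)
Your proposal is correct and follows essentially the same route as the paper: you establish the identity $\sup_{x\in M_{\mathcal{A}}\setminus\B_n}\norm{S_x}_{\mathcal{L}(A^p_\alpha,A^p_\alpha)}=\mathfrak{d}_S$ via Proposition~\ref{SOTCon} and the compactness of $M_{\mathcal{A}}$, and then invoke Theorem~\ref{EssentialviaSx}. The paper's own proof is terser---it simply asserts that for each fixed $f$ one has $\sup_{x\in M_{\mathcal{A}}\setminus\B_n}\norm{S_xf}=\varlimsup_{\abs{z}\to 1}\norm{S_z f}$ as ``easy to see'' from SOT continuity and compactness---whereas you have carefully spelled out both inequalities and the net/sequence bookkeeping, which is exactly the content behind that assertion.
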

\begin{proof}
It is easy to see from Lemma \ref{SOTCon} and the compactness of $M_{\mathcal{A}}$ that
$$
\sup_{x\in M_{\mathcal{A}}\setminus\B_n}\norm{S_xf}_{A^p_{\alpha}}=\varlimsup_{\abs{z}\to 1}\norm{ S_z f}_{A^p_{\alpha}}.
$$
But then,
$$
\sup_{x\in M_{\mathcal{A}}\setminus\B_n}\norm{S_x}_{\mathcal{L}(A^p_{\alpha},A^p_{\alpha})}=\sup_{\norm{f}_{A^p_\alpha}=1}\varlimsup_{\abs{z}\to 1}\norm{S_z f}_{A^p_\alpha}.
$$
The result then follows from Theorem \ref{EssentialviaSx}.
\end{proof}

The next result gives the characterization of compact operators in terms of the Berezin transform and membership
in the Toeplitz algebra.

\begin{thm}
Let\/ $1<p<\infty$, $\alpha>-1$ and\/ $S\in\mathcal{L}(A^p_{\alpha},A^p_{\alpha})$.  Then $S$ is compact if and only if\/
$S\in\mathcal{T}_{p,\alpha}$ and $B(S)=0$ on $\partial\B_n$.
\end{thm}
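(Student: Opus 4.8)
The plan is to deduce the theorem almost entirely from the machinery already assembled, by combining Proposition~\ref{BerezinVanish} (which converts boundary vanishing of the Berezin transform into the vanishing of all the fibre operators $S_x$) with the essential-norm estimate of Theorem~\ref{EssentialviaSx}. Both directions then become short.

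For the forward implication, suppose $S$ is compact. I would first invoke the known fact that every compact operator on $A^p_\alpha$ lies in $\mathcal{T}_{p,\alpha}$ (see~\cite{E92}), which immediately yields $S\in\mathcal{T}_{p,\alpha}$. To see that the Berezin transform vanishes on the boundary, I would reuse the argument already recorded in~\eqref{Vanishing}: since $k_\xi^{(p,\alpha)}\to 0$ weakly in $A^p_\alpha$ as $\abs{\xi}\to 1$ and $S$ is compact, $\norm{Sk_\xi^{(p,\alpha)}}_{A^p_\alpha}\to 0$, whence
$$
\abs{B(S)(\xi)}\le\norm{Sk_\xi^{(p,\alpha)}}_{A^p_\alpha}\norm{k_\xi^{(q,\alpha)}}_{A^q_\alpha}\approx\norm{Sk_\xi^{(p,\alpha)}}_{A^p_\alpha}\longrightarrow 0.
$$
Thus $B(S)=0$ on $\partial\B_n$, establishing both required conditions.

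For the converse, suppose $S\in\mathcal{T}_{p,\alpha}$ and $\lim_{\abs{z}\to 1}B(S)(z)=0$. Here I would apply Proposition~\ref{BerezinVanish}, which translates the boundary vanishing of the Berezin transform into the statement that $S_x=0$ for every $x\in M_{\mathcal{A}}\setminus\B_n$; consequently $\sup_{x\in M_{\mathcal{A}}\setminus\B_n}\norm{S_x}_{\mathcal{L}(A^p_\alpha,A^p_\alpha)}=0$. Since $S\in\mathcal{T}_{p,\alpha}$, Theorem~\ref{EssentialviaSx} applies and gives
$$
\norm{S}_e\lesssim \sup_{x\in M_{\mathcal{A}}\setminus\B_n}\norm{S_x}_{\mathcal{L}(A^p_\alpha,A^p_\alpha)}=0,
$$
so $\norm{S}_e=0$ and $S$ is compact.

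The only ingredient not proved within the paper is the containment of the compact operators in $\mathcal{T}_{p,\alpha}$, so the main (and essentially sole) point requiring care is the correct invocation of~\cite{E92}; everything else is a direct consequence of Proposition~\ref{BerezinVanish} and Theorem~\ref{EssentialviaSx}. I expect no serious obstacle, since the genuinely hard analytic work—approximating elements of the Toeplitz algebra by segmented operators and identifying $\norm{S}_e$ with $\sup_x\norm{S_x}$—has already been carried out in the preceding sections.
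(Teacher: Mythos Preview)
Your converse direction is exactly the paper's argument: Proposition~\ref{BerezinVanish} plus Theorem~\ref{EssentialviaSx}. The forward direction is also correct, but here you diverge slightly from the paper. You appeal to~\cite{E92} as a black box for the inclusion of the compacts in $\mathcal{T}_{p,\alpha}$, whereas the paper gives a self-contained proof using its own machinery: it reduces to rank-one operators $f\otimes g$ with $f,g$ polynomials, writes $f\otimes g = T_f(1\otimes 1)T_{\overline{g}}$, and then observes that $1\otimes 1 = T_{\delta_0}$, which lies in $\mathcal{T}_{p,\alpha}$ by Theorem~\ref{new2} (the approximation $T_{B_k(\mu)}\to T_\mu$). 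Your route is shorter and perfectly legitimate given that the paper itself cites~\cite{E92} for this fact in the introduction; the paper's route has the advantage of being internal and of showcasing that Theorem~\ref{new2} already contains enough to recover the compacts inside the Toeplitz algebra. So your remark that ``the only ingredient not proved within the paper'' is this containment is not quite accurate---the paper does prove it, just via a different mechanism than an external citation.
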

\begin{proof}
If $B(S)=0$ on $\partial\B_n$, Proposition \ref{BerezinVanish} says that $S_x=0$ for all $x\in M_{\mathcal{A}}\setminus\B_n$.
So, if $S\in \mathcal{T}_{p,\alpha}$, Theorem \ref{EssentialviaSx} gives that $S$ must be compact.

In the other direction, if $S$ is compact then $B(S)=0$ on $\partial\B_n$ by \eqref{Vanishing}.
So it only remains to show that $S\in\mathcal{T}_{p,\alpha}$.
Since every compact operator on $A^p_{\alpha}$ can be approximated by finite rank operators,
it suffices to show that all rank one operators are in $\mathcal{T}_{p,\alpha}$.
But, the rank one operators have the form $f\otimes g$, given by
$$
(f\otimes g) (h)=\ip{h}{g}_{A^2_\alpha}f,
$$
where $f\in A^p_{\alpha}$, $g\in A^q_{\alpha}$, and $h\in A^p_{\alpha}$.
We can further suppose that $f$ and $g$ are polynomials, since the polynomials are dense in $A^p_\alpha$ and $A^q_\alpha$,
respectively.  But then
$$
f\otimes g= T_f(1\otimes 1)T_{\overline{g}},
$$
and it suffices to show that $1\otimes 1\in \mathcal{T}_{p,\alpha}$.  This is an immediate consequence of Theorem \ref{new2},
since
$1\otimes 1=T_{\delta_0}$, where $\delta_0$ is the Dirac measure concentrated at zero.
\end{proof}

\subsection{The Hilbert Space Case}

When $p=2$, some of the previous results can be strengthened in straightforward ways.
It is easy to see that if $T\in\mathcal{L}(A^2_\alpha,A^2_\alpha)$, $S\in\mathcal{T}_{2,\alpha}$, and $x\in M_{\mathcal{A}}$,
then
$$
(ST)_x=S_xT_x, \quad (TS)_x=T_xS_x, \quad (T^*)_x=T_x^*.
$$
This follows from Propositions \ref{WOTCon} and \ref{SOTCon},
just taking into account that in \eqref{bofz}, $b_z=1$ for $p=2$.
Observe also that if $S\in\mathcal{L}(A^2_\alpha,A^2_{\alpha})$, $z\in \B_n$, and $x\in M_{\mathcal{A}}$, then
$$
\norm{S_x}_{\mathcal{L}(A^2_\alpha,A^2_\alpha)}\leq\norm{S_z}_{\mathcal{L}(A^2_\alpha,A^2_\alpha)}=\norm{S}_{\mathcal{L}(A^2_\alpha,A^2_\alpha)}.
$$

Let $\mathcal{K}$ denote the ideal of compact operators on $A^2_\alpha$.  Recall that the Calkin algebra
is given by $\mathcal{L}(A^2_\alpha, A^2_\alpha)/\mathcal{K}$.
The spectrum of $S$ will be denoted by $\sigma(S)$, and the spectral radius by
$$
r(S)=\sup\{\abs{\lambda}:\lambda\in\sigma(S)\}.
$$
We also define the essential spectrum, $\sigma_e(S)$, as the spectrum of $S+\mathcal{K}$ in the Calkin algebra, and the essential spectral radius as
$$
r_e(S)=\sup\{\abs{\lambda}:\lambda\in\sigma_e(S)\}.
$$
The following result is the improvement that is available in the Hilbert space case.

\begin{thm}
For $S\in\mathcal{T}_{2,\alpha}$ we have
\begin{equation}
\label{EssentialNorm}
\norm{S}_e=\sup_{x\in M_{\mathcal{A}}\setminus\B_n}\norm{S_x}_{\mathcal{L}(A^2_\alpha, A^2_\alpha)}
\end{equation}
and
\begin{equation}
\label{EssentialRad}
\sup_{x\in M_{\mathcal{A}}\setminus\B_n} r(S_x)\leq\lim_{k\to\infty}
\left(\sup_{x\in M_{\mathcal{A}}\setminus\B_n}\norm{S_x^k}^{\frac{1}{k}}_{\mathcal{L}(A^2_\alpha, A^2_\alpha)}\right)=r_e(S),
\end{equation}
with equality when $S$ is essentially normal.
\end{thm}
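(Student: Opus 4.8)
The plan is to reduce both assertions to $C^*$-algebra theory, exploiting the fact that for $p=2$ the intertwining symbol in \eqref{bofz} satisfies $b_z=1$, so each $U^{(2,\alpha)}_z$ is a self-adjoint unitary and $\norm{S_z}=\norm{S}$. First I would record the structural facts: since $(T_a)^\ast=T_{\overline a}$ on $A^2_\alpha$, the Toeplitz algebra $\mathcal{T}_{2,\alpha}$ is a norm-closed $\ast$-subalgebra of $\mathcal{L}(A^2_\alpha,A^2_\alpha)$, hence a $C^*$-algebra, and it contains the closed $\ast$-ideal $\mathcal{K}$; consequently the Calkin image $\mathcal{T}_{2,\alpha}/\mathcal{K}$ is a $C^*$-algebra whose norm is exactly $\norm{\,\cdot\,}_e$.

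To prove \eqref{EssentialNorm} I would introduce the $C^*$-algebra $\mathcal{B}:=\ell^\infty\big(M_{\mathcal{A}}\setminus\B_n;\mathcal{L}(A^2_\alpha,A^2_\alpha)\big)$ of bounded families $(T_x)_x$ with pointwise operations and supremum norm, together with the map $\Phi:\mathcal{T}_{2,\alpha}\to\mathcal{B}$ given by $\Phi(S)=(S_x)_{x\in M_{\mathcal{A}}\setminus\B_n}$. This map is well defined and contractive because $\norm{S_x}\le\norm{S}$, and by the identities $(ST)_x=S_xT_x$ and $(S^\ast)_x=S_x^\ast$ recorded at the start of this subsection it is a $\ast$-homomorphism. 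The role of Theorem \ref{EssentialviaSx} is only to identify its kernel: the equivalence $\sup_x\norm{S_x}\approx\norm{S}_e$ proved there shows that $\Phi(S)=0$ if and only if $S\in\mathcal{K}$, so $\Phi$ descends to an injective $\ast$-homomorphism $\overline\Phi:\mathcal{T}_{2,\alpha}/\mathcal{K}\to\mathcal{B}$. Since an injective $\ast$-homomorphism of $C^*$-algebras is automatically isometric, this yields $\norm{S}_e=\norm{S+\mathcal{K}}=\norm{\overline\Phi(S+\mathcal{K})}_{\mathcal{B}}=\sup_x\norm{S_x}$, which is \eqref{EssentialNorm}.

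For \eqref{EssentialRad} I would apply \eqref{EssentialNorm} to the powers $S^k\in\mathcal{T}_{2,\alpha}$, using $(S^k)_x=S_x^k$, to get $\norm{S^k}_e=\sup_x\norm{S_x^k}$. The spectral radius formula in the Calkin $C^*$-algebra then gives
$$
r_e(S)=\lim_{k\to\infty}\norm{S^k}_e^{\frac1k}=\lim_{k\to\infty}\Big(\sup_x\norm{S_x^k}\Big)^{\frac1k}=\lim_{k\to\infty}\Big(\sup_x\norm{S_x^k}^{\frac1k}\Big),
$$
the stated middle equality (using that $t\mapsto t^{1/k}$ is increasing). For the first inequality I would fix $x$ and use $r(S_x)=\inf_k\norm{S_x^k}^{\frac1k}\le\sup_y\norm{S_y^k}^{\frac1k}$ for every $k$, let $k\to\infty$, and take the supremum over $x$. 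Finally, if $S$ is essentially normal then $S^\ast S-SS^\ast\in\mathcal{K}$, so applying $\Phi$ gives $S_x^\ast S_x-S_xS_x^\ast=0$ for every $x$; hence each $S_x$ is normal with $r(S_x)=\norm{S_x}$, and likewise $S+\mathcal{K}$ is normal in the Calkin algebra with $r_e(S)=\norm{S}_e$. Combining with \eqref{EssentialNorm}, $\sup_x r(S_x)=\sup_x\norm{S_x}=\norm{S}_e=r_e(S)$, forcing equality throughout \eqref{EssentialRad}.

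I expect the only genuinely nontrivial point to be invoking the $C^*$-algebraic fact that an injective $\ast$-homomorphism is isometric, which is exactly what upgrades the norm-equivalence of Theorem \ref{EssentialviaSx} (with constants depending on $p,\alpha,n$) to the sharp equality \eqref{EssentialNorm}; the remaining steps are bookkeeping with the multiplicativity and involution identities special to $p=2$ and with the Gelfand spectral radius formula.
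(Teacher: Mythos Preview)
Your proof is correct, but the route to \eqref{EssentialNorm} differs from the paper's. You build the $\ast$-homomorphism $\Phi:\mathcal{T}_{2,\alpha}\to\ell^\infty(M_{\mathcal{A}}\setminus\B_n;\mathcal{L}(A^2_\alpha))$, identify its kernel as $\mathcal{K}$ via Theorem~\ref{EssentialviaSx}, and then invoke the $C^\ast$-fact that an injective $\ast$-homomorphism is isometric to upgrade the norm equivalence to equality; \eqref{EssentialRad} then follows from \eqref{EssentialNorm} applied to $S^k$. The paper reverses the order: it applies Theorem~\ref{EssentialviaSx} to $S^k$, takes $k$-th roots and lets $k\to\infty$ so that the implicit constants disappear, obtaining the equality in \eqref{EssentialRad} first; then it derives \eqref{EssentialNorm} by applying the equality case of \eqref{EssentialRad} to the self-adjoint (hence essentially normal) operator $S^\ast S$ and using $\norm{S}_e^2=\norm{S^\ast S}_e=r_e(S^\ast S)$. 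Your approach is cleaner structurally and makes transparent that the sharp equality comes from $C^\ast$-rigidity, while the paper's argument is more self-contained in that it never explicitly quotes the ``injective $\ast$-homomorphisms are isometric'' theorem (though of course the $S^\ast S$ trick is essentially its proof). Both arguments rest on exactly the same ingredients: the multiplicative and involutive identities for $S\mapsto S_x$ special to $p=2$, and Theorem~\ref{EssentialviaSx}.
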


\begin{proof}
Since $(S^k)_x=(S_x)^k$, then by Theorem \ref{EssentialviaSx} gives
$$
\sup_{x\in M_{\mathcal{A}}\setminus\B_n}\norm{(S_x)^k}^{\frac{1}{k}}_{\mathcal{L}(A^2_\alpha, A^2_\alpha)}\lesssim \norm{S^k}_e^{\frac{1}{k}}\lesssim \sup_{x\in M_{\mathcal{A}}\setminus\B_n}\norm{(S_x)^k}^{\frac{1}{k}}_{\mathcal{L}(A^2_\alpha, A^2_\alpha)}.
$$
Taking the limit as $k\to\infty$ yields
$$
\lim_{k\to\infty}\left(\sup_{x\in M_\mathcal{A}\setminus\B_n}\norm{S_x^k}^{\frac{1}{k}}_{\mathcal{L}(A^2_\alpha, A^2_\alpha)}\right)=r_e(S).
$$
For the inequality one notes that $r(T)\leq \norm{T^k}^{\frac{1}{k}}$ for a generic operator, and consequently
$$
\sup_{x\in M_{\mathcal{A}}\setminus\B_n} r(S_x)\leq \sup_{x\in M_{\mathcal{A}}\setminus\B_n}\norm{(S_x)^k}^{\frac{1}{k}}_{\mathcal{L}(A^2_\alpha, A^2_\alpha)}.
$$
Combining these observations we obtain \eqref{EssentialRad}.  Suppose now that $S$ is essentially normal.
This means that $S^*S-SS^*$ is compact, and therefore
$$
S^*_xS_x-S_xS^*_x=(S^*S-SS^*)_x=0.
$$
Thus, $S_x$ is a normal operator for each $x\in M_{\mathcal{A}}\setminus\B_n$, and
$$
\norm{S_x^k}^{\frac{1}{k}}_{\mathcal{L}(A^2_\alpha, A^2_\alpha)}=r(S_x).
$$
This gives the equality in \eqref{EssentialRad}, since
$$
\sup_{x\in M_\mathcal{A}\setminus\B_n}r(S_x)=\lim_{k\to\infty}\sup_{x\in M_\mathcal{A}\setminus\B_n}\norm{S_x^k}^{\frac{1}{k}}_{\mathcal{L}(A^2_\alpha, A^2_\alpha)}=r_e(S).
$$
Now apply the equality in \eqref{EssentialRad} to the operator $S^*S$ and note that
\begin{eqnarray*}
\norm{S}^2_e=\norm{S^*S}_e=r_e(S^*S) & = & \sup_{x\in M_\mathcal{A}\setminus\B_n}r((S^*S)_x)\\
 & = & \sup_{x\in M_\mathcal{A}\setminus\B_n}\norm{S_x^*S_x} _{\mathcal{L}(A^2_\alpha, A^2_\alpha)}\\
 & = & \sup_{x\in M_\mathcal{A}\setminus\B_n}\norm{S_x}^2 _{\mathcal{L}(A^2_\alpha, A^2_\alpha)}.
\end{eqnarray*}
\end{proof}
The following Corollary can be proved in a similar manner as in \cite{Sua}.
\begin{cor}
Let $S\in \mathcal{T}_{2,\alpha}$ and $\eta, \delta\in\R$ be such that $\eta I\leq S_x\leq \delta I$
for all $x\in M_{\mathcal{A}}\setminus\B_n$.  Then given $\epsilon>0$, there is a compact self-adjoint operator $K$ such that
$$
(\eta-\epsilon)I\leq S+K\leq(\delta+\epsilon)I.
$$
\end{cor}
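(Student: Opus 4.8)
The plan is to center $S$ at the midpoint $c=\frac{\eta+\delta}{2}$, convert the two-sided fiberwise bound into a single bound on $\sup_{x}\norm{(S-cI)_x}$, and then feed this into the Hilbert-space essential-norm identity \eqref{EssentialNorm} to manufacture the compact perturbation. First I would observe that the hypothesis $\eta I\le S_x\le\delta I$ forces every fiber $S_x$ to be self-adjoint; combined with $(S^*)_x=S_x^*$ this gives $(S-S^*)_x=0$ for all $x\in M_{\mathcal{A}}\setminus\B_n$, whence $S-S^*$ is compact by Theorem \ref{EssentialviaSx}. Since the operator inequalities in both the hypothesis and the conclusion are meaningful only for self-adjoint operators, and $S+K$ self-adjoint with $K=K^*$ forces $S=S^*$, I would work throughout under the standing assumption $S=S^*$.

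Next I would set $T:=S-cI$. Because $I=T_1\in\mathcal{T}_{2,\alpha}$ and, for $p=2$, $(I)_x=T_{b_x}^{-1}=I$, we have $T\in\mathcal{T}_{2,\alpha}$ with $T_x=S_x-cI$. The bound $\eta I\le S_x\le\delta I$ is then equivalent to $-\frac{\delta-\eta}{2}I\le T_x\le\frac{\delta-\eta}{2}I$, and as each $T_x$ is self-adjoint this is exactly $\norm{T_x}_{\mathcal{L}(A^2_\alpha,A^2_\alpha)}\le\frac{\delta-\eta}{2}$ for every $x\in M_{\mathcal{A}}\setminus\B_n$. Taking the supremum and invoking \eqref{EssentialNorm} gives $\norm{T}_e=\sup_{x\in M_{\mathcal{A}}\setminus\B_n}\norm{T_x}_{\mathcal{L}(A^2_\alpha,A^2_\alpha)}\le\frac{\delta-\eta}{2}$.

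To finish, fix $\epsilon>0$ and use the definition of the essential norm to pick a compact $Q_0$ with $\norm{T-Q_0}_{\mathcal{L}(A^2_\alpha,A^2_\alpha)}<\frac{\delta-\eta}{2}+\epsilon$. I would then symmetrize, replacing $Q_0$ by the compact self-adjoint operator $Q:=\frac12(Q_0+Q_0^*)$; since $T=T^*$ this only improves the estimate, keeping $\norm{T-Q}<\frac{\delta-\eta}{2}+\epsilon$ while rendering $T-Q$ self-adjoint. A self-adjoint operator of norm at most $t$ satisfies $-tI\le\,\cdot\,\le tI$, so
$$
-\Big(\tfrac{\delta-\eta}{2}+\epsilon\Big)I\le T-Q\le\Big(\tfrac{\delta-\eta}{2}+\epsilon\Big)I .
$$
Adding $cI$ and using $c-\frac{\delta-\eta}{2}=\eta$ and $c+\frac{\delta-\eta}{2}=\delta$ turns this into $(\eta-\epsilon)I\le S-Q\le(\delta+\epsilon)I$, so $K:=-Q$ is the desired compact self-adjoint operator.

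The only genuine input is the exact essential-norm formula \eqref{EssentialNorm}, available precisely because $p=2$; it is what upgrades the fiberwise spectral bound into a true essential-norm estimate. Given that, the remaining work---symmetrizing the approximating compact operator and reinterpreting a norm bound as a two-sided operator inequality---is routine. I expect the main point requiring care to be the self-adjointness bookkeeping, namely arranging that both $K$ and $S+K$ are self-adjoint, which is the reason for reducing to $S=S^*$ at the very start.
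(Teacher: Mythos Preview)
Your argument is correct and is exactly the standard one the paper has in mind: the paper does not give its own proof here but simply refers to \cite{Sua}, where the same centering-and-essential-norm argument appears. Reducing to $S=S^\ast$, setting $T=S-\tfrac{\eta+\delta}{2}I$, reading off $\|T\|_e\le\tfrac{\delta-\eta}{2}$ from \eqref{EssentialNorm}, choosing a compact approximant and symmetrizing it is precisely that route; your observation that the conclusion itself forces $S=S^\ast$ (so the reduction loses nothing) is the right way to handle the self-adjointness bookkeeping.
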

Using the tools from above, and repeating the proof in \cite{Sua} we have the following.
\begin{thm}
Let $S\in\mathcal{T}_{2,\alpha}$.  The following are equivalent:
\begin{itemize}
\item[(1)] $\lambda\notin \sigma_e(S)$;
\item[(2)] $$\lambda\notin \bigcup_{x\in M_\mathcal{A}\setminus\B_n}\sigma(S_x)\quad\textnormal{ and }\quad\sup_{x\in M_\mathcal{A}\setminus\B_n}\norm{\left(S_x-\lambda I\right)^{-1}}_{\mathcal{L}(A^2_\alpha,A^2_\alpha)}<\infty;$$
\item[(3)] there is a number $t>0$ depending only on $\lambda$ such that
$$
\norm{(S_x-\lambda I)f}_{A^2_\alpha}\geq t\norm{f}_{A^2_\alpha}\quad\textnormal{ and }\quad\norm{(S_x^*-\overline{\lambda} I)f}_{A^2_\alpha}\geq t\norm{f}_{A^2_\alpha}
$$
for all $f\in A^2_\alpha$ and $x\in M_{\mathcal{A}}\setminus\B_n$.
\end{itemize}
\end{thm}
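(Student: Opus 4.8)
The plan is to establish the cycle $(1)\Rightarrow(2)\Rightarrow(3)\Rightarrow(1)$, following the scheme of \cite{Sua}. Throughout, set $T:=S-\lambda I$; since $I=T_1\in\mathcal{T}_{2,\alpha}$ we have $T\in\mathcal{T}_{2,\alpha}$, and because $(\cdot)_x$ is linear with $I_x=I$ (as $b_z\equiv1$ when $p=2$, cf. \eqref{bofz}), we have $T_x=S_x-\lambda I$ for every $x\in M_{\mathcal{A}}$. The argument rests on two structural facts peculiar to $p=2$. First, $\mathcal{T}_{2,\alpha}$ is a unital $C^*$-subalgebra of $\mathcal{L}(A^2_\alpha,A^2_\alpha)$ containing $\mathcal{K}$: it is norm closed by definition and closed under adjoints because $T_a^*=T_{\overline{a}}$. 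Second, for $x\in M_{\mathcal{A}}\setminus\B_n$ the assignment $R\mapsto R_x$ is linear, multiplicative and adjoint-preserving on $\mathcal{T}_{2,\alpha}$, satisfies $\norm{R_x}\leq\norm{R}$, and annihilates $\mathcal{K}$ by Proposition \ref{BerezinVanish}; hence it descends to a contractive unital $*$-homomorphism $\Phi_x:\mathcal{T}_{2,\alpha}/\mathcal{K}\to\mathcal{L}(A^2_\alpha,A^2_\alpha)$ with $\Phi_x([R])=R_x$.

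For $(1)\Rightarrow(2)$, the hypothesis $\lambda\notin\sigma_e(S)$ says that $[T]$ is invertible in the Calkin algebra; since $\mathcal{T}_{2,\alpha}/\mathcal{K}$ is a unital $C^*$-subalgebra, spectral permanence places $[T]^{-1}$ inside $\mathcal{T}_{2,\alpha}/\mathcal{K}$. Applying $\Phi_x$ shows that each $S_x-\lambda I=\Phi_x([T])$ is invertible with inverse $\Phi_x([T]^{-1})$, and contractivity yields the uniform bound $\sup_{x}\norm{(S_x-\lambda I)^{-1}}\leq\norm{[T]^{-1}}<\infty$, which is $(2)$. The step $(2)\Rightarrow(3)$ is the elementary Hilbert space fact that an invertible operator $A$ obeys $\norm{Af}\geq\norm{A^{-1}}^{-1}\norm{f}$: apply it to $A=S_x-\lambda I$ and to its adjoint $S_x^*-\overline{\lambda}I$, whose inverse has the same norm, and take $t=(\sup_{x}\norm{(S_x-\lambda I)^{-1}})^{-1}$.

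The substantive step is $(3)\Rightarrow(1)$. Using the $p=2$ product and adjoint rules, the two lower bounds in $(3)$ become $(T^*T)_x=T_x^*T_x\geq t^2 I$ and $(TT^*)_x=T_xT_x^*\geq t^2 I$, uniformly in $x\in M_{\mathcal{A}}\setminus\B_n$, while $\norm{T_x}\leq\norm{T}$ supplies the upper bounds $(T^*T)_x,(TT^*)_x\leq\norm{T}^2 I$. As $T^*T$ and $TT^*$ are self-adjoint members of $\mathcal{T}_{2,\alpha}$, the Corollary above (with $\eta=t^2$, $\delta=\norm{T}^2$, $\epsilon=t^2/2$) furnishes compact self-adjoint operators $K,K'$ with $A:=T^*T+K\geq\tfrac{t^2}{2}I$ and $A':=TT^*+K'\geq\tfrac{t^2}{2}I$; both are positive and hence invertible. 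Then $(A^{-1}T^*)T=I-A^{-1}K$ and $T\bigl(T^*(A')^{-1}\bigr)=I-K'(A')^{-1}$, so $[T]$ has both a left and a right inverse in the Calkin algebra and is therefore invertible, giving $\lambda\notin\sigma_e(S)$. I expect the main obstacle to be exactly this last implication: converting the fiberwise invertibility of $(3)$ into a single global parametrix. This is where the essential-norm identity \eqref{EssentialNorm} does the real work, through the Corollary, by turning the uniform lower bound on the fibers $T_x$ into a genuine invertible compact perturbation of $T^*T$ and of $TT^*$; once that is in hand the Fredholm conclusion is purely algebraic.
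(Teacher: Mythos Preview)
Your proof is correct and follows essentially the approach the paper intends: the paper itself gives no argument here, stating only that the result follows by ``repeating the proof in \cite{Sua}'' using the tools developed above, and your cycle $(1)\Rightarrow(2)\Rightarrow(3)\Rightarrow(1)$ does exactly that, invoking the $*$-homomorphism structure of $R\mapsto R_x$ from the Hilbert space subsection, Proposition~\ref{BerezinVanish} together with \eqref{Vanishing} to kill compacts, and the preceding Corollary on compact self-adjoint perturbations for the key step $(3)\Rightarrow(1)$.
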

The above theorem then yields the following Corollary.
\begin{cor}
If $S\in\mathcal{T}_{2,\alpha}$ then
$$
\overline{\bigcup_{x\in M_\mathcal{A}\setminus\B_n}\sigma(S_x)}\subset\sigma_e(S),
$$
with equality if $S$ is essentially normal.
\end{cor}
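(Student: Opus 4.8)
The plan is to read off both inclusions directly from the preceding Theorem, which characterizes $\lambda\notin\sigma_e(S)$ through the equivalent conditions (1)--(3), and to exploit the fact that essential normality forces each $S_x$ to be a normal operator on $A^2_\alpha$.

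First I would establish the inclusion $\overline{\bigcup_{x}\sigma(S_x)}\subset\sigma_e(S)$, which needs no extra hypothesis on $S$. The implication (1)$\Rightarrow$(2) of the preceding Theorem says that whenever $\lambda\notin\sigma_e(S)$ one has, in particular, $\lambda\notin\bigcup_{x\in M_{\mathcal{A}}\setminus\B_n}\sigma(S_x)$. Taking contrapositives immediately gives $\bigcup_{x\in M_{\mathcal{A}}\setminus\B_n}\sigma(S_x)\subset\sigma_e(S)$. Since $\sigma_e(S)$ is by definition the spectrum of $S+\mathcal{K}$ in the Calkin algebra, it is closed, so passing to closures yields $\overline{\bigcup_{x}\sigma(S_x)}\subset\sigma_e(S)$.

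For the reverse inclusion under the assumption that $S$ is essentially normal, I would verify condition (2) for an arbitrary $\lambda\notin\overline{\bigcup_{x}\sigma(S_x)}$ and then invoke (2)$\Rightarrow$(1). Such a $\lambda$ trivially satisfies the first half of (2), and moreover there is a fixed $d>0$ with $\operatorname{dist}(\lambda,\sigma(S_x))\geq d$ uniformly in $x$. The decisive point is that essential normality makes each $S_x$ normal: as recorded in the Hilbert-space computations preceding this Corollary, $S_x^*S_x-S_xS_x^*=(S^*S-SS^*)_x=0$ for every $x\in M_{\mathcal{A}}\setminus\B_n$. For a normal operator the spectral theorem gives the resolvent-norm identity $\norm{(S_x-\lambda I)^{-1}}_{\mathcal{L}(A^2_\alpha,A^2_\alpha)}=\operatorname{dist}(\lambda,\sigma(S_x))^{-1}\leq d^{-1}$, whence $\sup_{x}\norm{(S_x-\lambda I)^{-1}}_{\mathcal{L}(A^2_\alpha,A^2_\alpha)}\leq d^{-1}<\infty$. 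Thus condition (2) holds, and the Theorem forces $\lambda\notin\sigma_e(S)$; contraposing gives $\sigma_e(S)\subset\overline{\bigcup_{x}\sigma(S_x)}$, and combined with the first part this yields equality.

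The argument is essentially a bookkeeping exercise on top of the preceding Theorem, so I do not anticipate a serious obstacle. The one place that genuinely uses the normality hypothesis is the resolvent-norm identity $\norm{(S_x-\lambda I)^{-1}}=\operatorname{dist}(\lambda,\sigma(S_x))^{-1}$, which is standard for normal operators; the only real care required is to convert the uniform distance bound into a uniform bound on the family $\{(S_x-\lambda I)^{-1}\}_{x}$ simultaneously, which is exactly the finiteness of the supremum demanded by condition (2).
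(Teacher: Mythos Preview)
Your argument is correct and is exactly the derivation the paper has in mind: the paper does not write out a proof but simply records that ``the above theorem then yields the following Corollary,'' and your two paragraphs are precisely the routine unpacking of (1)$\Leftrightarrow$(2) together with the normality of each $S_x$ already established in the proof of \eqref{EssentialNorm}. There is nothing to add or correct.
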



\begin{bibdiv}
\begin{biblist}

\bib{AZ}{article}{
   author={Axler, Sheldon},
   author={Zheng, Dechao},
   title={Compact operators via the Berezin transform},
   journal={Indiana Univ. Math. J.},
   volume={47},
   date={1998},
   number={2},
   pages={387--400}
}

\bib{AZ2}{article}{
   author={Axler, Sheldon},
   author={Zheng, Dechao},
   title={The Berezin transform on the Toeplitz algebra},
   journal={Studia Math.},
   volume={127},
   date={1998},
   number={2},
   pages={113--136}
}

\bib{BI}{article}{
   author={Bauer, Wolfram},
   author={Isralowitz, Joshua},
   title={Compactness characterization of operators in the Toeplitz algebra of the Fock space $F_\alpha^p$},
   journal={J. Funct. Anal.},
   volume={263},
   date={2012},
   number={5},
   pages={1323--1355},
   eprint={http://arxiv.org/abs/1109.0305v2}
}

\bib{B}{article}{
   author={Berndtsson, Bo},
   title={Interpolating sequences for $H^\infty$ in the ball},
   journal={Nederl. Akad. Wetensch. Indag. Math.},
   volume={47},
   date={1985},
   number={1},
   pages={1--10}
}

\bib{CR}{article}{
   author={Coifman, R. R.},
   author={Rochberg, R.},
   title={Representation theorems for holomorphic and harmonic functions in
   $L^{p}$},
   conference={
      title={Representation theorems for Hardy spaces},
   },
   book={
      series={Ast\'erisque},
      volume={77},
      publisher={Soc. Math. France},
      place={Paris},
   },
   date={1980},
   pages={11--66}
}

\bib{E92}{article}{
   author={Engli{\v{s}}, Miroslav},
   title={Density of algebras generated by Toeplitz operator on Bergman spaces},
   journal={Ark. Mat.},
   volume={30},
   date={1992},
   pages={227--243}
}

\bib{E}{article}{
   author={Engli{\v{s}}, Miroslav},
   title={Compact Toeplitz operators via the Berezin transform on bounded
   symmetric domains},
   journal={Integral Equations Operator Theory},
   volume={33},
   date={1999},
   number={4},
   pages={426--455}
}

\bib{I}{article}{
   author={Issa, Hassan},
   title={Compact Toeplitz operators for weighted Bergman spaces on bounded
   symmetric domains},
   journal={Integral Equations Operator Theory},
   volume={70},
   date={2011},
   number={4},
   pages={569--582}
}

\bib{LH}{article}{
   author={Li, Song Xiao},
   author={Hu, Jun Yun},
   title={Compact operators on Bergman spaces of the unit ball},
   language={Chinese, with English and Chinese summaries},
   journal={Acta Math. Sinica (Chin. Ser.)},
   volume={47},
   date={2004},
   number={5},
   pages={837--844}
}

\bib{L}{article}{
   author={Luecking, Daniel H.},
   title={Representation and duality in weighted spaces of analytic
   functions},
   journal={Indiana Univ. Math. J.},
   volume={34},
   date={1985},
   number={2},
   pages={319--336}
}

\bib{NZZ}{article}{
   author={Nam, Kyesook},
   author={Zheng, Dechao},
   author={Zhong, Changyong},
   title={$m$-Berezin transform and compact operators},
   journal={Rev. Mat. Iberoam.},
   volume={22},
   date={2006},
   number={3},
   pages={867--892}
}

\bib{R}{article}{
   author={Raimondo, Roberto},
   title={Toeplitz operators on the Bergman space of the unit ball},
   journal={Bull. Austral. Math. Soc.},
   volume={62},
   date={2000},
   number={2},
   pages={273--285}
}

\bib{St}{article}{
   author={Stroethoff, Karel},
   title={Compact Toeplitz operators on Bergman spaces},
   journal={Math. Proc. Cambridge Philos. Soc.},
   volume={124},
   date={1998},
   number={1},
   pages={151--160}
}

\bib{SZ}{article}{
   author={Stroethoff, Karel},
   author={Zheng, Dechao},
   title={Toeplitz and Hankel operators on Bergman spaces},
   journal={Trans. Amer. Math. Soc.},
   volume={329},
   date={1992},
   number={2},
   pages={773--794}
}

\bib{Sua}{article}{
   author={Su{\'a}rez, Daniel},
   title={The essential norm of operators in the Toeplitz algebra on $A^p(\mathbb{B}_n)$},
   journal={Indiana Univ. Math. J.},
   volume={56},
   date={2007},
   number={5},
   pages={2185--2232}
}

\bib{Sua2}{article}{
   author={Su{\'a}rez, Daniel},
   title={Approximation and the $n$-Berezin transform of operators on the
   Bergman space},
   journal={J. Reine Angew. Math.},
   volume={581},
   date={2005},
   pages={175--192}
}

\bib{YS}{article}{
   author={Yu, Tao},
   author={Sun, Shan Li},
   title={Compact Toeplitz operators on the weighted Bergman spaces},
   language={Chinese, with English and Chinese summaries},
   journal={Acta Math. Sinica (Chin. Ser.)},
   volume={44},
   date={2001},
   number={2},
   pages={233--240}
}

\bib{Zhu}{book}{
   author={Zhu, Kehe},
   title={Spaces of holomorphic functions in the unit ball},
   series={Graduate Texts in Mathematics},
   volume={226},
   publisher={Springer-Verlag},
   place={New York},
   date={2005},
   pages={x+271}
}

\end{biblist}
\end{bibdiv}


\end{document}